\newtheorem{theorem}{Theorem}[section]
\newtheorem{lemma}[theorem]{Lemma}
\newtheorem{proposition}[theorem]{Proposition}
\newtheorem{remark}[theorem]{Remark}
\theoremstyle{definition}
\newtheorem{definition}[theorem]{Definition}
\title{Branching Laws of Generalized Verma Modules for Non-symmetric Polar Pairs}
\author{Haian HE}
\date{}
\begin{document}

\maketitle

\begin{abstract}
We give branching formulas from $so(7,\mathbb{C})$ to $\mathfrak{g}_2$ for generalized Verma modules attached to $\mathfrak{g}_2$-compatible parabolic subalgebras of $so(7,\mathbb{C})$, and branching formulas from $\mathfrak{g}_2$ to $sl(3,\mathbb{C})$ for generalized Verma modules attached to $sl(3,\mathbb{C})$-compatible parabolic subalgebras of $\mathfrak{g}_2$ respectively, under some assumptions on the parameters of generalized Verma modules.
\end{abstract}

\section{Introduction}

Branching law is one of the fundamental topics in representation theory. It gives multiplicities for a representation (or module) of a group (or an algebra) upon restriction to its subgroup (or subalgebra). In this paper, we partially study the branching problems for the Lie algebra pairs $(so(7,\mathbb{C}),\mathfrak{g}_2)$ and $(\mathfrak{g}_2,sl(3,\mathbb{C}))$.

The Lie algebra pairs $(so(7,\mathbb{C}),\mathfrak{g}_2)$ and $(\mathfrak{g}_2,sl(3,\mathbb{C}))$ are studied by many mathematicians, especially for $(so(7,\mathbb{C}),\mathfrak{g}_2)$. The following two points indicate that this two pairs are special.

Firstly, although $(so(7,\mathbb{C}),\mathfrak{g}_2)$ and $(\mathfrak{g}_2,sl(3,\mathbb{C}))$ are not symmetric pairs, they behave like symmetric pairs. Concretely, let's consider the simply connected compact real Lie group pairs $(Spin(7),G_2)$ and $(G_2,SU(3))$. In [\textbf{HPTT}], the authors gave the definition of polar pair (Definition 3.1 [\textbf{HPTT}]) and irreducible polar pair (Definition 3.2 [\textbf{HPTT}]), and did a classification of irreducible polar pairs $(G,H)$. If $(G,H)$ is an irreducible polar pair with $G$ semi-simple and simply connected and $H$ connected, the $(G,H)$ is either a symmetric pair associated to some symmetric space of compact type or else it is isomorphic to $(Spin(7),G_2)$ or $(G_2,SU(3))$ (Theorem 3.12 [\textbf{HPTT}]). One can check that both $Spin(7)/G_2\cong S^7$ and $G_2/SU(3)\cong S^6$ are symmetric spaces, but neither of $(Spin(7),G_2)$ and $(G_2,SU(3))$ is a symmetric pair. The polar pairs of compact semi-simple Lie algebras were discussed in [\textbf{H}] where J.S.Huang studied harmonic analysis on the compact polar homogeneous spaces. Besides symmetric pairs, there are only two compact polar pairs of Lie algebras: $(so(7),\mathfrak{g}_2)$ and $(\mathfrak{g}_2,su(3))$. The complexified types of them are $(so(7,\mathbb{C}),\mathfrak{g}_2)$ and $(\mathfrak{g}_2,sl(3,\mathbb{C}))$, which we call non-symmetric polar pairs without confusion.

Secondly, there exists a class of pairs which are called weakly symmetric pairs. The definition of weakly symmetric pair can be found in Definition 12.2.3 [\textbf{W}]. It can be showed that all the symmetric pairs are weakly symmetric pairs, and all the weakly symmetric pairs are Gelfand pairs. The complex reductive non-symmetric weakly symmetric pairs $(G,H)$ with $G$ simple were completely listed in Table 12.7.2 [\textbf{W}]. The twelve pairs can be divided into three classes. The first class consists of $(G,H)$ such that there exists a symmetric subgroup $K$ of $G$ satisfying $H=[K,K]$, the commutator group of $H$. In this class, branching law for each $(G,H)$ is close to that for the symmetric pair $(G,K)$ to some extent. The second class consists of $(G,H)$ such that there exists a symmetric subgroup $K$ of $G$, and $(K/N,H/N)$ forms a symmetric pair as well where $N$ is a normal subgroup of $K$ and $H$. In this case, branching law for $(G,H)$ can be regarded as two steps: from $G$ to $K$, and from $K$ to $H$, both of which are related to branching laws for symmetric pairs. The remaining three pairs are included in the third class. They are $(Spin(8,\mathbb{C}),G_2)$, $(Spin(7,\mathbb{C}),G_2)$ and $(G_2,SL(3,\mathbb{C}))$. As to $(Spin(8,\mathbb{C}),G_2)$, the branching law can be processed in two stages: $(Spin(8,\mathbb{C}),Spin(7,\mathbb{C}))$ and $(Spin(7,\mathbb{C}),G_2)$, where the former one is a symmetric pair. Hence, the only two pairs whose branching laws seem not related to symmetric pairs among weakly symmetric pairs are $(Spin(7,\mathbb{C}),G_2)$ and $(G_2,SL(3,\mathbb{C}))$.

T.Kobayashi studied the branching problems for symmetric pairs (Section 4 [\textbf{Ko}]). For a complex semi-simple symmetric pair $(\mathfrak{g},\mathfrak{g}')$, when a parabolic subalgebra $\mathfrak{p}$ of $\mathfrak{g}$ is $\mathfrak{g}'$-compatible and has abelian nilpotent radical, then a generalized Verma module associated to $\mathfrak{p}$ with a sufficient negative highest weight decomposes as multiplicity-free direct sum of generalized Verma modules of $\mathfrak{g}'$. Moreover, he gave the branching formulas of Verma modules for three classical symmetric pairs. We shall introduce his key methods in details later. In this paper, we shan't discuss a more general result of branching formulas for symmetric pairs; instead, we shall focus on the two non-symmetric polar pairs $(so(7,\mathbb{C}),\mathfrak{g}_2)$ and $(\mathfrak{g}_2,sl(3,\mathbb{C}))$.

In $[\textbf{MS1}]$, T.Milev and P.Somberg studied the branching law of generalized Verma modules for $(so(7,\mathbb{C}),\mathfrak{g}_2)$, and gave the lists of $\bar{\mathfrak{b}}$-singular vectors. Actually, they are just the highest weight vectors of generalized Verma modules as $\mathfrak{g}_2$-modules. Moreover, an useful method called F-method was introduced in $[\textbf{MS2}]$ in order to find $\tilde{L}'$-singular vectors. All these vectors give much information for branching formulas. We shall see that these results will be contained in our branching formulas below.

W.M.McGovern gave an explicit branching formula for $(so(7,\mathbb{C}),\mathfrak{g}_2)$ of finite dimensional modules (Theorem 3.4 [\textbf{M}]). In fact, branching formulas of finite dimensional modules can always be deduced by Kostant's Branching Theorem (Theorem 8.2.1 [\textbf{GW}] or Theorem 9.20 [\textbf{K}]). But we know little about those for infinite dimensional modules. We are not able to deal with all of infinite dimensional modules; instead, we shall only handle generalized Verma modules. It is well known that generalized Verma modules belong to the generalized BGG category $\mathcal{O}^\mathfrak{p}$. Here, we require $\mathfrak{p}$ to be a $\mathfrak{g}_2$(respectively, $sl(3,\mathbb{C})$)-compatible parabolic subalgebra of $so(7,\mathbb{C})$(respectively, $\mathfrak{g}_2$) (Definition 3.7 [\textbf{Ko}]), the reason for which we shall explain in Section 3. Moreover, we have to require the parameter of each parabolic Verma module to be ``generic'' so that the generalized Verma module is simple. Under these requirements, any sub-quotient occurring in the restriction of each generalized Verma module lies in $\mathcal{O}^{\mathfrak{p}'}$ where $\mathfrak{p}'$ is the intersection of $\mathfrak{p}$ with $\mathfrak{g}_2$(respectively, $sl(3,\mathbb{C})$) (Lemma 3.4 and Proposition 3.8 [\textbf{Ko}]).

As to the basic structures of the complex Lie algebras of $so(7,\mathbb{C})$ and $sl(3,\mathbb{C})$, it is easy to understand them because both of them are of classical types. For the exceptional Lie algebra $\mathfrak{g}_2$, its structure is showed in detail in Chapter 22 [\textbf{FH}] and Section 19.3 [\textbf{Hu1}]. T.Levasseur and S.P.Smith described the details of the inclusion $\mathfrak{g}_2\subseteq so(7,\mathbb{C})$ (Section 2 [\textbf{LS}]). We shall restate the embedding in Section 2 and use this construction throughout this paper.

In Section 3, we shall first briefly introduce the concept of discretely decomposable representation given by T.Kobayashi, which will explain why we require compatibility of parabolic subalgebras. Then we shall find all $\mathfrak{g}_2$-compatible standard parabolic subalgebras of $so(7,\mathbb{C})$. Our main part will begin from Section 4. In Section 4, We shall first recall T.Kobayashi's method which will indicate a way for decomposition in Grothendieck group level, and making use of which we shall compute the decompositions of generalized Verma modules attached to compatible parabolic subalgebras in the Grothendieck groups of $\mathcal{O}^{\mathfrak{p}'}$ for $(so(7,\mathbb{C}),\mathfrak{g}_2)$. In Section 5, we'll show that under some special assumptions on the parameters, the decompositions in the Grothendieck groups of $\mathcal{O}^{\mathfrak{p}'}$ given in Section 4 are just $\mathfrak{g}_2$-module decompositions. For the pair $(\mathfrak{g}_2,sl(3,\mathbb{C}))$, the method is parallel but the computation is much easier than that for $(so(7,\mathbb{C}),\mathfrak{g}_2)$. We shall only give the parallel results for it without computation in Section 6.

Throughout the paper, we shall use the following notations.

Let $\mathbb{N}$, $\mathbb{Z}^+$, $\mathbb{Z}$ and $\mathbb{C}$ denote the set of nonnegative integers, positive integers, integers and complex numbers respectively. Let $\mathfrak{g}$ be a complex reductive Lie algebra. Then denote $\mathfrak{h}_\mathfrak{g}$ to be a Cartan subalgebra of $\mathfrak{g}$ with its dual space $\mathfrak{h}_\mathfrak{g}^*$, and denote $\Phi(\mathfrak{g})$, $\Phi^+(\mathfrak{g})$, $\Delta(\mathfrak{g})$ and $\Lambda^+(\mathfrak{g})$ to be root system, positive root system, simple root system and dominant integral weight system of $\mathfrak{g}$ respectively. If $\alpha$ is a root, let $H_\alpha$ be the corresponding co-root. And let $\rho(\mathfrak{g})$ denote half the sum of $\Phi^+(\mathfrak{g})$. We denote $W_\mathfrak{g}$ to be the Weyl group of $\mathfrak{g}$ generated by the reflections $s_\alpha$ for $\alpha\in\Phi(\mathfrak{g})$. We always denote $\mathfrak{p}$ to be a parabolic subalgebra of $\mathfrak{g}$, and let $\mathfrak{p}=\mathfrak{l}+\mathfrak{u}_+$ be a Levi decomposition with $\mathfrak{l}$ reductive subalgebra and $\mathfrak{u}_+$ nilpotent radical. Because a standard parabolic subalgebra is determined by a subset of the simple system and a Borel subalgebra which it contains (Lemma 3.8.1(ii) [\textbf{CM}] or Proposition 5.90 [\textbf{K}]), we denote $\mathfrak{p}_\Pi$ to be the standard parabolic subalgebra corresponding to the subset $\Pi\subseteq\Delta(\mathfrak{g})$; namely, $\mathfrak{p}_\Pi=\mathfrak{h}_\mathfrak{g}+\displaystyle{\sum_{\alpha\in\Phi^+(\mathfrak{g})}}\mathfrak{g}_\alpha+ \displaystyle{\sum_{\alpha\in\mathbb{N}\Pi\cap\Phi^+(\mathfrak{g})}}\mathfrak{g}_{-\alpha}$ where $\mathfrak{g}_\alpha$ is the root space of the root $\alpha$. An extreme case of parabolic subalgebra is Borel subalgebra which we denote by $\mathfrak{b}_\mathfrak{g}$. Moreover, denote $U(\mathfrak{g})$ to be the universal enveloping algebra of $\mathfrak{g}$. If $S$ is a set, we denote  $\mathrm{Card}S$ to be the cardinality of $S$.

\section{Embedding of $\mathfrak{g}_2$ in $so(7,\mathbb{C})$}

We briefly recall the embedding of $\mathfrak{g}_2$ in $so(7,\mathbb{C})$ described by T.Levasseur and S.P.Smith (Section 5 [\textbf{LS}]) in this section.

We may realize $so(7,\mathbb{C})$ as \[\{\left(\begin{array}{cccc}A&B&u\\C&-A^t&v\\-v^t&-u^t&0\end{array}\right)\mid u,v\in\mathbb{C}^3,A\in gl(3,\mathbb{C}),B,D skew-symmetric\},\] the set of skew-adjoint matrices relative to the quadratic form $2(z_1z_4+z_2z_5+z_3z_6)+z_7^2$ on $\mathbb{C}^7$. Let $E_{ij}\in gl(7,\mathbb{C})$ for $1\leq i,j\leq7$ be matrix such that $(i,j)$-entry is 1 and other entries are all 0. Define a Cartan subalgebra $\mathfrak{h}_{so(7,\mathbb{C})}$ of $so(7,\mathbb{C})$ with basis $\{H_i=E_{ii}-E_{i+3,i+3}\mid1\leq i\leq3\}$. Take a dual basis to $H_i$ in $\mathfrak{h}_{so(7,\mathbb{C})}^*$, $\{\varepsilon_i\mid1\leq i\leq3\}$. A root system of $so(7,\mathbb{C})$ is given by $\Phi(so(7,\mathbb{C}))=\{\pm\varepsilon_i\pm\varepsilon_j\mid1\leq i<j\leq3\}\cup\{\pm\varepsilon_k\mid1\leq k\leq3\}$ and a system of simple roots given by $\Delta(so(7,\mathbb{C}))=\{\varepsilon_1-\varepsilon_2,\varepsilon_2-\varepsilon_3,\varepsilon_3\}$.

The subalgebra $\mathfrak{g}_2$ is given by the Chevalley basis in terms of that of $so(7,\mathbb{C})$ below.
\begin{eqnarray*}
\begin{array}{rclcrcl}
X_{\alpha_2}&=&X_{\varepsilon_2-\varepsilon_3},\\
X_{3\alpha_1+\alpha_2}&=&-X_{\varepsilon_1+\varepsilon_3},\\
X_{3\alpha_1+2\alpha_2}&=&-X_{\varepsilon_1+\varepsilon_2},\\
X_{\alpha_1}&=&X_{\varepsilon_3}+X_{\varepsilon_1-\varepsilon_2},\\
X_{\alpha_1+\alpha_2}&=&X_{\varepsilon_2}-X_{\varepsilon_1-\varepsilon_3},\\
X_{2\alpha_1+\alpha_2}&=&-X_{\varepsilon_1}-X_{\varepsilon_2+\varepsilon_3},\\
X_{-\alpha_2}&=&X_{\varepsilon_3-\varepsilon_2},\\
X_{-3\alpha_1-\alpha_2}&=&X_{-\varepsilon_1-\varepsilon_3},\\
X_{-3\alpha_1-2\alpha_2}&=&X_{-\varepsilon_1-\varepsilon_2},\\
X_{-\alpha_1}&=&X_{-\varepsilon_3}+X_{\varepsilon_2-\varepsilon_1},\\
X_{-\alpha_1-\alpha_2}&=&X_{-\varepsilon_2}-X_{\varepsilon_3-\varepsilon_1},\\
X_{-2\alpha_1-\alpha_2}&=&-X_{-\varepsilon_1}-X_{-\varepsilon_2-\varepsilon_3},\\
H_{\alpha_1}&=&H_{\varepsilon_1-\varepsilon_2}+H_{\varepsilon_3}=H_1-H_2+2H_3,\\
H_{\alpha_2}&=&H_{\varepsilon_2-\varepsilon_3}=H_2-H_3.
\end{array}
\end{eqnarray*}

A Cartan subalgebra of $\mathfrak{g}_2$ is complex linearly spanned by $\{H_{\alpha_1},H_{\alpha_2}\}$. The inclusion $\mathfrak{h}_{\mathfrak{g}_2}\subseteq\mathfrak{h}_{so(7,\mathbb{C})}$ induces a restriction map $\mathrm{Res}_{\mathfrak{g}_2}^{so(7,\mathbb{C})}:\mathfrak{h}_{so(7,\mathbb{C})}^*\to\mathfrak{h}_{\mathfrak{g}_2}^*$. Fix simple system $\Delta(\mathfrak{g}_2)=\{\alpha_1,\alpha_2\}$ of $\mathfrak{g}_2$ with $\alpha_1=\mathrm{Res}_{\mathfrak{g}_2}^{so(7,\mathbb{C})}(\varepsilon_1-\varepsilon_2)=\mathrm{Res}_{\mathfrak{g_2}}^{so(7,\mathbb{C})}(\varepsilon_3)$ and $\alpha_2=\mathrm{Res}_{\mathfrak{g}_2}^{so(7,\mathbb{C})}(\varepsilon_2-\varepsilon_3)$.

We may write $so(7,\mathbb{C})=\mathfrak{g}_2\oplus U$, where $U$ is the orthogonal complement to $\mathfrak{g}_2$ with respect to Killing form of $so(7,\mathbb{C})$. Then $\dim U=7$, and since $[\mathfrak{g}_2, U]\neq0$, the only possibility is that $U$ is isomorphic to the unique 7-dimensional simple module of $\mathfrak{g}_2$. It is well known that $U=\mathrm{Span}_\mathbb{C}\{v_{\pm(2\alpha_1+\alpha_2)}, v_{\pm(\alpha_1+\alpha_2)}, v_{\pm\alpha_1}, v_0\}$, where $v_\beta$ for $\beta\in\Phi(\mathfrak{g}_2)$ are weight vectors in the corresponding weight spaces and $v_0$ is given by $H_{\varepsilon_2+\varepsilon_3}-\frac{1}{2}H_{\varepsilon_1}=H_2+H_3$, which satisfy the following relations:
\begin{eqnarray*}
\begin{array}{rclcrcl}
v_{\alpha_1+\alpha_2}&=&X_{-\alpha_1}v_{2\alpha_1+\alpha_2},\\
v_{\alpha_1}&=&X_{-\alpha_2}v_{\alpha_1+\alpha_2},\\
v_0&=&X_{-\alpha_1}v_{\alpha_1},\\
v_{-\alpha_1}&=&X_{-\alpha_1}v_0,\\
v_{-\alpha_1-\alpha_2}&=&X_{-\alpha_2}v_{-\alpha_1},\\
v_{-2\alpha_1-\alpha_2}&=&X_{-\alpha_1}v_{-2\alpha_1-\alpha_2}.
\end{array}
\end{eqnarray*}

\section{Compatible Parabolic Subalgebra}

In this section, our aim is to find all $\mathfrak{g}_2$-compatible standard parabolic subalgebras of $so(7,\mathbb{C})$. We need to give a general definition of compatible parabolic subalgebra. However, we should explain why we require $\mathfrak{p}$ to be a compatible parabolic subalgebra. Hence, we introduce T.Kobayashi's work at first, which will answer this question.

\begin{center}
\textbf{3.1 Discretely decomposable branching laws}
\end{center}

Suppose that $\mathfrak{g}$ is a complex reductive Lie algebra.
\begin{definition}
We say a $\mathfrak{g}$-module $X$ is $\emph{discretely decomposable}$ if there is an increasing filtration $\{X_m\}$ of $\mathfrak{g}$-submodules of finite length such that $X=\bigcup_{m=0}^{+\infty}X_m$. Further, we say $X$ is discretely decomposable in the category $\mathcal{O}^\mathfrak{p}$ if all $X_m$ can be taken from $\mathcal{O}^\mathfrak{p}$.
\end{definition}
Suppose that $\mathfrak{g}'\subseteq\mathfrak{g}$ is a reductive subalgebra, and $\mathfrak{p}'$ its parabolic subalgebra.
\begin{lemma}(Lemma 3.4 [\textbf{Ko}])
Let $X$ be a simple $g$-module. Then the restriction $\mathrm{Res}_{\mathfrak{g}'}^\mathfrak{g}X$ is discretely decomposable in the category $\mathcal{O}^{\mathfrak{p}'}$ if and only if there exists a $\mathfrak{g}'$-module $Y\in\mathcal{O}^{\mathfrak{p}'}$ such that $\hom_{\mathfrak{g}'}(Y,\mathrm{Res}_{\mathfrak{g}'}^\mathfrak{g}X)\neq\{0\}$. In this case, any sub-quotient occurring in the $\mathfrak{g}'$-module $\mathrm{Res}_{\mathfrak{g}'}^\mathfrak{g}X$ lies in $\mathcal{O}^{\mathfrak{p}'}$.
\end{lemma}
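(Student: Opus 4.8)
The plan is to establish the two implications separately and then deduce the final clause from the filtration constructed in the harder direction. The ``only if'' direction is quick: suppose $\mathrm{Res}_{\mathfrak{g}'}^{\mathfrak{g}}X$ is discretely decomposable in $\mathcal{O}^{\mathfrak{p}'}$, with a witnessing increasing exhaustive filtration $\{X_m\}$ of $\mathfrak{g}'$-submodules lying in $\mathcal{O}^{\mathfrak{p}'}$ and of finite length. Since $X$ is simple it is nonzero, so after discarding an initial segment we may assume $X_0\neq\{0\}$; being a nonzero finite-length object of $\mathcal{O}^{\mathfrak{p}'}$ it has an irreducible submodule $Y$, which again lies in $\mathcal{O}^{\mathfrak{p}'}$, and the composite $Y\hookrightarrow X_0\hookrightarrow\mathrm{Res}_{\mathfrak{g}'}^{\mathfrak{g}}X$ is a nonzero element of $\hom_{\mathfrak{g}'}(Y,\mathrm{Res}_{\mathfrak{g}'}^{\mathfrak{g}}X)$.

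For the ``if'' direction, fix $Y\in\mathcal{O}^{\mathfrak{p}'}$ and a nonzero $\mathfrak{g}'$-map $\varphi\colon Y\to\mathrm{Res}_{\mathfrak{g}'}^{\mathfrak{g}}X$, and set $M:=\varphi(Y)$. Then $M$ is a nonzero $\mathfrak{g}'$-submodule of $X$ and, being a quotient of $Y$, lies in $\mathcal{O}^{\mathfrak{p}'}$; in particular $M$ is finitely generated over $U(\mathfrak{g}')$, and $X=U(\mathfrak{g})M$ because $X$ is simple. Since $\mathfrak{g}'$ is reductive in $\mathfrak{g}$, choose a $\mathfrak{g}'$-stable complement $\mathfrak{m}$ with $\mathfrak{g}=\mathfrak{g}'\oplus\mathfrak{m}$, and filter $U(\mathfrak{g})$ by declaring $F_n$ to be the span of all products having at most $n$ factors from $\mathfrak{m}$ and arbitrarily many from $\mathfrak{g}'$; because $[\mathfrak{g}',\mathfrak{m}]\subseteq\mathfrak{m}$ one has $F_n=\sum_{k\le n}U(\mathfrak{g}')\,\mathfrak{m}^k=\sum_{k\le n}\mathfrak{m}^k\,U(\mathfrak{g}')$, each $F_n$ is an $\mathrm{ad}(\mathfrak{g}')$-stable $U(\mathfrak{g}')$-sub-bimodule, $\bigcup_nF_n=U(\mathfrak{g})$, and PBW gives $F_n/F_{n-1}\cong U(\mathfrak{g}')\otimes_{\mathbb{C}}\mathrm{Sym}^n(\mathfrak{m})$. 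Put $X_n:=F_nM$. Then $X_0=M$, $\{X_n\}$ is an increasing chain of $\mathfrak{g}'$-submodules, and $\bigcup_nX_n=U(\mathfrak{g})M=X$, so it suffices to show $X_n\in\mathcal{O}^{\mathfrak{p}'}$ for all $n$.

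I would argue this by induction on $n$; since $\mathcal{O}^{\mathfrak{p}'}$ is closed under extensions, the inductive step reduces to showing $X_n/X_{n-1}\in\mathcal{O}^{\mathfrak{p}'}$. The crux is the claim that
\[
\mathfrak{m}^{\otimes n}\otimes_{\mathbb{C}}M\longrightarrow X_n/X_{n-1},\qquad m_1\otimes\cdots\otimes m_n\otimes v\longmapsto m_1\cdots m_nv\bmod X_{n-1},
\]
is a well-defined surjection of $\mathfrak{g}'$-modules for the diagonal action ($\mathfrak{g}'$ acting on $\mathfrak{m}$ adjointly and on $M$ by restriction) which, because swapping two adjacent factors $m_i,m_{i+1}$ replaces the product by one whose bracket term $[m_i,m_{i+1}]\in\mathfrak{g}'\oplus\mathfrak{m}$ drops the $\mathfrak{m}$-degree by at least one after its $\mathfrak{g}'$-component is pushed onto $M$, descends to a $\mathfrak{g}'$-surjection $\mathrm{Sym}^n(\mathfrak{m})\otimes_{\mathbb{C}}M\twoheadrightarrow X_n/X_{n-1}$. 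As $\mathrm{Sym}^n(\mathfrak{m})$ is finite-dimensional with $\mathfrak{h}_{\mathfrak{g}'}$ acting semisimply, and $\mathcal{O}^{\mathfrak{p}'}$ is closed under tensoring with such modules and under quotients, $X_n/X_{n-1}\in\mathcal{O}^{\mathfrak{p}'}$, finishing the induction; thus $\mathrm{Res}_{\mathfrak{g}'}^{\mathfrak{g}}X=\bigcup_nX_n$ is discretely decomposable in $\mathcal{O}^{\mathfrak{p}'}$. For the final assertion, any irreducible subquotient $S$ of $\mathrm{Res}_{\mathfrak{g}'}^{\mathfrak{g}}X$ already occurs as an irreducible subquotient of some $X_m$ of a witnessing filtration (lift a generator of $S$ to the union and it lands in some $X_m$), and $\mathcal{O}^{\mathfrak{p}'}$ is closed under subquotients, so $S\in\mathcal{O}^{\mathfrak{p}'}$; since $\mathcal{O}^{\mathfrak{p}'}$ is a Serre subcategory of $\mathcal{O}$, the same holds for every finite-length subquotient.

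I expect the one genuinely delicate point to be the verification in the crux claim: checking that, after reordering the $m_i$ and moving the $\mathfrak{g}'$-parts of all the brackets $[\mathfrak{m},\mathfrak{m}]\subseteq\mathfrak{g}'\oplus\mathfrak{m}$ past the surviving $\mathfrak{m}$-factors, every correction term really lies in $F_{n-1}M=X_{n-1}$, so that the map above both factors through $\mathrm{Sym}^n(\mathfrak{m})$ and is $\mathfrak{g}'$-equivariant. This is precisely where reductivity of $\mathfrak{g}'$ in $\mathfrak{g}$ — the existence of the $\mathfrak{g}'$-stable complement $\mathfrak{m}$ — is essential; the remaining ingredients are the standard finiteness and closure properties of parabolic category $\mathcal{O}$ together with routine PBW bookkeeping.
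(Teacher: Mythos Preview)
The paper does not give its own proof of this lemma; it is merely quoted from Kobayashi's paper [\textbf{Ko}, Lemma~3.4] and stated without argument. There is therefore no ``paper's proof'' to compare your proposal against.

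That said, your argument is correct and is essentially the standard one (and the one Kobayashi himself gives). The ``only if'' direction is immediate, and for the ``if'' direction you correctly exploit simplicity of $X$ to write $X=U(\mathfrak{g})M$ for a seed $M\in\mathcal{O}^{\mathfrak{p}'}$, filter by $\mathfrak{m}$-degree using the $\mathfrak{g}'$-stable splitting $\mathfrak{g}=\mathfrak{g}'\oplus\mathfrak{m}$, and identify each graded piece as a quotient of $\mathrm{Sym}^n(\mathfrak{m})\otimes M$. The bookkeeping you flag as the ``delicate point'' is indeed routine: the commutator $[m_i,m_{i+1}]$ has $n{-}1$ factors from $\mathfrak{m}$ once split as $\mathfrak{g}'\oplus\mathfrak{m}$, and the $\mathfrak{g}'$-component, though sandwiched between $\mathfrak{m}$-factors, already lies in $F_{n-1}$ by your very definition of the filtration (arbitrarily many $\mathfrak{g}'$-factors allowed), so no further commuting is needed there. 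One small point worth making explicit is that $\mathrm{Sym}^n(\mathfrak{m})$ lies in $\mathcal{O}^{\mathfrak{p}'}$ (not merely that $\mathfrak{h}_{\mathfrak{g}'}$ acts semisimply): this uses that ``$\mathfrak{g}'$ reductive in $\mathfrak{g}$'' means $\mathrm{ad}\,\mathfrak{g}'$ acts semisimply on $\mathfrak{g}$, so $\mathfrak{l}'$ acts semisimply on $\mathrm{Sym}^n(\mathfrak{m})$, which is what closure of $\mathcal{O}^{\mathfrak{p}'}$ under tensoring with finite-dimensional modules requires.
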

Let $G=\mathrm{Int}(\mathfrak{g})$, $P$ the parabolic subgroup of $G$ with Lie algebra $\mathfrak{p}$ as before, and $G'$ a reductive subgroup with Lie algebra $\mathfrak{g}'$.
\begin{proposition}(Proposition 3.5 [\textbf{Ko}])
If $G'P$ is closed in $G$ then the restriction $\mathrm{Res}_{\mathfrak{g}'}^\mathfrak{g}X$ is discretely decomposable for any simple $\mathfrak{g}$-module $X$ in $\mathcal{O}^\mathfrak{p}$.
\end{proposition}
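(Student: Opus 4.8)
The plan is to deduce the proposition from Lemma~3.4. Set $\mathfrak{p}':=\mathfrak{p}\cap\mathfrak{g}'$. Since $X$ is simple, Lemma~3.4 guarantees that $\mathrm{Res}_{\mathfrak{g}'}^\mathfrak{g}X$ is discretely decomposable in $\mathcal{O}^{\mathfrak{p}'}$ as soon as we exhibit one nonzero $\mathfrak{g}'$-module $Y\in\mathcal{O}^{\mathfrak{p}'}$ with $\hom_{\mathfrak{g}'}(Y,\mathrm{Res}_{\mathfrak{g}'}^\mathfrak{g}X)\neq\{0\}$. The role of the hypothesis ``$G'P$ closed in $G$'' is precisely to make $\mathfrak{p}'$ usable here: the coset space $G'P/P$ is exactly the $G'$-orbit of the base point of the projective variety $G/P$, so if it is closed it is a projective homogeneous space for $G'$, whence the isotropy group $G'\cap P$ is parabolic in $G'$ by Borel's fixed point theorem and $\mathfrak{p}'=\mathfrak{p}\cap\mathfrak{g}'$ is a parabolic subalgebra of $\mathfrak{g}'$. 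After a short further argument — which is automatic when $\mathfrak{p}$ is $\mathfrak{g}'$-compatible, the setting of every application in this paper — one may moreover choose a Levi decomposition $\mathfrak{p}'=\mathfrak{l}'+\mathfrak{u}_+'$ and a Cartan subalgebra $\mathfrak{h}_{\mathfrak{g}'}\subseteq\mathfrak{l}'$ with $\mathfrak{u}_+'\subseteq\mathfrak{u}_+$ and $\mathfrak{h}_{\mathfrak{g}'}\subseteq\mathfrak{h}_\mathfrak{g}$.

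To construct $Y$: as a simple object of $\mathcal{O}^\mathfrak{p}$, $X$ is a highest weight module, so it contains a highest weight vector $v$, annihilated by $\mathfrak{u}_+$, whose $U(\mathfrak{l})$-span $F:=U(\mathfrak{l})v$ is a finite-dimensional $\mathfrak{p}$-submodule of $X$ on which $\mathfrak{u}_+$ acts by zero (so it is really an $\mathfrak{l}$-module). Because $\mathfrak{p}'\subseteq\mathfrak{p}$, the subspace $F$ is a finite-dimensional $\mathfrak{p}'$-submodule of $\mathrm{Res}_{\mathfrak{g}'}^\mathfrak{g}X$ on which $\mathfrak{u}_+'\subseteq\mathfrak{u}_+$ acts by zero and $\mathfrak{h}_{\mathfrak{g}'}\subseteq\mathfrak{h}_\mathfrak{g}$ acts semisimply; hence the parabolically induced module $Y:=U(\mathfrak{g}')\otimes_{U(\mathfrak{p}')}F$ lies in $\mathcal{O}^{\mathfrak{p}'}$ (it is a generalized Verma module of $\mathfrak{g}'$, up to a finite direct sum). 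Moreover the multiplication map $Y\to\mathrm{Res}_{\mathfrak{g}'}^\mathfrak{g}X$, $u\otimes w\mapsto u\cdot w$, is a $\mathfrak{g}'$-homomorphism carrying $1\otimes v$ to $v\neq0$, hence nonzero. Lemma~3.4 then gives that $\mathrm{Res}_{\mathfrak{g}'}^\mathfrak{g}X$ is discretely decomposable in $\mathcal{O}^{\mathfrak{p}'}$.

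The only non-formal point, and the one I expect to be the main obstacle, is the structural claim of the first paragraph. The closed orbit lemma yields at once that $\mathfrak{p}\cap\mathfrak{g}'$ is parabolic, but strengthening this so that its nilradical sits inside $\mathfrak{u}_+$ and its Levi factor contains a Cartan subalgebra of $\mathfrak{g}'$ lying in $\mathfrak{h}_\mathfrak{g}$ — equivalently, producing a grading element of $\mathfrak{p}$ inside $\mathfrak{g}'$, i.e. the $\mathfrak{g}'$-compatibility of $\mathfrak{p}$ — takes genuine work and can fail for general closed $G'P$. In that generality one argues instead geometrically, via Beilinson--Bernstein localization: $\mathcal{O}^\mathfrak{p}$ is realized by regular holonomic $\mathcal{D}$-modules on $G/P$ smooth along the Schubert stratification, closedness of the $G'$-orbit makes the associated variety of $X$ ``clean'' with respect to $\mathfrak{g}'$, and this is exactly Kobayashi's criterion ensuring that $\mathrm{Res}_{\mathfrak{g}'}^\mathfrak{g}X$ is a union of finite-length objects of $\mathcal{O}^{\mathfrak{p}'}$. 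For the pairs $(so(7,\mathbb{C}),\mathfrak{g}_2)$ and $(\mathfrak{g}_2,sl(3,\mathbb{C}))$ studied below all relevant parabolics are taken $\mathfrak{g}'$-compatible, so the elementary construction of the middle paragraph suffices for our purposes.
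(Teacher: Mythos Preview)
The paper does not supply its own proof of this proposition: it is stated as a quotation from Kobayashi [\textbf{Ko}, Proposition~3.5], with no argument given. There is therefore no proof in the paper to compare your attempt against.

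On the substance of your sketch: the middle paragraph --- taking the finite-dimensional $\mathfrak{p}$-module $F$ generated by a highest weight vector of $X$, inducing to $Y=U(\mathfrak{g}')\otimes_{U(\mathfrak{p}')}F\in\mathcal{O}^{\mathfrak{p}'}$, and applying the lemma quoted here as Lemma~3.2 --- is a correct and complete argument \emph{once} one knows $\mathfrak{u}_+'\subseteq\mathfrak{u}_+$ and $\mathfrak{h}_{\mathfrak{g}'}\subseteq\mathfrak{h}_\mathfrak{g}$. That is automatic when $\mathfrak{p}$ is $\mathfrak{g}'$-compatible, so what you have written is in effect a proof of the paper's Proposition~3.5 (Kobayashi's Proposition~3.8), which is the only form in which the result is ever used here. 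You are also right to flag that the passage from ``$G'P$ closed'' to those structural facts is the genuine content of the general statement, and your elementary construction does not close that gap; the Beilinson--Bernstein paragraph is a plausible gesture toward Kobayashi's actual argument but remains a pointer rather than a proof. In short: your write-up proves the compatible case cleanly and honestly identifies, without resolving, the extra work needed for the proposition as stated.
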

A semi-simple element $H\in\mathfrak{g}$ is said to be $\emph{hyperbolic}$ if the eigenvalues of $\mathrm{ad}(H)$ are all real. For a hyperbolic element $H$, we define the subalgebras
\begin{center}
$\mathfrak{u}_+\equiv\mathfrak{u}_+(H)$, $\mathfrak{l}\equiv\mathfrak{l}(H)$, $\mathfrak{u}_-\equiv\mathfrak{u}_-(H)$
\end{center}
as the sum of the eigenspaces with positive, zero, and negative eigenvalues, respectively. Then
\begin{center}
$\mathfrak{p}(H):=\mathfrak{l}(H)+\mathfrak{u}_+(H)$
\end{center}
is a Levi decomposition of a parabolic subalgebra of $\mathfrak{g}$.

Let $\mathfrak{g}'$ be a reductive subalgebra of $\mathfrak{g}$, and $\mathfrak{p}$ a parabolic subalgebra of $\mathfrak{g}$.
\begin{definition}
We say $\mathfrak{p}$ is $\mathfrak{g}'$-$\emph{compatible}$ if there exists a hyperbolic element $H$ of $\mathfrak{g}'$ such that $\mathfrak{p}=\mathfrak{p}(H)$.
\end{definition}
If $\mathfrak{p}=\mathfrak{l}+\mathfrak{u}_+$ is $\mathfrak{g}'$-compatible, then $\mathfrak{p}':=\mathfrak{p}\cap\mathfrak{g}'$ becomes a parabolic subalgebra of $\mathfrak{g}'$ with Levi decomposition
\begin{center}
$\mathfrak{p}'=\mathfrak{l}'+\mathfrak{u}'_+:=(\mathfrak{l}\cap\mathfrak{g}')+(\mathfrak{u}_+\cap\mathfrak{g}')$.
\end{center}
\begin{proposition}(Proposition 3.8 [\textbf{Ko}])
If $\mathfrak{p}$ is $\mathfrak{g}'$-compatible, then $G'P$ is closed in $G$ and the restriction $\mathrm{Res}_{\mathfrak{g}'}^\mathfrak{g}X$ is discretely decomposable for any simple object $X$ in $\mathcal{O}^\mathfrak{p}$.
\end{proposition}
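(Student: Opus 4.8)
The plan is to separate the two assertions. The discrete decomposability of $\mathrm{Res}_{\mathfrak{g}'}^{\mathfrak{g}}X$ for every simple $X\in\mathcal{O}^{\mathfrak{p}}$ is immediate from Proposition 3.5 once we know that $G'P$ is closed in $G$, so the whole task is to establish this closedness. I would do this by transferring the problem to the generalized flag variety $G/P$, which is a compact Hausdorff homogeneous space for $G=\mathrm{Int}(\mathfrak{g})$ (it is compact because $P$ is parabolic, and Hausdorff because $P$ is closed). Writing $\pi\colon G\to G/P$ for the natural projection and $o:=eP\in G/P$ for the base point, one has $\pi^{-1}(G'\cdot o)=G'P$; since $\pi$ is continuous, it therefore suffices to prove that the $G'$-orbit $G'\cdot o$ is closed in $G/P$.

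To prove that $G'\cdot o$ is closed I would exploit the $\mathfrak{g}'$-compatibility directly. By hypothesis $\mathfrak{p}=\mathfrak{p}(H)$ for a hyperbolic element $H\in\mathfrak{g}'$; viewing $H$ inside $\mathfrak{g}'$, it is again hyperbolic, its $\mathrm{ad}$-eigenspace decomposition of $\mathfrak{g}'$ is the restriction of that of $\mathfrak{g}$, and hence $\mathfrak{p}':=\mathfrak{p}\cap\mathfrak{g}'=\bigl(\mathfrak{l}\cap\mathfrak{g}'\bigr)+\bigl(\mathfrak{u}_+\cap\mathfrak{g}'\bigr)$ is a Levi decomposition of a parabolic subalgebra of $\mathfrak{g}'$ --- exactly the $\mathfrak{p}'$ recorded just before the proposition. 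Now set $P':=G'\cap P$. Since $P$ is closed in $G$, $P'$ is closed in $G'$, with Lie algebra $\mathfrak{g}'\cap\mathfrak{p}=\mathfrak{p}'$; being a closed subgroup of the reductive group $G'$ whose Lie algebra is parabolic, $P'$ is a parabolic subgroup of $G'$, so $G'/P'$ is a compact flag variety. The stabilizer of $o$ in $G'$ is precisely $G'\cap P=P'$, so the orbit map descends to a continuous $G'$-equivariant bijection $G'/P'\longrightarrow G'\cdot o\subseteq G/P$. A continuous bijection from a compact space onto a subspace of a Hausdorff space is a homeomorphism onto its image, and a compact subset of a Hausdorff space is closed; hence $G'\cdot o$ is closed in $G/P$, and therefore $G'P=\pi^{-1}(G'\cdot o)$ is closed in $G$. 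Applying Proposition 3.5 then yields the discrete decomposability statement, completing the argument.

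The step that I expect to carry the essential weight is showing that $\mathfrak{p}\cap\mathfrak{g}'$ is genuinely parabolic in $\mathfrak{g}'$, equivalently that $G'/(G'\cap P)$ is compact: this is precisely where $\mathfrak{g}'$-compatibility cannot be dropped, since for an arbitrary reductive subgroup $G'$ the intersection $\mathfrak{g}'\cap\mathfrak{p}$ need not be parabolic and the orbit $G'\cdot o$ need not be closed. The hyperbolic element $H$ is what makes this work: it produces the triangular decomposition $\mathfrak{g}'=\bigl(\mathfrak{u}_-\cap\mathfrak{g}'\bigr)\oplus\bigl(\mathfrak{l}\cap\mathfrak{g}'\bigr)\oplus\bigl(\mathfrak{u}_+\cap\mathfrak{g}'\bigr)$ in which $\mathfrak{u}_-\cap\mathfrak{g}'$ and $\mathfrak{u}_+\cap\mathfrak{g}'$ are opposite nilpotent radicals, so that $\mathfrak{p}'$ really is a parabolic subalgebra of $\mathfrak{g}'$. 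A secondary point to handle with a little care --- but not a serious difficulty --- is that $G'$ need not be closed in $G$; this is harmless because the closedness argument is carried out intrinsically on the compact $G'/P'$ and uses only that $P'$ is closed in $G'$ and that $G/P$ is Hausdorff.
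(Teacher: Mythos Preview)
The paper does not supply its own proof of this proposition: it is quoted verbatim from Kobayashi's article (as Proposition~3.8 in [\textbf{Ko}]) and stated without argument, serving only as background for the later branching computations. So there is no ``paper's proof'' to compare against.

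That said, your proposal is the standard argument and is correct. The essential content is exactly what you isolate: $\mathfrak{g}'$-compatibility guarantees that $\mathfrak{p}':=\mathfrak{p}\cap\mathfrak{g}'$ is a parabolic subalgebra of $\mathfrak{g}'$ (via the triangular decomposition coming from the hyperbolic element $H\in\mathfrak{g}'$), whence $G'/(G'\cap P)$ is compact and its image in the Hausdorff flag variety $G/P$ is closed; pulling back gives $G'P$ closed in $G$, and Proposition~3.3 (the paper's Proposition~3.5 in [\textbf{Ko}]) finishes. One cosmetic point: a closed subgroup of $G'$ whose Lie algebra is parabolic need not literally equal the parabolic subgroup, but it contains the identity component of one, so the quotient is still compact and your conclusion stands.
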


\begin{center}
\textbf{3.2 $\mathfrak{g}_2$-compatible parabolic subalgebra of $so(7,\mathbb{C})$}
\end{center}

We begin to find all the $\mathfrak{g}_2$-compatible standard parabolic subalgebras of $so(7,\mathbb{C})$.
\begin{lemma}
Let $\mathfrak{g}$ be a complex semi-simple Lie algebra with a reductive subalgebra $\mathfrak{g}'$. Suppose that $\mathfrak{p}$ is a parabolic subalgebra of $\mathfrak{g}$. Take an arbitrary maximal toral subalgebra $\mathfrak{h}'$ of $\mathfrak{p}':=\mathfrak{p}\cap\mathfrak{g}'$. Then $\mathfrak{p}$ is $\mathfrak{g}'$-compatible if and only if there exists a hyperbolic element $H$ in $\mathfrak{h}'$ such that $\mathfrak{p}=\mathfrak{p}(H)$.
\end{lemma}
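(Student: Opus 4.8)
The plan is to dispose of the easy implication at once and to reduce the other one to the conjugacy of maximal toral subalgebras inside the parabolic subalgebra $\mathfrak{p}'=\mathfrak{p}\cap\mathfrak{g}'$.

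The implication ``$\Leftarrow$'' needs no work: if $H\in\mathfrak{h}'$ is hyperbolic and $\mathfrak{p}=\mathfrak{p}(H)$, then $H\in\mathfrak{h}'\subseteq\mathfrak{p}'\subseteq\mathfrak{g}'$ is a hyperbolic element of $\mathfrak{g}'$ realizing $\mathfrak{p}$, which is precisely the defining condition for $\mathfrak{p}$ to be $\mathfrak{g}'$-compatible.

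For ``$\Rightarrow$'', assume $\mathfrak{p}=\mathfrak{p}(H_0)=\mathfrak{l}(H_0)+\mathfrak{u}_+(H_0)$ for some hyperbolic $H_0\in\mathfrak{g}'$. First I would observe that $H_0\in\mathfrak{p}'$: indeed $\mathrm{ad}(H_0)H_0=0$ places $H_0$ in the zero-eigenspace $\mathfrak{l}(H_0)\subseteq\mathfrak{p}$, and $H_0\in\mathfrak{g}'$, so $H_0\in\mathfrak{p}\cap\mathfrak{g}'=\mathfrak{p}'$. Since $\mathrm{ad}(H_0)$ is semisimple on $\mathfrak{g}'$ and $\mathfrak{p}'$ is $\mathrm{ad}(H_0)$-invariant, $H_0$ is a semisimple element of $\mathfrak{p}'$, so the line $\mathbb{C}H_0$ is a toral subalgebra of $\mathfrak{p}'$ and extends to a maximal toral subalgebra $\mathfrak{t}'$ of $\mathfrak{p}'$. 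Next I would invoke the structure theory of parabolic subalgebras: any two maximal toral subalgebras of $\mathfrak{p}'$ are conjugate under the group $P'_0:=\langle\exp(\mathrm{ad}\,x):x\in\mathfrak{p}'\rangle$ acting on $\mathfrak{g}$; this follows by combining the conjugacy of the Levi subalgebras of $\mathfrak{p}'$ under its unipotent radical with the conjugacy of Cartan subalgebras inside a reductive Levi (see [\textbf{CM}] or [\textbf{K}]). Choose $g\in P'_0$ with $\mathrm{Ad}(g)\mathfrak{t}'=\mathfrak{h}'$ and set $H:=\mathrm{Ad}(g)H_0\in\mathfrak{h}'$.

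What remains is routine bookkeeping with eigenspaces. From $\mathrm{ad}(H)=\mathrm{Ad}(g)\,\mathrm{ad}(H_0)\,\mathrm{Ad}(g)^{-1}$ one sees that $\mathrm{ad}(H)$ is conjugate to $\mathrm{ad}(H_0)$, hence semisimple with the same real spectrum, so $H$ is hyperbolic; moreover the $\lambda$-eigenspace of $\mathrm{ad}(H)$ is the image under $\mathrm{Ad}(g)$ of the $\lambda$-eigenspace of $\mathrm{ad}(H_0)$ for every $\lambda$. Collecting the eigenspaces with positive and with zero eigenvalue gives $\mathfrak{p}(H)=\mathrm{Ad}(g)\,\mathfrak{p}(H_0)=\mathrm{Ad}(g)\,\mathfrak{p}$, and finally $\mathrm{Ad}(g)\,\mathfrak{p}=\mathfrak{p}$ because, writing $g$ as a product of factors $\exp(\mathrm{ad}\,x_i)$ with $x_i\in\mathfrak{p}'\subseteq\mathfrak{p}$, each $\mathrm{ad}(x_i)$ preserves the subalgebra $\mathfrak{p}$ and hence so does $\exp(\mathrm{ad}\,x_i)$, and hence so does $g$. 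Thus $\mathfrak{p}(H)=\mathfrak{p}$ with $H\in\mathfrak{h}'$ hyperbolic. The only genuinely nontrivial ingredient is the conjugacy statement for maximal toral subalgebras of $\mathfrak{p}'$, and the point one must watch there is that $g$ has to be taken from the group attached to $\mathfrak{p}'$ itself: this is exactly what forces $\mathrm{Ad}(g)\,\mathfrak{p}=\mathfrak{p}$ in the final line, whereas an arbitrary element of $\mathrm{Int}(\mathfrak{g})$ carrying $\mathfrak{t}'$ to $\mathfrak{h}'$ would in general move $\mathfrak{p}$.
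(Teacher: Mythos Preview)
Your proof is correct and follows essentially the same route as the paper's: pick a hyperbolic $H_0\in\mathfrak{g}'$ with $\mathfrak{p}=\mathfrak{p}(H_0)$, note that $H_0$ lies in a maximal toral subalgebra $\mathfrak{t}'$ of $\mathfrak{p}'$, conjugate $\mathfrak{t}'$ onto $\mathfrak{h}'$ by some $g\in\mathrm{Int}(\mathfrak{p}')$, and take $H=\mathrm{Ad}(g)H_0$. You are more explicit than the paper about why $H_0\in\mathfrak{p}'$, why $\mathrm{Ad}(g)H_0$ remains hyperbolic, and why $\mathrm{Ad}(g)\mathfrak{p}=\mathfrak{p}$ (the paper simply asserts these), and you justify the conjugacy of maximal toral subalgebras via the Levi decomposition of $\mathfrak{p}'$, which is legitimate here since in the ``$\Rightarrow$'' direction compatibility guarantees that $\mathfrak{p}'$ is parabolic in $\mathfrak{g}'$.
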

\begin{proof}
The ``if'' part follows the definition immediately. Now if $\mathfrak{p}$ is a $\mathfrak{g}'$-compatible parabolic subalgebra, there is a hyperbolic element $H'\in\mathfrak{g}'$ such that $\mathfrak{p}=\mathfrak{p}(H')$. Moreover, $H'$ is a semi-simple element in $\mathfrak{p}$, so in $\mathfrak{p}'$, and hence there exists a maximal toral subalgebra $\mathfrak{t}'$ of $\mathfrak{p}'$ such that $H'\in\mathfrak{t}'$. Then $\mathfrak{h}'$ is conjugate to $\mathfrak{t}'$ by an element $x\in\mathrm{Int}(\mathfrak{p}')$, so $\mathrm{Ad}(x)H'\in\mathfrak{h}'$ which is also a hyperbolic element. Therefore, $\mathfrak{p}=\mathrm{Ad}(x)(\mathfrak{p})=\mathfrak{p}(\mathrm{Ad}(x)H')$. Let $H=\mathrm{Ad}(x)H'$, and then the ``only if'' part is proved.
\end{proof}
We know that $so(7,\mathbb{C})$ has eight standard parabolic subalgebras, which are corresponding to $\phi$, $\{\varepsilon_1-\varepsilon_2\}$, $\{\varepsilon_2-\varepsilon_3\}$, $\{\varepsilon_3\}$, $\{\varepsilon_1-\varepsilon_2, \varepsilon_2-\varepsilon_3\}$, $\{\varepsilon_1-\varepsilon_2, \varepsilon_3\}$, $\{\varepsilon_2-\varepsilon_3, \varepsilon_3\}$, and $\Delta(so(7,\mathbb{C}))$.
\begin{proposition}
There are four $\mathfrak{g}_2$-compatible standard parabolic subalgebras of $so(7,\mathbb{C})$: $\mathfrak{b}_{so(7,\mathbb{C})}$, $\mathfrak{p}_{\{\varepsilon_2-\varepsilon_3\}}$, $\mathfrak{p}_{\{\varepsilon_1-\varepsilon_2, \varepsilon_3\}}$ and $\mathfrak{p}_{\Delta(so(7,\mathbb{C}))}$.
\end{proposition}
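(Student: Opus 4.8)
The plan is to reduce the question, via Lemma 3.6, to a finite computation inside the Cartan subalgebra $\mathfrak{h}_{\mathfrak{g}_2}$. For every standard parabolic $\mathfrak{p}_\Pi$ of $so(7,\mathbb{C})$ one has $\mathfrak{h}_{\mathfrak{g}_2}\subseteq\mathfrak{h}_{so(7,\mathbb{C})}\subseteq\mathfrak{p}_\Pi$, and $\mathfrak{h}_{\mathfrak{g}_2}$, being a Cartan subalgebra of $\mathfrak{g}_2$, is a maximal toral subalgebra of $\mathfrak{p}':=\mathfrak{p}_\Pi\cap\mathfrak{g}_2$; so Lemma 3.6 tells us that $\mathfrak{p}_\Pi$ is $\mathfrak{g}_2$-compatible if and only if $\mathfrak{p}_\Pi=\mathfrak{p}(H)$ for some hyperbolic $H\in\mathfrak{h}_{\mathfrak{g}_2}$. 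Writing $H=aH_{\alpha_1}+bH_{\alpha_2}=aH_1+(b-a)H_2+(2a-b)H_3$, the eigenvalues of $\mathrm{ad}(H)$ on $so(7,\mathbb{C})$ are the numbers $\pm\varepsilon_i(H)$, $\pm\varepsilon_i(H)\pm\varepsilon_j(H)$ and $0$, so $H$ is hyperbolic precisely when $a,b\in\mathbb{R}$. I would then record two elementary facts: $\mathfrak{p}(H)$ contains $\mathfrak{b}_{so(7,\mathbb{C})}$ (equivalently, is one of the eight standard parabolics) if and only if $\beta(H)\geq0$ for every $\beta\in\Phi^+(so(7,\mathbb{C}))$, and in that case $\mathfrak{p}(H)=\mathfrak{p}_\Pi$ with $\Pi=\{\delta\in\Delta(so(7,\mathbb{C})):\delta(H)=0\}$, since for $H$ in the closed dominant chamber the only positive roots vanishing on $H$ are those lying in $\mathbb{N}\Pi$.

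The crucial structural input is supplied by Section 2: the restriction map $\mathrm{Res}_{\mathfrak{g}_2}^{so(7,\mathbb{C})}$ carries every root of $so(7,\mathbb{C})$ to a nonzero root of $\mathfrak{g}_2$, sends $\Phi^+(so(7,\mathbb{C}))$ into $\Phi^+(\mathfrak{g}_2)$, and in particular satisfies $\mathrm{Res}_{\mathfrak{g}_2}^{so(7,\mathbb{C})}(\varepsilon_1-\varepsilon_2)=\mathrm{Res}_{\mathfrak{g}_2}^{so(7,\mathbb{C})}(\varepsilon_3)=\alpha_1$ and $\mathrm{Res}_{\mathfrak{g}_2}^{so(7,\mathbb{C})}(\varepsilon_2-\varepsilon_3)=\alpha_2$. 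Consequently, for $H\in\mathfrak{h}_{\mathfrak{g}_2}$ the condition ``$\beta(H)\geq0$ for all $\beta\in\Phi^+(so(7,\mathbb{C}))$'' is equivalent to ``$\alpha_1(H)\geq0$ and $\alpha_2(H)\geq0$'', i.e. $H$ lies in the closed dominant Weyl chamber of $\mathfrak{g}_2$ inside $\mathfrak{h}_{\mathfrak{g}_2}$; and since $\varepsilon_1-\varepsilon_2$ and $\varepsilon_3$ have the same restriction, $\varepsilon_1-\varepsilon_2$ annihilates $H$ if and only if $\varepsilon_3$ does. Hence the subset $\Pi$ attached to a $\mathfrak{g}_2$-compatible standard parabolic must contain $\varepsilon_1-\varepsilon_2$ and $\varepsilon_3$ simultaneously or not at all, which already rules out $\mathfrak{p}_{\{\varepsilon_1-\varepsilon_2\}}$, $\mathfrak{p}_{\{\varepsilon_3\}}$, $\mathfrak{p}_{\{\varepsilon_1-\varepsilon_2,\varepsilon_2-\varepsilon_3\}}$ and $\mathfrak{p}_{\{\varepsilon_2-\varepsilon_3,\varepsilon_3\}}$, leaving exactly the four parabolics in the statement as candidates.

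It then remains to exhibit, for each of the four candidates, a hyperbolic $H\in\mathfrak{h}_{\mathfrak{g}_2}$ realizing it: $H=0$ gives $\mathfrak{p}(H)=so(7,\mathbb{C})=\mathfrak{p}_{\Delta(so(7,\mathbb{C}))}$; any $H$ with $\alpha_1(H)>0$ and $\alpha_2(H)>0$ gives $\mathfrak{p}(H)=\mathfrak{b}_{so(7,\mathbb{C})}$; any $H$ with $\alpha_1(H)>0$ and $\alpha_2(H)=0$ gives $\mathfrak{p}(H)=\mathfrak{p}_{\{\varepsilon_2-\varepsilon_3\}}$; and any $H$ with $\alpha_1(H)=0$ and $\alpha_2(H)>0$ gives $\mathfrak{p}(H)=\mathfrak{p}_{\{\varepsilon_1-\varepsilon_2,\varepsilon_3\}}$. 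The one point needing a small verification is in the last two cases, where one must check that the Levi factor of $\mathfrak{p}(H)$ inside $so(7,\mathbb{C})$ is no larger than that of the claimed standard parabolic; this follows because the only roots of $\mathfrak{g}_2$ that are integer multiples of $\alpha_1$ (respectively of $\alpha_2$) are $\pm\alpha_1$ (respectively $\pm\alpha_2$), so the set of roots of $so(7,\mathbb{C})$ vanishing on such an $H$ is exactly $\mathbb{Z}\{\varepsilon_1-\varepsilon_2,\varepsilon_3\}\cap\Phi(so(7,\mathbb{C}))$ (respectively $\mathbb{Z}\{\varepsilon_2-\varepsilon_3\}\cap\Phi(so(7,\mathbb{C}))$), using that $\varepsilon_1-\varepsilon_2\pm\varepsilon_3$ are not roots. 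This bookkeeping — making sure that descending to $\mathfrak{h}_{\mathfrak{g}_2}$ does not accidentally enlarge the Levi — is the only genuinely non-formal ingredient, and it is settled once and for all by the explicit embedding data of Section 2.
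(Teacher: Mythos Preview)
Your proof is correct and follows essentially the same approach as the paper's: both invoke Lemma~3.6 to reduce to hyperbolic $H\in\mathfrak{h}_{\mathfrak{g}_2}$, use the coincidence $(\varepsilon_1-\varepsilon_2)(H)=\varepsilon_3(H)$ to exclude four of the eight standard parabolics, and then exhibit a suitable $H$ for each of the remaining four. The only cosmetic difference is that the paper writes down explicit numerical witnesses (e.g.\ $H''=H_{\alpha_1}+2H_{\alpha_2}=H_1+H_2$ for $\mathfrak{p}_{\{\varepsilon_1-\varepsilon_2,\varepsilon_3\}}$) and checks the simple-root values directly, whereas you phrase the witnesses via the sign conditions on $\alpha_1(H),\alpha_2(H)$ and add an explicit remark on why the Levi does not grow.
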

\begin{proof}
By Lemma 3.6, we only need to take the hyperbolic elements in $\mathfrak{h}_{\mathfrak{g}_2}$. Let $H=aH_{\alpha_1}+bH_{\alpha_2}=aH_1+(b-a)H_2+(2a-b)H_3\in\mathfrak{h}_{\mathfrak{g}_2}$ for some $a,b\in\mathbb{C}$ be a hyperbolic element in $\mathfrak{g}_2$. Since $(\varepsilon_1-\varepsilon_2)(H)=\varepsilon_3(H)=2a-b$, if $\mathfrak{p}_\Pi$ for $\Pi\subseteq\Delta(so(7,\mathbb{C}))$ is $\mathfrak{g}_2$-compatible, $\varepsilon_1-\varepsilon_2\in\Pi$ if and only if $\varepsilon_3\in\Pi$. Therefore, neither of $\mathfrak{p}_{\{\varepsilon_1-\varepsilon_2\}}$, $\mathfrak{p}_{\{\varepsilon_3\}}$, $\mathfrak{p}_{\{\varepsilon_1-\varepsilon_2, \varepsilon_2-\varepsilon_3\}}$, $\mathfrak{p}_{\{\varepsilon_2-\varepsilon_3, \varepsilon_3\}}$ can be $\mathfrak{g}_2$-compatible. Take the hyperbolic element $H_\phi=4H_{\alpha_1}+7H_{\alpha_2}=4H_1+3H_2+H_3\in\mathfrak{h}_{\mathfrak{g}_2}$, and one will easily check that $\mathfrak{b}_{so(7,\mathbb{C})}=\mathfrak{p}(H_\phi)$. In fact, we have $\mathrm{ad}H_\phi(\mathfrak{h}_{\mathfrak{g}_2})\equiv0$, $(\varepsilon_1-\varepsilon_2)(H_\phi)=4-3=1>0$, $(\varepsilon_2-\varepsilon_3)(H_\phi)=3-1=2>0$ and $\varepsilon_3(H_\phi)=1>0$. Hence, by definition, $\mathfrak{b}_{so(7,\mathbb{C})}$ is $\mathfrak{g}_2$-compatible. Similarly, take the hyperbolic element $H'=2H_{\alpha_1}+3H_{\alpha_2}=2H_1+H_2+H_3\in\mathfrak{h}_{\mathfrak{g}_2}$, and one will easily check that $\mathfrak{p}_{\{\varepsilon_2-\varepsilon_3\}}=\mathfrak{p}(H')$ is $\mathfrak{g}_2$-compatible. And if taking the hyperbolic element $H''=H_{\alpha_1}+2H_{\alpha_2}=H_1+H_2\in\mathfrak{h}_{\mathfrak{g}_2}$, one will easily check that $\mathfrak{p}_{\{\varepsilon_1-\varepsilon_2, \varepsilon_3\}}=\mathfrak{p}(H'')$ is $\mathfrak{g}_2$-compatible. Finally, $\mathfrak{p}_{\Delta(so(7,\mathbb{C}))}=so(7,\mathbb{C})$ is obvious $\mathfrak{g}_2$-compatible because we just take $H_0=0$ to be the required hyperbolic element.
\end{proof}
The result of Proposition 3.7 was also showed by T.Milev and P.Somberg (Corollary 5.3 [\textbf{MS1}]). We only focus on this four $\mathfrak{g}_2$-compatible parabolic subalgebras from now on. In the next section, we shall begin to study decomposition of generalized Verma module in the generalized BGG category $\mathcal{O}^{\mathfrak{p}}$, which is defined to be $M_\mathfrak{p}^{so(7,\mathbb{C})}(\lambda):=U(so(7,\mathbb{C}))\bigotimes_{U(\mathfrak{p})}F_\lambda$ where $\mathrm{Res}_{\mathfrak{h}_{so(7,\mathbb{C})}\cap[\mathfrak{l},\mathfrak{l}]}^{\mathfrak{h}_{so(7,\mathbb{C})}}\lambda$ is a dominant integral weight and $F_\lambda$ is the finite dimensional simple $\mathfrak{l}$-module with highest weight $\lambda\in\Lambda^+(\mathfrak{l})$. Here, $F_\lambda$ is inflated to a $\mathfrak{p}$-module via the projection $\mathfrak{p}\to\mathfrak{p}/\mathfrak{u}_+\cong\mathfrak{l}$. One of extreme cases is $\mathfrak{p}=\mathfrak{b}_{so(7,\mathbb{C})}$. In this case, generalized Verma modules are just Verma modules. However, we shan't discuss the generalized Verma modules attached to $\mathfrak{p}_{\Delta(so(7,\mathbb{C}))}=so(7,\mathbb{C})$ because in this extreme case, $M_{\mathfrak{p}_{\Delta(so(7,\mathbb{C}))}}^{so(7,\mathbb{C})}(\lambda)$ is nothing but the finite dimensional simple $so(7,\mathbb{C})$-module with highest weight $\lambda\in\Lambda^+(so(7,\mathbb{C}))$, whose branching law to $\mathfrak{g}_2$ has already been solved (Theorem 3.4 [\textbf{M}]).

\section{Decomposition in the Grothendieck Group}

\begin{center}
\textbf{4.1 T.Kobayashi's Method}
\end{center}

In this section, we state an important theorem given by T.Kobayashi which offered us a main method to compute decompositions in the next three sections.

Let $\mathfrak{g}'$ be a reductive subalgebra of a complex semi-simple Lie algebra $\mathfrak{g}$, and $\mathfrak{p}$ a parabolic subalgebra of $\mathfrak{g}$. Let $\mathfrak{p}=\mathfrak{l}+\mathfrak{u}_+$ be a $\mathfrak{g}'$-compatible parabolic subalgebra of $\mathfrak{g}$ defined by a hyperbolic element $H\in\mathfrak{g}'$. We take a Cartan subalgebra $\mathfrak{h}_{\mathfrak{g}'}$ of $\mathfrak{g}'$ such that $H\in\mathfrak{h}_{\mathfrak{g}'}$, and extend it to a Cartan subalgebra $\mathfrak{h}_\mathfrak{g}$ of $\mathfrak{g}$. Clearly, $\mathfrak{h}_\mathfrak{g}\subseteq\mathfrak{l}$ and $\mathfrak{h}_{\mathfrak{g}'}\subseteq\mathfrak{l}'$ where $\mathfrak{l}'=\mathfrak{l}\cap\mathfrak{g}$ is the Levi factor of the parabolic subalgebra $\mathfrak{p}'=\mathfrak{p}\cap\mathfrak{g}'$ of $\mathfrak{g}'$.

We recall that $F_\lambda$ denotes the finite dimensional simple module of $\mathfrak{l}$ with highest weight $\lambda\in\Lambda^+(\mathfrak{l})$. Likewise, let $F'_\delta$ denote that of $\mathfrak{l}'$ for $\delta\in\Lambda^+(\mathfrak{l}')$.

Given a vector space $V$ we denote by $S(V)=\displaystyle{\bigoplus_{k=0}^{+\infty}}S^k(V)$ the symmetric tensor algebra over $V$. We extend the adjoint action of $\mathfrak{l}'$ on $\mathfrak{u}_-/\mathfrak{u}_-\cap\mathfrak{g}'$ to $S(\mathfrak{u}_-/\mathfrak{u}_-\cap\mathfrak{g}')$. We set
\begin{center}
$m(\delta;\lambda):=\dim_\mathbb{C}\hom_{\mathfrak{l}'}(F'_\delta,\mathrm{Res}_{\mathfrak{l}'}^\mathfrak{l}F_\lambda\bigotimes S(\mathfrak{u}_-/\mathfrak{u}_-\cap\mathfrak{g}'))$.
\end{center}
We denote $M_\mathfrak{p}^\mathfrak{g}$ to be the parabolic Verma module of $\mathfrak{g}$ attached to its parabolic subalgebra $\mathfrak{p}$. Likewise, let $M_{\mathfrak{p}'}^{\mathfrak{g}'}$ denote that of $\mathfrak{g}'$ attached to its parabolic subalgebra $\mathfrak{p}'$.
\begin{theorem}(Theorem 3.10 [\textbf{Ko}])
Suppose that $\mathfrak{p}=\mathfrak{l}+\mathfrak{u}_+$ is a $\mathfrak{g}'$-compatible parabolic subalgebra of $\mathfrak{g}$, and $\lambda\in\Lambda^+(\mathfrak{l})$.
\begin{enumerate}[(1)]
\item $m(\delta;\lambda)<+\infty$ for all $\delta\in\Lambda^+(\mathfrak{l}')$.
\item In the Grothendieck group of $\mathcal{O}^{\mathfrak{p}'}$, we have the following isomorphism:
\begin{center}
$\mathrm{Res}_{\mathfrak{g}'}^\mathfrak{g}M_\mathfrak{p}^\mathfrak{g}(\lambda)\cong\displaystyle{\bigoplus_{\delta\in\Lambda^+(\mathfrak{l}')}}m(\delta;\lambda)
M_{\mathfrak{p}'}^{\mathfrak{g}'}(\delta)$.
\end{center}
\end{enumerate}
\end{theorem}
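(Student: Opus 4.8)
The plan is to reduce the whole statement to the Poincaré--Birkhoff--Witt theorem together with a single eigenvalue estimate for the hyperbolic element $H$.

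\textbf{Step 1 (the grading by $H$).} Since $H\in\mathfrak{h}_{\mathfrak{g}'}\subseteq\mathfrak{h}_\mathfrak{g}$ and $\mathfrak{p}=\mathfrak{p}(H)$, the decomposition $\mathfrak{g}=\mathfrak{u}_-\oplus\mathfrak{l}\oplus\mathfrak{u}_+$ into the sums of the negative, zero and positive $\mathrm{ad}H$-eigenspaces refines the root space decomposition, all eigenvalues being real; moreover $\mathfrak{g}'$ is $\mathrm{ad}H$-stable, $\mathfrak{u}'_-=\mathfrak{u}_-\cap\mathfrak{g}'$, $\mathfrak{l}'=\mathfrak{l}\cap\mathfrak{g}'$, and $H$ lies in the center of $\mathfrak{l}'$. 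Since $\mathfrak{l}'$ is reductive and $\mathfrak{h}_{\mathfrak{g}'}$ acts diagonalizably, any finite-dimensional $\mathfrak{l}'$-module on which the center of $\mathfrak{l}'$ acts diagonalizably is semisimple; in particular each $\mathrm{ad}H$-eigenspace of $\mathfrak{g}$ is a semisimple $\mathfrak{l}'$-module, so the exact sequence of $\mathfrak{l}'$-modules $0\to\mathfrak{u}'_-\to\mathfrak{u}_-\to\mathfrak{u}_-/\mathfrak{u}_-\cap\mathfrak{g}'\to 0$ splits. Fix an $\mathfrak{l}'$-stable complement $W\subseteq\mathfrak{u}_-$ to $\mathfrak{u}'_-$, so $W\cong\mathfrak{u}_-/\mathfrak{u}_-\cap\mathfrak{g}'$ as $\mathfrak{l}'$-modules; note that $\mathrm{ad}H$ has only real negative eigenvalues on $W$, all $\le-\varepsilon$ for some $\varepsilon>0$.

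\textbf{Step 2 (the $\mathfrak{l}'$-module picture).} By PBW (the symmetrization isomorphism) one has $M_\mathfrak{p}^\mathfrak{g}(\lambda)\cong S(\mathfrak{u}_-)\otimes F_\lambda$ as $\mathfrak{l}$-modules and $M_{\mathfrak{p}'}^{\mathfrak{g}'}(\delta)\cong S(\mathfrak{u}'_-)\otimes F'_\delta$ as $\mathfrak{l}'$-modules. Restricting the first to $\mathfrak{l}'$ and using $S(\mathfrak{u}_-)\cong S(\mathfrak{u}'_-)\otimes S(W)$ as $\mathfrak{l}'$-modules gives
\[
\mathrm{Res}_{\mathfrak{l}'}^\mathfrak{l}M_\mathfrak{p}^\mathfrak{g}(\lambda)\;\cong\;S(\mathfrak{u}'_-)\otimes\bigl(\mathrm{Res}_{\mathfrak{l}'}^\mathfrak{l}F_\lambda\otimes S(\mathfrak{u}_-/\mathfrak{u}_-\cap\mathfrak{g}')\bigr)
\]
as $\mathfrak{l}'$-modules. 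For part (1): a nonzero $\mathfrak{l}'$-homomorphism $F'_\delta\to\mathrm{Res}_{\mathfrak{l}'}^\mathfrak{l}F_\lambda\otimes S^k(W)$ forces, on comparing the scalar by which the central element $H$ acts, $\delta(H)=\lambda(H)+c$ with $c$ a (real, $\le-k\varepsilon$) $\mathrm{ad}H$-eigenvalue of $S^k(W)$, so only finitely many $k$ can contribute and each $\mathrm{Res}_{\mathfrak{l}'}^\mathfrak{l}F_\lambda\otimes S^k(W)$ is finite-dimensional; hence $m(\delta;\lambda)<+\infty$. As each $\mathrm{Res}_{\mathfrak{l}'}^\mathfrak{l}F_\lambda\otimes S^k(W)$ is moreover a semisimple $\mathfrak{l}'$-module, $\mathrm{Res}_{\mathfrak{l}'}^\mathfrak{l}F_\lambda\otimes S(\mathfrak{u}_-/\mathfrak{u}_-\cap\mathfrak{g}')\cong\bigoplus_\delta m(\delta;\lambda)F'_\delta$ as $\mathfrak{l}'$-modules, and therefore
\[
\mathrm{Res}_{\mathfrak{l}'}^\mathfrak{l}M_\mathfrak{p}^\mathfrak{g}(\lambda)\;\cong\;\bigoplus_{\delta\in\Lambda^+(\mathfrak{l}')}m(\delta;\lambda)\bigl(S(\mathfrak{u}'_-)\otimes F'_\delta\bigr)\;\cong\;\bigoplus_{\delta\in\Lambda^+(\mathfrak{l}')}m(\delta;\lambda)\,M_{\mathfrak{p}'}^{\mathfrak{g}'}(\delta)
\]
as $\mathfrak{l}'$-modules; in particular the two sides have the same formal $\mathfrak{h}_{\mathfrak{g}'}$-character.

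\textbf{Step 3 (passage to the Grothendieck group).} It remains to upgrade this identity of $\mathfrak{l}'$-characters to the asserted identity in the Grothendieck group of $\mathcal{O}^{\mathfrak{p}'}$, which sees the full $\mathfrak{g}'$-module structure. Since $M_\mathfrak{p}^\mathfrak{g}(\lambda)$ has finite length in $\mathcal{O}^\mathfrak{p}$, Proposition 3.8 (applied to its simple subquotients, together with the fact that an extension of discretely decomposable modules is discretely decomposable) shows $\mathrm{Res}_{\mathfrak{g}'}^\mathfrak{g}M_\mathfrak{p}^\mathfrak{g}(\lambda)$ is discretely decomposable in $\mathcal{O}^{\mathfrak{p}'}$ and hence defines a class in the suitably completed Grothendieck group of $\mathcal{O}^{\mathfrak{p}'}$; in that group the classes $\{[M_{\mathfrak{p}'}^{\mathfrak{g}'}(\delta)]\}_{\delta\in\Lambda^+(\mathfrak{l}')}$ form a topological basis (every class is a unique, possibly infinite, integral combination of them — uniqueness because each $[M_{\mathfrak{p}'}^{\mathfrak{g}'}(\delta)]$ equals $[L^{\mathfrak{g}'}(\delta)]$ plus integral multiples of $[L^{\mathfrak{g}'}(\mu)]$ with $\mu<\delta$, and the $\mathrm{ad}H$-grading, with finite-dimensional graded pieces, makes all the sums well defined). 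Writing $[\mathrm{Res}_{\mathfrak{g}'}^\mathfrak{g}M_\mathfrak{p}^\mathfrak{g}(\lambda)]=\sum_\delta a_\delta[M_{\mathfrak{p}'}^{\mathfrak{g}'}(\delta)]$ and applying the injective formal-character map, Step 2 forces $\sum_\delta a_\delta\,\mathrm{ch}\,M_{\mathfrak{p}'}^{\mathfrak{g}'}(\delta)=\sum_\delta m(\delta;\lambda)\,\mathrm{ch}\,M_{\mathfrak{p}'}^{\mathfrak{g}'}(\delta)$; since $\mathrm{ch}\,M_{\mathfrak{p}'}^{\mathfrak{g}'}(\delta)=e^\delta+(\text{terms with exponents}<\delta)$, comparing coefficients from the top down in the standard partial order gives $a_\delta=m(\delta;\lambda)$ for every $\delta$, which is (2). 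I expect Step 3 — setting up the completed Grothendieck group, checking the convergence of the sums, and verifying that $\mathrm{Res}_{\mathfrak{g}'}^\mathfrak{g}M_\mathfrak{p}^\mathfrak{g}(\lambda)$ genuinely lives in it — to be the main obstacle; Steps 1 and 2 are routine PBW manipulation and weight bookkeeping.
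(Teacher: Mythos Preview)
The paper does not give its own proof of this statement: it is quoted verbatim as Theorem 3.10 of [\textbf{Ko}] and used as a black box. Your proposal is essentially a reconstruction of Kobayashi's original argument --- PBW to identify $M_\mathfrak{p}^\mathfrak{g}(\lambda)$ with $S(\mathfrak{u}_-)\otimes F_\lambda$ as an $\mathfrak{l}$-module, the $\mathfrak{l}'$-splitting $\mathfrak{u}_-=\mathfrak{u}'_-\oplus W$ to factor $S(\mathfrak{u}_-)\cong S(\mathfrak{u}'_-)\otimes S(W)$, the $H$-eigenvalue bound for finiteness, and a formal-character comparison to pass to the Grothendieck group --- and it is correct. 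Your own caveat about Step~3 is well placed: since $\mathrm{Res}_{\mathfrak{g}'}^\mathfrak{g}M_\mathfrak{p}^\mathfrak{g}(\lambda)$ has infinite length as a $\mathfrak{g}'$-module, the identity is really one of formal $\mathfrak{h}_{\mathfrak{g}'}$-characters (equivalently, an identity in a suitably completed Grothendieck group), and this is exactly how Kobayashi states and proves it; the present paper never needs anything finer than the character identity either.
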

In Section 4.2, 4.3 and 4.4, we shall make use of the method introduced in Section 4.1 to obtain decompositions as in Theorem 4.1. Our main aim is nothing but to compute $m(\delta;\lambda)$. Hence, it includes three steps.
\begin{enumerate}[Step 1:]
\item to compute $\mathrm{Res}_{\mathfrak{l}'}^\mathfrak{l}F_\lambda$;
\item to compute $S(\mathfrak{u}_-/\mathfrak{u}_-\cap\mathfrak{g}_2)$;
\item to compute $\dim_\mathbb{C}\hom_{\mathfrak{l}'}(F'_\delta,\mathrm{Res}_{\mathfrak{l}'}^\mathfrak{l}F_\lambda\otimes S(\mathfrak{u}_-/\mathfrak{u}_-\cap\mathfrak{g}_2))$.
\end{enumerate}
In Section 4.2 where the generalized Verma module is just the standard Verma module, the computation is not complicate because $F'_\delta$ will be only one dimensional. However, things are become much more complicate in the latter two cases. One will see that $F'_\delta$ is isomorphic to some finite dimensional module of $sl(2,\mathbb{C})$.

\begin{center}
\textbf{4.2 $M_{\mathfrak{b}_{so(7,\mathbb{C})}}^{so(7,\mathbb{C})}(\lambda)$}
\end{center}

Let $\mathfrak{p}=\mathfrak{b}_{so(7,\mathbb{C})}$, then $\mathfrak{p}'=\mathfrak{p}\cap\mathfrak{g}_2=\mathfrak{b}_{\mathfrak{g}_2}$, $\mathfrak{l}=\mathfrak{h}_{so(7,\mathbb{C})}$, and $\mathfrak{l}'=\mathfrak{h}_{\mathfrak{g}_2}=\mathrm{Span}_\mathbb{C}\{H_1-H_2+2H_3,H_2-H_3\}$. Given $\lambda=a\varepsilon_1+b\varepsilon_2+c\varepsilon_3$ for some $a,b,c\in\mathbb{C}$, $\mathrm{Res}_{\mathfrak{h}_{\mathfrak{g}_2}}^{\mathfrak{h}_{so(7,\mathbb{C})}}\lambda=(2a+b+c)\alpha_1+(a+b)\alpha_2$. Thus, $F_\lambda$ is an 1-dimension simple module of $\mathfrak{h}_{so(7,\mathbb{C})}$ and
\begin{center}
$\mathrm{Res}_{\mathfrak{h}_{\mathfrak{g}_2}}^{\mathfrak{h}_{so(7,\mathbb{C})}}F_\lambda=F'_{(2a+b+c)\alpha_1+(a+b)\alpha_2}$.
\end{center}
On the other hand, it is not hard to compute that $\mathfrak{u}_-/\mathfrak{u}_-\cap\mathfrak{g}_2=\mathrm{Span}_\mathbb{C}\{v_{-\alpha_1},\\v_{-\alpha_1-\alpha_2},v_{-2\alpha_1-\alpha_2}\}\cong F'_{-\alpha_1}\oplus F'_{-\alpha_1-\alpha_2}\oplus F'_{-2\alpha_1-\alpha_2}$ as $\mathfrak{h}_{\mathfrak{g}_2}$-module. Therefore,
\begin{center}
$S(\mathfrak{u}_-/\mathfrak{u}_-\cap\mathfrak{g}_2)=\displaystyle{\bigoplus_{i,j,k\in\mathbb{N}}}F'_{-(2i+j+k)\alpha_1-(i+j)\alpha_2}$.
\end{center}
Combine the two equations above, and we have
\begin{center}
$\mathrm{Res}_{\mathfrak{h}_{\mathfrak{g}_2}}^{\mathfrak{h}_{so(7,\mathbb{C})}}F_\lambda\otimes S(\mathfrak{u}_-/\mathfrak{u}_-\cap\mathfrak{g}_2)\cong\displaystyle{\bigoplus_{i,j,k\in\mathbb{N}}}F'_{(2a+b+c-2i-j-k)\alpha_1+(a+b-i-j)\alpha_2}$.
\end{center}
Now let $\delta=u\alpha_1+v\alpha_2$ for some $u,v\in\mathbb{C}$ such that $F'_{\delta}$ is an 1-dimension simple module of $\mathfrak{h}_{\mathfrak{g}_2}$. Then according to the definition of $m(\delta,\lambda)$, we have
\begin{center}
$m(\delta,\lambda)=\mathrm{Card}\{(i,j,k)\in\mathbb{N}^3\mid 2i+j+k=2a+b+c-u,i+j=a+b-v\}$.
\end{center}
Based on the discussion above, we quickly obtain the formula for $m(\delta,\lambda)$.
\begin{lemma}
With the notations above, and let $\mu=\mathrm{Res}_{\mathfrak{g}_2}^{so(7,\mathbb{C})}\lambda-\delta$, then $m(\delta,\lambda)=1+\min\{\mu(H_{3\alpha_1+\alpha_2}),\mu(H_{3\alpha_1+2\alpha_2})\}$ if and only if $\mu(H_{3\alpha_1+\alpha_2})\in\mathbb{N}$ and $\mu(H_{3\alpha_1+2\alpha_2})\in\mathbb{N}$; otherwise, $m(\delta,\lambda)=0$.
\end{lemma}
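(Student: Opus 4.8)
The plan is to reduce the claim to an elementary lattice-point count, using the explicit description of $m(\delta,\lambda)$ obtained just above the statement: for $\lambda=a\varepsilon_1+b\varepsilon_2+c\varepsilon_3$ and $\delta=u\alpha_1+v\alpha_2$ we have
\[
m(\delta,\lambda)=\mathrm{Card}\{(i,j,k)\in\mathbb{N}^3\mid 2i+j+k=2a+b+c-u,\ i+j=a+b-v\}.
\]
So the whole task is to solve this pair of linear equations in nonnegative integers and rewrite the answer in terms of $\mu$.

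First I would rewrite $\mu$ in the basis $\{\alpha_1,\alpha_2\}$: since $\mathrm{Res}_{\mathfrak{g}_2}^{so(7,\mathbb{C})}\lambda=(2a+b+c)\alpha_1+(a+b)\alpha_2$ we get $\mu=p\alpha_1+q\alpha_2$ with $p:=2a+b+c-u$ and $q:=a+b-v$, which are exactly the two right-hand sides appearing in the count. Next I would identify the two coroots. Through the embedding of Section 2 the roots $3\alpha_1+\alpha_2$ and $3\alpha_1+2\alpha_2$ are the restrictions of the long roots $\varepsilon_1+\varepsilon_3$ and $\varepsilon_1+\varepsilon_2$ of $so(7,\mathbb{C})$, whose coroots $H_1+H_3$ and $H_1+H_2$ already lie in $\mathfrak{h}_{\mathfrak{g}_2}$; expressing them in the basis $\{H_{\alpha_1}=H_1-H_2+2H_3,\ H_{\alpha_2}=H_2-H_3\}$ gives $H_{3\alpha_1+\alpha_2}=H_{\alpha_1}+H_{\alpha_2}$ and $H_{3\alpha_1+2\alpha_2}=H_{\alpha_1}+2H_{\alpha_2}$. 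Pairing $\mu=p\alpha_1+q\alpha_2$ against these, using $\alpha_1(H_{\alpha_1})=2$, $\alpha_2(H_{\alpha_1})=-3$, $\alpha_1(H_{\alpha_2})=-1$, $\alpha_2(H_{\alpha_2})=2$, yields
\[
\mu(H_{3\alpha_1+\alpha_2})=p-q,\qquad \mu(H_{3\alpha_1+2\alpha_2})=q.
\]

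Finally I would finish the count. In the system $i+j=q$, $2i+j+k=p$ I eliminate $j=q-i$ and $k=p-q-i$; a triple $(i,j,k)\in\mathbb{N}^3$ then exists if and only if $q\in\mathbb{N}$ and $p-q\in\mathbb{N}$, and in that case the solutions are parametrized by $i$ with $0\le i\le\min\{q,p-q\}$, so $m(\delta,\lambda)=1+\min\{q,p-q\}=1+\min\{\mu(H_{3\alpha_1+2\alpha_2}),\mu(H_{3\alpha_1+\alpha_2})\}$; if instead $q\notin\mathbb{N}$ or $p-q\notin\mathbb{N}$ the set is empty and $m(\delta,\lambda)=0$. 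Since $p-q=\mu(H_{3\alpha_1+\alpha_2})$ and $q=\mu(H_{3\alpha_1+2\alpha_2})$, this is precisely the assertion. There is no serious obstacle: the argument is a routine Diophantine count, and the only things that need care are the coroot bookkeeping (verifying $H_{3\alpha_1+\alpha_2}$ and $H_{3\alpha_1+2\alpha_2}$ in the $H_{\alpha_i}$ basis) and checking that the integrality and nonnegativity conditions on $p$ and $q$ match exactly the conditions $\mu(H_{3\alpha_1+\alpha_2}),\mu(H_{3\alpha_1+2\alpha_2})\in\mathbb{N}$.
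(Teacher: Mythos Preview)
Your proposal is correct and follows essentially the same route as the paper: both reduce the count to parametrizing solutions by $i$ with $0\le i\le\min\{a+b-v,\,a+c-u+v\}$ (your $q$ and $p-q$), and both identify these two quantities as $\mu(H_{3\alpha_1+2\alpha_2})$ and $\mu(H_{3\alpha_1+\alpha_2})$. The only difference is cosmetic: the paper subtracts the two equations to obtain $i+k=a+c-u+v$ and $i+j=a+b-v$ directly, while you eliminate $j$ and $k$ and do the coroot bookkeeping more explicitly.
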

\begin{proof}
We have already known that $m(\delta,\lambda)\neq0$ if and only if there exist $i,j,k\in\mathbb{N}$ such that $2i+j+k=2a+b+c-u$ and $i+j=a+b-v$; equivalently, $i+k=a+c-u+v$ and $i+j=a+b-v$. Because $j,k\geq0$, $0\leq i\leq\min\{a+c-u+v,a+b-v\}$. Moreover, once $0\leq i\leq\min\{a+c-u+v,a+b-v\}, i\in\mathbb{N}$ is fixed, there exists a unique $(j,k)\in\mathbb{N}^2$ such that $i+k=a+c-u+v$ and $i+j=a+b-v$ hold. Thus, $m(\delta,\lambda)=\mathrm{Card}\{(i,j,k)\in\mathbb{N}^3\mid i+k=a+c-u+v,i+j=a+b-v\}=1+min\{a+c-u+v,a+b-v\}$. But $\mu(H_{3\alpha_1+\alpha_2})=a+c-u+v$ and $\mu(H_{3\alpha_1+2\alpha_2})=a+b-v$, from which the conclusion follows.
\end{proof}
Apply Theorem 4.1, the decomposition of $M_{\mathfrak{b}_{so(7,\mathbb{C})}}^{so(7,\mathbb{C})}(\lambda)$ in the Grothendieck Group of $\mathcal{O}^{\mathfrak{b}_{\mathfrak{g}_2}}$ is obtained immediately.
\begin{proposition}
Let $\lambda\in\mathfrak{h}_{so(7,\mathbb{C})}^*$ and $\delta\in\mathfrak{h}_{\mathfrak{g}_2}^*$. Denote $\mu=\mathrm{Res}_{\mathfrak{g}_2}^{so(7,\mathbb{C})}\lambda-\delta$. Then
\begin{center}
$\mathrm{Res}_{\mathfrak{g}_2}^{so(7,\mathbb{C})}M_{\mathfrak{b}_{so(7,\mathbb{C})}}^{so(7,\mathbb{C})}(\lambda)
=\displaystyle{\bigoplus_{\mbox{\tiny$\begin{array}{c}\mu(H_{3\alpha_1+\alpha_2})\in\mathbb{N}\\ \mu(H_{3\alpha_1+2\alpha_2})\in\mathbb{N}\end{array}$}}}
(1+\min\{\mu(H_{3\alpha_1+\alpha_2}),\mu(H_{3\alpha_1+2\alpha_2})\})M_{\mathfrak{b}_{\mathfrak{g}_2}}^{\mathfrak{g}_2}(\delta)$
\end{center}
in the Grothendieck Group of $\mathcal{O}^{\mathfrak{b}_{\mathfrak{g}_2}}$.
\end{proposition}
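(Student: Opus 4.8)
The plan is to obtain Proposition 4.3 as an immediate consequence of Theorem 4.1 specialized to $\mathfrak{p}=\mathfrak{b}_{so(7,\mathbb{C})}$, combined with the multiplicity formula of Lemma 4.2. So this will be a short bookkeeping argument rather than anything substantial.

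First I would invoke the setup of Section 4.2: taking $\mathfrak{p}=\mathfrak{b}_{so(7,\mathbb{C})}$ we have $\mathfrak{l}=\mathfrak{h}_{so(7,\mathbb{C})}$, $\mathfrak{p}'=\mathfrak{p}\cap\mathfrak{g}_2=\mathfrak{b}_{\mathfrak{g}_2}$, and $\mathfrak{l}'=\mathfrak{h}_{\mathfrak{g}_2}$, and that $\mathfrak{b}_{so(7,\mathbb{C})}$ is $\mathfrak{g}_2$-compatible by Proposition 3.7, so the hypotheses of Theorem 4.1 are satisfied (the condition $\lambda\in\Lambda^+(\mathfrak{l})$ is automatic here, since $\mathfrak{l}$ is a torus). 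Under this specialization the parabolic Verma modules $M_\mathfrak{p}^\mathfrak{g}(\lambda)$ and $M_{\mathfrak{p}'}^{\mathfrak{g}'}(\delta)$ appearing in Theorem 4.1 are exactly the ordinary Verma modules $M_{\mathfrak{b}_{so(7,\mathbb{C})}}^{so(7,\mathbb{C})}(\lambda)$ and $M_{\mathfrak{b}_{\mathfrak{g}_2}}^{\mathfrak{g}_2}(\delta)$. Moreover, since $\mathfrak{l}'=\mathfrak{h}_{\mathfrak{g}_2}$ is abelian, every finite-dimensional simple $\mathfrak{l}'$-module is one-dimensional and indexed by an arbitrary weight; hence $\Lambda^+(\mathfrak{l}')=\mathfrak{h}_{\mathfrak{g}_2}^*$, and the index set of the direct sum in Theorem 4.1(2) is simply $\mathfrak{h}_{\mathfrak{g}_2}^*$.

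Next I would substitute the explicit value of $m(\delta;\lambda)$ furnished by Lemma 4.2: writing $\mu=\mathrm{Res}_{\mathfrak{g}_2}^{so(7,\mathbb{C})}\lambda-\delta$, we have $m(\delta;\lambda)=1+\min\{\mu(H_{3\alpha_1+\alpha_2}),\mu(H_{3\alpha_1+2\alpha_2})\}$ when $\mu(H_{3\alpha_1+\alpha_2})\in\mathbb{N}$ and $\mu(H_{3\alpha_1+2\alpha_2})\in\mathbb{N}$, and $m(\delta;\lambda)=0$ otherwise. Discarding the terms with zero coefficient leaves precisely the direct sum displayed in the proposition, the sum now running over those $\delta$ (equivalently, those $\mu$) for which both $\mu(H_{3\alpha_1+\alpha_2})$ and $\mu(H_{3\alpha_1+2\alpha_2})$ lie in $\mathbb{N}$. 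Finiteness of each coefficient is exactly Theorem 4.1(1), so the expression is a well-defined element of the Grothendieck group of $\mathcal{O}^{\mathfrak{b}_{\mathfrak{g}_2}}$, and the asserted isomorphism holds there.

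There is no genuine obstacle in this argument; it is entirely a matter of matching notation. The only points deserving a line of care are (i) checking that the hypotheses of Theorem 4.1 truly apply, which reduces to Proposition 3.7 and the triviality of $\Lambda^+(\mathfrak{l})$ for a torus, and (ii) making sure the reparametrization $\mu=\mathrm{Res}_{\mathfrak{g}_2}^{so(7,\mathbb{C})}\lambda-\delta$ is carried through consistently so that the condition imposed on $\delta$ by $m(\delta;\lambda)\neq 0$ in Theorem 4.1 becomes exactly the stated condition on $\mu$. With those checks in place the proposition follows at once.
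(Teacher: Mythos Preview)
Your proposal is correct and matches the paper's own proof, which simply cites Theorem 4.1 and Lemma 4.2; you have merely spelled out the routine verifications (compatibility via Proposition 3.7, $\Lambda^+(\mathfrak{l}')=\mathfrak{h}_{\mathfrak{g}_2}^*$, and the reparametrization by $\mu$) that the paper leaves implicit.
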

\begin{proof}
By Theorem 4.1 and Lemma 4.2, the conclusion is proved.
\end{proof}

\begin{center}
\textbf{4.3 $M_{\mathfrak{p}_{\{\varepsilon_2-\varepsilon_3\}}}^{so(7,\mathbb{C})}(\lambda)$}
\end{center}

We consider $\mathfrak{p}=\mathfrak{p}_{\{\varepsilon_2-\varepsilon_3\}}$. In this case, $\mathfrak{p}'=\mathfrak{p}_{\{\alpha_2\}}$, $\mathfrak{l}=\mathfrak{h}_{so(7,\mathbb{C})}\oplus\mathbb{C}X_{\varepsilon_2-\varepsilon_3}\oplus\mathbb{C}X_{\varepsilon_3-\varepsilon_2}$, and $\mathfrak{l}'=\mathfrak{h}_{\mathfrak{g}_2}\oplus\mathbb{C}X_{\alpha_2}\oplus\mathbb{C}X_{-\alpha_2}$. We write $\mathfrak{l}'=\mathrm{Span}_\mathbb{C}\{H_{2\alpha_1+\alpha_2},H_{\alpha_2},X_{\alpha_2},\\X_{-\alpha_2}\}=\mathrm{Span}_\mathbb{C}
\{2H_1+H_2+H_3,H_2-H_3,X_{\alpha_2},X_{-\alpha_2}\}$, and it is obvious that $\mathfrak{l}'\cong gl(2,\mathbb{C})$ given by $H_{2\alpha_1+\alpha_2}\to\left(\begin{array}{cc}1&0\\0&1\end{array}\right)$, $H_{\alpha_2}\to\left(\begin{array}{cc}1&0\\0&-1\end{array}\right)$, $X_{\alpha_2}\to\left(\begin{array}{cc}0&1\\0&0\end{array}\right)$, and $X_{-\alpha_2}\to\left(\begin{array}{cc}0&0\\1&0\end{array}\right)$. In particular, $\mathrm{Span}_\mathbb{C}\{H_{\alpha_2},X_{\alpha_2},X_{-\alpha_2}\}\cong sl(2,\mathbb{C})$ and $\mathbb{C}H_{2\alpha_1+\alpha_2}$ is the center of $\mathfrak{l}'$.

From now on, we denote $F(n)$ to be the finite dimensional simple module of $gl(2,\mathbb{C})$ with highest weight $n$, whose center act as 0. Moreover, we denote $\rho_0$ to be the 1-dimensional simple module of $gl(2,\mathbb{C})$ where $sl(2,\mathbb{C})$ act as 0 and $\left(\begin{array}{cc}1&0\\0&1\end{array}\right)$ acts as multiplication by 2.

Now let $\lambda=a\varepsilon_1+b\varepsilon_2+c\varepsilon_3$ for some $a,b,c\in\mathbb{C}$. Here, we require $\lambda$ to be $[\mathfrak{l},\mathfrak{l}]$-dominant integral, so $\lambda(H_{\varepsilon_2-\varepsilon_3})=b-c\in\mathbb{N}$. We compute that $\lambda(H_{2\alpha_1+\alpha_2})=2a+b+c$ and $\lambda(H_{\alpha_2})=b-c$. Hence,
\begin{center}
$\mathrm{Res}_{\mathfrak{l}'}^\mathfrak{l}F_\lambda\cong F(b-c)\otimes\frac{2a+b+c}{2}\rho_0$.
\end{center}
On the other hand, $\mathfrak{u}_-/\mathfrak{u}_-\cap\mathfrak{g}_2=\mathrm{Span}_\mathbb{C}\{v_{-\alpha_1},v_{-\alpha_1-\alpha_2},v_{-2\alpha_1-\alpha_2}\}$. Check the action of $H_{2\alpha_1+\alpha_2}$, $H_{\alpha_2}$, $X_{\alpha_2}$ and $X_{-\alpha_2}$ on the weight vectors $v_{-\alpha_1}$, $v_{-\alpha_1-\alpha_2}$ and $v_{-2\alpha_1-\alpha_2}$, we have $\mathrm{Span}_\mathbb{C}\{v_{-\alpha_1},v_{-\alpha_1-\alpha_2}\}\cong F(1)\otimes-\frac{1}{2}\rho_0$ and $\mathrm{Span}_\mathbb{C}\{v_{-2\alpha_1-\alpha_2}\}\cong-\rho_0$. Thus,
\begin{center}
$S(\mathfrak{u}_-/\mathfrak{u}_-\cap\mathfrak{g}_2)\cong\displaystyle{\bigoplus_{i,j\in\mathbb{N}}}S^i(F(1)\otimes-\frac{1}{2}\rho_0)\otimes S^j(-\rho_0)\cong \displaystyle{\bigoplus_{i,j\in\mathbb{N}}}F(i)\otimes-(\frac{i}{2}+j)\rho_0$.
\end{center}
Here, we use the fact that $S^i(F(1))\cong F(i)$. Combine the two equations above, and we have
\begin{center}
$\mathrm{Res}_{\mathfrak{l}'}^\mathfrak{l}F_\lambda\otimes S(\mathfrak{u}_-/\mathfrak{u}_-\cap\mathfrak{g}_2)\cong\displaystyle{\bigoplus_{i,j\in\mathbb{N}}}F(i)\otimes F(b-c)\otimes(\frac{2a+b+c}{2}-\frac{i}{2}-j)\rho_0$.
\end{center}
But $F(i)\otimes F(b-c)$ is not necessarily a simple $gl(2,\mathbb{C})$-module. Luckily, by Littlewood-Richardson theorem (Theorem 9.74 [\textbf{K}]), we have $F(i)\otimes F(b-c)\cong\displaystyle{\bigoplus_{\mbox{\tiny$\begin{array}{c}|b-c-i|\leq k\leq b-c+i\\b-c+i-k\equiv0(\mathrm{mod} 2)\end{array}$}}}F(k)$. Then we obtain a direct sum with each summand a simple $\mathfrak{l}'$-module.
\begin{center}
$\mathrm{Res}_{\mathfrak{l}'}^\mathfrak{l}F_\lambda\otimes S(\mathfrak{u}_-/\mathfrak{u}_-\cap\mathfrak{g}_2)\cong\displaystyle{\bigoplus_{i,j\in\mathbb{N}}}\displaystyle{\bigoplus_{\mbox{\tiny$\begin{array}{c}|b-c-i|
\leq k\leq b-c+i\\b-c+i-k\equiv0(\mathrm{mod} 2)\end{array}$}}}F(k)\otimes(\frac{2a+b+c}{2}-\frac{i}{2}-j)\rho_0$.
\end{center}
Now let $\delta=u\alpha_1+v\alpha_2$ for some $u,v\in\mathbb{C}$. Here, we require $\delta$ to be $[\mathfrak{l}',\mathfrak{l}']$-dominant integral, so $\delta(H_{\alpha_2})=2v-u\in\mathbb{N}$. And since $\delta(H_{2\alpha_1+\alpha_2})=u$, $F'_\delta\cong F(2v-u)\otimes\frac{u}{2}\rho_0$.
\begin{lemma}
With the notations above, and let $\sigma=\mathrm{Res}_{\mathfrak{g}_2}^{so(7,\mathbb{C})}\lambda+\delta$ and $\mu=\mathrm{Res}_{\mathfrak{g}_2}^{so(7,\mathbb{C})}\lambda-\delta$. Then $m(\delta,\lambda)\neq0$ if and only if $\mu(H_{2\alpha_1+\alpha_2})\in\mathbb{N}$, $\mu(H_{3\alpha_1+\alpha_2})\in\mathbb{Z}$, and $|\mu(H_{\alpha_2})|\leq\mu(H_{2\alpha_1+\alpha_2})$. In this case, \[m(\delta,\lambda)=1+\frac{\min\{\mu(H_{2\alpha_1+\alpha_2}),\sigma(H_{\alpha_2})\}-|\mu(H_{\alpha_2})|}{2};\] otherwise, $m(\delta,\lambda)=0$.
\end{lemma}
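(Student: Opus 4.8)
The plan is to read off $m(\delta,\lambda)$ as a plain multiplicity from the explicit decomposition of $\mathrm{Res}_{\mathfrak{l}'}^{\mathfrak{l}}F_\lambda\otimes S(\mathfrak{u}_-/\mathfrak{u}_-\cap\mathfrak{g}_2)$ displayed immediately before the statement, and then to rewrite the resulting lattice-point count in terms of the pairings $\mu(H_\bullet)$ and $\sigma(H_\bullet)$. Throughout I write $\ell:=b-c=\lambda(H_{\alpha_2})\in\mathbb{N}$ and $n:=2v-u=\delta(H_{\alpha_2})\in\mathbb{N}$, so that $F'_\delta\cong F(n)\otimes\tfrac{u}{2}\rho_0$, and I set $N:=2a+b+c-u$.

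First I would use that the displayed module is a direct sum of \emph{simple} $\mathfrak{l}'\cong gl(2,\mathbb{C})$-modules $F(k)\otimes t\rho_0$ and that the $sl(2,\mathbb{C})$ Clebsch--Gordan rule appearing there is multiplicity-free; hence $m(\delta,\lambda)$ is exactly the number of triples $(i,j,k)\in\mathbb{N}^3$ for which $F(k)\otimes\bigl(\tfrac{2a+b+c}{2}-\tfrac{i}{2}-j\bigr)\rho_0\cong F(n)\otimes\tfrac{u}{2}\rho_0$ and $F(k)$ occurs in $F(i)\otimes F(\ell)$. The first condition forces $k=n$ and, comparing $\rho_0$-characters, $i+2j=N$; the second forces $|\ell-i|\le n\le\ell+i$ together with $n\equiv\ell+i\pmod 2$. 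Given $i$, a unique $j\in\mathbb{N}$ with $i+2j=N$ exists iff $0\le i\le N$ and $i\equiv N\pmod 2$, and the triangle inequalities on $(i,\ell,n)$ are equivalent to $|\ell-n|\le i\le\ell+n$. Thus
\[
m(\delta,\lambda)=\mathrm{Card}\{\,i\in\mathbb{N}\ :\ |\ell-n|\le i\le\min\{N,\ell+n\},\ i\equiv N\pmod 2,\ i\equiv\ell+n\pmod 2\,\}.
\]

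Next I would translate. Using $H_{2\alpha_1+\alpha_2}=2H_1+H_2+H_3$, $H_{\alpha_2}=H_2-H_3$, $H_{3\alpha_1+\alpha_2}=H_{\varepsilon_1+\varepsilon_3}=H_1+H_3$ and the restriction formula $\mathrm{Res}_{\mathfrak{g}_2}^{so(7,\mathbb{C})}\lambda=(2a+b+c)\alpha_1+(a+b)\alpha_2$ from Section 4.2, a short computation gives $N=\mu(H_{2\alpha_1+\alpha_2})$, $\ell-n=\mu(H_{\alpha_2})$, $\ell+n=\sigma(H_{\alpha_2})$ and $\mu(H_{3\alpha_1+\alpha_2})=a+c-u+v$. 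For the set above to be nonempty we need $i+2j=N$ to be solvable at all, i.e.\ $\mu(H_{2\alpha_1+\alpha_2})\in\mathbb{N}$; we need the two congruences on $i$ to have a common solution, and since $N-(\ell+n)=2(a+c-v)$ while $\mu(H_{3\alpha_1+\alpha_2})-(a+c-v)=2v-u\in\mathbb{Z}$, this is precisely $\mu(H_{3\alpha_1+\alpha_2})\in\mathbb{Z}$; and finally we need $|\ell-n|\le\min\{N,\ell+n\}$, which, as $|\ell-n|\le\ell+n$ always holds, reduces to $|\mu(H_{\alpha_2})|\le\mu(H_{2\alpha_1+\alpha_2})$. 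These are exactly the three stated conditions, and if any fails then $m(\delta,\lambda)=0$.

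Finally, under those three conditions the count collapses: $|\ell-n|$ equals $\ell-n$ or $n-\ell$, so $|\ell-n|\equiv\ell+n\pmod 2$, and likewise $\min\{N,\ell+n\}\equiv\ell+n\pmod 2$ (using $N\equiv\ell+n$), so the admissible $i$ form an arithmetic progression of step $2$ running from $|\ell-n|$ up to $\min\{N,\ell+n\}$ inclusive. Its length is $1+\tfrac{1}{2}\bigl(\min\{N,\ell+n\}-|\ell-n|\bigr)=1+\tfrac{1}{2}\bigl(\min\{\mu(H_{2\alpha_1+\alpha_2}),\sigma(H_{\alpha_2})\}-|\mu(H_{\alpha_2})|\bigr)$, which is the claimed formula. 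I expect the only real friction to be bookkeeping in this translation: keeping the several coroots $H_{\alpha_2},H_{2\alpha_1+\alpha_2},H_{3\alpha_1+\alpha_2}$ straight as elements of $\mathfrak{h}_{so(7,\mathbb{C})}$, and checking that the parity coming from matching $\rho_0$-characters and the parity coming from the $sl(2,\mathbb{C})$ Clebsch--Gordan rule coincide exactly when $\mu(H_{3\alpha_1+\alpha_2})\in\mathbb{Z}$, so that the cardinality simplifies to the closed form with no floor function.
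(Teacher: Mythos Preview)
Your proof is correct and follows essentially the same approach as the paper's: both read off $m(\delta,\lambda)$ from the displayed $\mathfrak{l}'$-decomposition, fix $k=2v-u$, eliminate $j$ via $i+2j=2a+b+c-u$, and reduce to counting the integers $i$ in the interval $[\,|\ell-n|,\min\{N,\ell+n\}\,]$ with the correct parity, then identify the endpoints and the parity condition with the coroot pairings $\mu(H_{2\alpha_1+\alpha_2})$, $\mu(H_{\alpha_2})$, $\sigma(H_{\alpha_2})$ and $\mu(H_{3\alpha_1+\alpha_2})$. Your bookkeeping (in particular $N-(\ell+n)=2(a+c-v)$ and $\mu(H_{3\alpha_1+\alpha_2})-(a+c-v)=2v-u\in\mathbb{Z}$) matches the paper's verification that the two parity constraints coalesce into $\mu(H_{3\alpha_1+\alpha_2})\in\mathbb{Z}$.
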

\begin{proof}
According to the discussion above, $m(\delta,\lambda)\neq0$ if and only if there exist $i,j,k\in\mathbb{N}$ such that $|b-c-i|\leq k\leq b-c+i$, $b-c+i-k\equiv0(\mathrm{mod} 2)$, $k=2v-u$ and $\frac{2a+b+c}{2}-\frac{i}{2}-j=\frac{u}{2}$ if and only if there exist $i,j\in\mathbb{N}$ such that $|b-c+u-2v|\leq i\leq b-c-u+2v$, $b-c+u-2v+i\equiv0(\mathrm{mod} 2)$ and $i+2j=2a+b+c-u$.

Now suppose that $|b-c+u-2v|\leq i\leq b-c-u+2v$, $b-c+u-2v+i\equiv0(\mathrm{mod} 2)$ and $i+2j=2a+b+c-u$ hold for some $i,j\in\mathbb{N}$, then $2a+b+c-u=i+2j\in\mathbb{N}$ and $|b-c+u-2v|\leq i\leq i+2j\leq 2a+b+c-u$. Moreover, $b-c+u-2v+i\equiv0\equiv2j\equiv2a+b+c-u-i(\mathrm{mod} 2)$; hence, $2a+2c-2u+2v\equiv2i\equiv0(\mathrm{mod} 2)$ which is equivalent to $a+c-u+v\in\mathbb{Z}$.

Conversely, suppose that $2a+b+c-u\in\mathbb{N}$, $a+c-u+v\in\mathbb{Z}$ and $|b-c+u-2v|\leq2a+b+c-u$ hold, then we just take $i=|b-c+u-2v|$ and $j=\frac{2a+b+c-u-|b-c+u-2v|}{2}$. Here, $i\leq b-c-u+2v$ because $b-c,2v-u\in\mathbb{N}$. Also, $j$ must be an integer because $a+c-u+v\in\mathbb{Z}$ guarantees that $2a+b+c-u$ and $b-c+u-2v$ have same parity and so do $|b-c+u-2v|$ and $b-c+u-2v$. Thus, $2a+b+c-u-|b-c+u-2v|$ is always an even integer. One can check immediately that $i,j\in\mathbb{N}$, $|b-c+u-2v|\leq i\leq b-c-u+2v$, $b-c+u-2v+i\equiv0(\textrm{mod}2)$ and $i+2j=2a+b+c-u$. This shows that $m(\delta,\lambda)\neq0$ if and only if $2a+b+c-u\in\mathbb{N}$, $a+c-u+v\in\mathbb{Z}$ and $|b-c+u-2v|\leq2a+b+c-u$, which is just $\mu(H_{2\alpha_1+\alpha_2})\in\mathbb{N}$, $\mu(H_{3\alpha_1+\alpha_2})\in\mathbb{Z}$ and $|\mu(H_{\alpha_2})|\leq\mu(H_{2\alpha_1+\alpha_2})$.

We know that $m(\delta,\lambda)=\mathrm{Card}\{(i,j)\in\mathbb{N}^2\mid|b-c+u-2v|\leq i\leq b-c-u+2v,b-c+u-2v+i\equiv0(\mathrm{mod}2),i+2j=2a+b+c-u\}$. By these three conditions, $i$ may only be $|b-c+u-2v|$, $|b-c+u-2v|+2$, $|b-c+u-2v|+4$, $\cdots$, $\min\{2a+b+c-u,b-c-u+2v\}$. Moreover, if $i$ is fixed, there exists a unique $j\in\mathbb{N}$ such that $i+2j=2a+b+c-u$. Hence, $m(\delta,\lambda)$ is just equal to the number of the choices for $i$, i.e. $1+\frac{\min\{2a+b+c-u, b-c-u+2v\}-|b-c+u-2v|}{2}$ which is just $1+\frac{\min\{\mu(H_{2\alpha_1+\alpha_2}),\sigma(H_{\alpha_2})\}-|\mu(H_{\alpha_2})|}{2}$.
\end{proof}
It is not difficult to check that some conditions can be deduced from others, so after cancelling some of them, we only need three conditions: $\delta(H_{\alpha_2})\in\mathbb{N}$, $\mu(H_{3\alpha_1+\alpha_2})\in\mathbb{N}$ and $\mu(H_{3\alpha_1+2\alpha_2})\in\mathbb{N}$.

Apply Theorem 4.1, the decomposition of $M_{\mathfrak{p}_{\{\varepsilon_2-\varepsilon_3\}}}^{so(7,\mathbb{C})}(\lambda)$ in the Grothendieck Group of $\mathcal{O}^{\mathfrak{p}_{\{\alpha_2\}}}$ is obtained immediately.
\begin{proposition}
Let $\lambda\in\mathfrak{h}_{so(7,\mathbb{C})}^*$ satisfying $\lambda(H_{\varepsilon_2-\varepsilon_3})\in\mathbb{N}$ and $\delta\in\mathfrak{h}_{\mathfrak{g}_2}^*$. Denote $\sigma=\mathrm{Res}_{\mathfrak{g}_2}^{so(7,\mathbb{C})}\lambda+\delta$ and $\mu=\mathrm{Res}_{\mathfrak{g}_2}^{so(7,\mathbb{C})}\lambda-\delta$. Then
\begin{center}
$\mathrm{Res}_{\mathfrak{g}_2}^{so(7,\mathbb{C})}M_{\mathfrak{p}_{\{\varepsilon_2-\varepsilon_3\}}}^{so(7,\mathbb{C})}(\lambda)= \displaystyle{\bigoplus_{\mbox{\tiny$\begin{array}{c}\delta(H_{\alpha_2})\in\mathbb{N}\\ \mu(H_{3\alpha_1+\alpha_2})\in\mathbb{N}\\ \mu(H_{3\alpha_1+2\alpha_2})\in\mathbb{N}\end{array}$}}}
(1+\frac{A-|\mu(H_{\alpha_2})|}{2})M_{\mathfrak{p}_{\{\alpha_2\}}}^{\mathfrak{g}_2}(\delta)$
\end{center}
in the Grothendieck Group of $\mathcal{O}^{\mathfrak{p}_{\{\alpha_2\}}}$, where $A=\min\{\mu(H_{2\alpha_1+\alpha_2}),\sigma(H_{\alpha_2})\}$.
\end{proposition}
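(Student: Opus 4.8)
The plan is to combine Theorem 4.1 with the multiplicity formula of Lemma 4.4; the proof is then essentially immediate, the only genuine work being the bookkeeping simplification of the index set already announced in the remark preceding the proposition.

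First I would invoke Theorem 4.1(2). The parabolic $\mathfrak{p}_{\{\varepsilon_2-\varepsilon_3\}}$ is $\mathfrak{g}_2$-compatible by Proposition 3.7, and the hypothesis $\lambda(H_{\varepsilon_2-\varepsilon_3})\in\mathbb{N}$ is exactly the statement $\lambda\in\Lambda^+(\mathfrak{l})$, so Theorem 4.1(2) gives
\[
\mathrm{Res}_{\mathfrak{g}_2}^{so(7,\mathbb{C})}M_{\mathfrak{p}_{\{\varepsilon_2-\varepsilon_3\}}}^{so(7,\mathbb{C})}(\lambda)\cong\bigoplus_{\delta\in\Lambda^+(\mathfrak{l}')}m(\delta,\lambda)M_{\mathfrak{p}_{\{\alpha_2\}}}^{\mathfrak{g}_2}(\delta)
\]
in the Grothendieck group of $\mathcal{O}^{\mathfrak{p}_{\{\alpha_2\}}}$, where $\Lambda^+(\mathfrak{l}')=\{\delta\in\mathfrak{h}_{\mathfrak{g}_2}^*\mid\delta(H_{\alpha_2})\in\mathbb{N}\}$. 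Substituting $m(\delta,\lambda)$ from Lemma 4.4 and abbreviating $A=\min\{\mu(H_{2\alpha_1+\alpha_2}),\sigma(H_{\alpha_2})\}$, the $\delta$-summand equals $(1+\tfrac{A-|\mu(H_{\alpha_2})|}{2})M_{\mathfrak{p}_{\{\alpha_2\}}}^{\mathfrak{g}_2}(\delta)$ precisely when $\mu(H_{2\alpha_1+\alpha_2})\in\mathbb{N}$, $\mu(H_{3\alpha_1+\alpha_2})\in\mathbb{Z}$ and $|\mu(H_{\alpha_2})|\le\mu(H_{2\alpha_1+\alpha_2})$, and is zero otherwise; a zero summand being trivial in the Grothendieck group, the vanishing terms are dropped.

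Next I would carry out the condition simplification. The claim is that, for $\delta\in\Lambda^+(\mathfrak{l}')$, the three conditions above are equivalent to $\mu(H_{3\alpha_1+\alpha_2})\in\mathbb{N}$ together with $\mu(H_{3\alpha_1+2\alpha_2})\in\mathbb{N}$. I would base this on the two co-root identities in $\mathfrak{g}_2$,
\[
H_{2\alpha_1+\alpha_2}=H_{3\alpha_1+\alpha_2}+H_{3\alpha_1+2\alpha_2},\qquad H_{\alpha_2}=H_{3\alpha_1+2\alpha_2}-H_{3\alpha_1+\alpha_2},
\]
obtained from the explicit co-roots in Section 2. If $\mu(H_{3\alpha_1+\alpha_2}),\mu(H_{3\alpha_1+2\alpha_2})\in\mathbb{N}$, then the first identity gives $\mu(H_{2\alpha_1+\alpha_2})\in\mathbb{N}$, the second gives $\mu(H_{\alpha_2})\in\mathbb{Z}$ with $|\mu(H_{\alpha_2})|\le\mu(H_{3\alpha_1+\alpha_2})+\mu(H_{3\alpha_1+2\alpha_2})=\mu(H_{2\alpha_1+\alpha_2})$, and $\mu(H_{3\alpha_1+\alpha_2})\in\mathbb{N}\subseteq\mathbb{Z}$ is clear. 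Conversely, the two identities give $\mu(H_{3\alpha_1+\alpha_2})=\tfrac12(\mu(H_{2\alpha_1+\alpha_2})-\mu(H_{\alpha_2}))$ and $\mu(H_{3\alpha_1+2\alpha_2})=\tfrac12(\mu(H_{2\alpha_1+\alpha_2})+\mu(H_{\alpha_2}))$, both nonnegative by the bound $|\mu(H_{\alpha_2})|\le\mu(H_{2\alpha_1+\alpha_2})$; moreover $\mu(H_{3\alpha_1+\alpha_2})\in\mathbb{Z}$ is assumed and $\mu(H_{3\alpha_1+2\alpha_2})=\mu(H_{2\alpha_1+\alpha_2})-\mu(H_{3\alpha_1+\alpha_2})$ is a difference of integers, so both lie in $\mathbb{N}$. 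Feeding this equivalence into the displayed decomposition yields the proposition.

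I do not expect a serious obstacle: the substantive input is already supplied by Theorem 4.1 and Lemma 4.4. The one step meriting attention is the index-set matching of the preceding paragraph — one must verify that the summation range $\Lambda^+(\mathfrak{l}')$ of Theorem 4.1, intersected with the support of $m(\cdot,\lambda)$, is exactly the set cut out by $\delta(H_{\alpha_2})\in\mathbb{N}$, $\mu(H_{3\alpha_1+\alpha_2})\in\mathbb{N}$, $\mu(H_{3\alpha_1+2\alpha_2})\in\mathbb{N}$, and that deleting multiplicity-zero terms is legitimate in the Grothendieck group.
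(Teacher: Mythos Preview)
Your proposal is correct and follows exactly the paper's approach: the paper's proof of Proposition 4.5 is the one-line ``By Theorem 4.1 and Lemma 4.4, the conclusion is proved,'' together with the remark immediately preceding the proposition that the support conditions of Lemma 4.4 simplify to $\delta(H_{\alpha_2})\in\mathbb{N}$, $\mu(H_{3\alpha_1+\alpha_2})\in\mathbb{N}$, $\mu(H_{3\alpha_1+2\alpha_2})\in\mathbb{N}$. Your co-root identity argument explicitly carries out this simplification that the paper leaves to the reader, and the verification is correct.
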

\begin{proof}
By Theorem 4.1 and Lemma 4.4, the conclusion is proved.
\end{proof}

\begin{center}
\textbf{4.4 $M_{\mathfrak{p}_{\{\varepsilon_1-\varepsilon_2,\varepsilon_3\}}}^{so(7,\mathbb{C})}(\lambda)$}
\end{center}

We turn to the last case. In this case $\mathfrak{p}=\mathfrak{p}_{\{\varepsilon_1-\varepsilon_2,\varepsilon_3\}}$, $\mathfrak{p}'=\mathfrak{p}_{\{\alpha_1\}}$, $\mathfrak{l}=\mathfrak{h}_{so(7,\mathbb{C})}\oplus\mathbb{C}X_{\varepsilon_1-\varepsilon_2}\oplus\mathbb{C}X_{\varepsilon_2-\varepsilon_1}\oplus\mathbb{C}
X_{\varepsilon_3}\oplus\mathbb{C}X_{\varepsilon_{-3}}=\mathbb{C}H_{\varepsilon_1+\varepsilon_2}\oplus \mathrm{Span}_\mathbb{C}\{H_{\varepsilon_1-\varepsilon_2},X_{\varepsilon_1-\varepsilon_2},X_{\varepsilon_2-\varepsilon_1}\}\oplus \mathrm{Span}_\mathbb{C}\{H_{\varepsilon_3},X_{\varepsilon_3},X_{-\varepsilon_3}\}\cong\mathbb{C}\oplus sl(2,\mathbb{C})\oplus sl(2,\mathbb{C})$ as direct sum of ideals, and $\mathfrak{l}'=\mathfrak{h}_{\mathfrak{g}_2}\oplus\mathbb{C}X_{\alpha_1}\oplus\mathbb{C}X_{-\alpha_1}=\mathbb{C}H_{3\alpha_1+2\alpha_2}\oplus \mathrm{Span}_\mathbb{C}\{H_{\alpha_1},X_{\alpha_1},X_{\alpha_{-1}}\}\cong gl(2,\mathbb{C})$ given by $-H_{3\alpha_1+2\alpha_2}\to\left(\begin{array}{cc}1&0\\0&1\end{array}\right)$, $H_{\alpha_1}\to\left(\begin{array}{cc}1&0\\0&-1\end{array}\right)$, $X_{\alpha_1}\to\left(\begin{array}{cc}0&1\\0&0\end{array}\right)$, and $X_{-\alpha_1}\to\left(\begin{array}{cc}0&0\\1&0\end{array}\right)$. In particular, $\mathrm{Span}_\mathbb{C}\{H_{\alpha_1},X_{\alpha_1},X_{-\alpha_1}\}\cong sl(2,\mathbb{C})$ and $\mathbb{C}H_{3\alpha_1+2\alpha_2}$ is the center of $\mathfrak{l}'$. According to the construction in Section 2, $\mathfrak{l}'\cong\mathbb{C}\oplus sl(2,\mathbb{C})$ is embedded into $\mathfrak{l}\cong\mathbb{C}\oplus sl(2,\mathbb{C})\oplus sl(2,\mathbb{C})$ as follows: $\mathbb{C}\hookrightarrow\mathbb{C}$ and $sl(2,\mathbb{C})$ is embedded into $sl(2,\mathbb{C})\oplus sl(2,\mathbb{C})$ diagonally.

To avoid the similar calculation as in Section 4.3, we omit details of computation.

Let $\lambda=a\varepsilon_1+b\varepsilon_2+c\varepsilon_3$ for some $a,b,c\in\mathbb{C}$. Here, we require $\lambda$ to be $[\mathfrak{l},\mathfrak{l}]$-dominant integral, so $\lambda(H_{\varepsilon_1-\varepsilon_2})=\lambda(H_1-H_2)=a-b\in\mathbb{N}$ and $\lambda(H_{\varepsilon_3})=\lambda(2H_3)=2c\in\mathbb{N}$. Retain the notations $F(n)$ and $\rho_0$ in Section 4.3 for $\mathfrak{l}'$ now. We have $\mathfrak{u}_-/\mathfrak{u}_-\cap\mathfrak{g}_2=\mathrm{Span}_\mathbb{C}\{v_{-\alpha_1-\alpha_2},v_{-2\alpha_1-\alpha_2}\}$, and $\mathrm{Res}_{\mathfrak{l}'}^\mathfrak{l}F_\lambda\otimes S(\mathfrak{u}_-/\mathfrak{u}_-\cap\mathfrak{g}_2)\cong\displaystyle{\bigoplus_{k\in\mathbb{N}}\bigoplus_{\mbox{\tiny$\begin{array}{c}|a-b-2c|\leq i\leq a-b+2c\\a-b+2c-i\equiv0(\mathrm{mod} 2)\end{array}$}}\bigoplus_{\mbox{\tiny$\begin{array}{c}|i-k|\leq j\leq i+k\\i+k-j\equiv0(\mathrm{mod} 2)\end{array}$}}}F(j)\otimes(\frac{k-a-b}{2})\rho_0$.

Now let $\delta=u\alpha_1+v\alpha_2$ for some $u,v\in\mathbb{C}$. Here, we require $\delta$ to be $[\mathfrak{l}',\mathfrak{l}']$-dominant integral, so $\delta(H_{\alpha_1})=2u-3v\in\mathbb{N}$. And since $\delta(-H_{3\alpha_1+2\alpha_2})=-v$, $F'_\delta\cong F(2u-3v)\otimes(-\frac{v}{2})\rho_0$.
\begin{lemma}
With the notations above, and let $\mu=\mathrm{Res}_{\mathfrak{g}_2}^{so(7,\mathbb{C})}\lambda-\delta$. Then $m(\delta,\lambda)\neq0$ if and only if $\mu(H_{3\alpha_1+2\alpha_2})\in\mathbb{N}$, $\mu(H_{3\alpha_1+\alpha_2})\in\mathbb{Z}$, and $\max\{|\mu(\\H_{3\alpha_1+2\alpha_2})-\delta(H_{\alpha_1})|,|\lambda(H_{\varepsilon_1-\varepsilon_2}-H_{\varepsilon_3})|\}\leq
\min\{\mu(H_{3\alpha_1+2\alpha_2})+\delta(H_{\alpha_1}),\lambda(H_{\varepsilon_1-\varepsilon_2}+H_{\varepsilon_3})\}$. In this case, \[m(\delta,\lambda)=1+\frac{B-C}{2},\] where \[B=\min\{\mu(H_{3\alpha_1+2\alpha_2})+\delta(H_{\alpha_1}),\lambda(H_{\varepsilon_1-\varepsilon_2}+H_{\varepsilon_3})\},\] \[C=\max\{|\mu(H_{3\alpha_1+2\alpha_2})\\-\delta(H_{\alpha_1})|,|\lambda(H_{\varepsilon_1-\varepsilon_2}-H_{\varepsilon_3})|\};\]
otherwise, $m(\delta,\lambda)=0$.
\end{lemma}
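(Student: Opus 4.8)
The plan is to follow the pattern of the proof of Lemma 4.4: convert the Hom-space dimension into a lattice-point count via Schur's lemma, then re-express the count in terms of co-root pairings. Recall that $m(\delta,\lambda)=\dim_{\mathbb C}\hom_{\mathfrak l'}\big(F'_\delta,\ \mathrm{Res}_{\mathfrak l'}^{\mathfrak l}F_\lambda\otimes S(\mathfrak u_-/\mathfrak u_-\cap\mathfrak g_2)\big)$, and that both $F'_\delta\cong F(2u-3v)\otimes(-\tfrac v2)\rho_0$ and the tensor product in question have been written, in the two displays preceding the statement, as direct sums of modules of the form $F(j)\otimes t\rho_0$. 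Each such module is a simple $\mathfrak l'\cong gl(2,\mathbb C)$-module, and $F(j)\otimes t\rho_0\cong F(j')\otimes t'\rho_0$ exactly when $j=j'$ and $t=t'$; hence by Schur's lemma $m(\delta,\lambda)$ is the number of triples $(i,j,k)$ in the displayed index set with $j=2u-3v$ and $\tfrac{k-a-b}{2}=-\tfrac v2$. The second equation forces $k=a+b-v$; if this is not a nonnegative integer no triple survives and $m(\delta,\lambda)=0$, so assume $k:=a+b-v\in\mathbb N$ and put $j:=2u-3v$, which lies in $\mathbb N$ by the standing $[\mathfrak l',\mathfrak l']$-dominance hypothesis on $\delta$. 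It then remains to count the integers $i\ge 0$ satisfying simultaneously $|a-b-2c|\le i\le a-b+2c$ with $a-b+2c-i$ even, and $|j-k|\le i\le j+k$ with $j+k-i$ even --- the latter obtained from $|i-k|\le j\le i+k$, $i+k-j$ even, by the symmetry of the Clebsch--Gordan rule in $i,j,k$.

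Each parity condition pins $i$ to a fixed residue mod $2$, and the two are consistent precisely when $a-b+2c\equiv j+k\pmod 2$ (all three quantities being integers by the dominance hypotheses on $\lambda$ and the assumption $k\in\mathbb N$). Since $(a-b+2c)-(j+k)=2(-b+c-u+2v)$, consistency is equivalent to $-b+c-u+2v\in\mathbb Z$, which, given $k=a+b-v\in\mathbb Z$, is the same as $a+c-u+v\in\mathbb Z$. When the parities agree, the admissible $i$ form the arithmetic progression $C,\,C+2,\,\dots,\,B$ with $C=\max\{|a-b-2c|,\,|j-k|\}$ and $B=\min\{a-b+2c,\,j+k\}$; this set is nonempty iff $C\le B$, in which case it has $1+\tfrac{B-C}{2}$ elements, and otherwise --- or whenever the parities clash --- the count is $0$.

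Finally I would carry out the translation into co-root pairings, exactly as was done for Lemma 4.2: one has $\mu(H_{3\alpha_1+2\alpha_2})=a+b-v=k$, $\mu(H_{3\alpha_1+\alpha_2})=a+c-u+v$, $\delta(H_{\alpha_1})=2u-3v=j$, and $\lambda(H_{\varepsilon_1-\varepsilon_2}\pm H_{\varepsilon_3})=(a-b)\pm 2c$. Thus $k\in\mathbb N$ reads $\mu(H_{3\alpha_1+2\alpha_2})\in\mathbb N$, the parity-consistency condition reads $\mu(H_{3\alpha_1+\alpha_2})\in\mathbb Z$, and $C$ and $B$ become the two expressions in the statement; the claimed formula follows. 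I expect the only genuine work to be this bookkeeping: keeping the co-root normalisations for the long roots $3\alpha_1+\alpha_2$ and $3\alpha_1+2\alpha_2$ of $\mathfrak g_2$ straight, and verifying that the two mod-$2$ constraints collapse to the single integrality statement $\mu(H_{3\alpha_1+\alpha_2})\in\mathbb Z$. There is no conceptual obstacle; the argument is a direct transcription of the proof of Lemma 4.4.
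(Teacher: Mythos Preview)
Your proposal is correct and follows exactly the approach the paper intends: the paper's own proof consists of the single sentence ``The proof is similar to that of Lemma 4.4,'' and you have carried out that similarity in full detail. Your bookkeeping---identifying $k=a+b-v=\mu(H_{3\alpha_1+2\alpha_2})$, $j=2u-3v=\delta(H_{\alpha_1})$, reducing the two parity constraints to $\mu(H_{3\alpha_1+\alpha_2})\in\mathbb Z$ via $(a-b+2c)-(j+k)=2(-b+c-u+2v)$, and translating the Clebsch--Gordan bounds into $B$ and $C$---is all accurate.
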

\begin{proof}
The proof is similar to that of Lemma 4.4.
\end{proof}

Apply Theorem 4.1, the decomposition of $M_{\mathfrak{p}_{\{\varepsilon_1-\varepsilon_2,\varepsilon_3\}}}^{so(7,\mathbb{C})}(\lambda)$ in the Grothendie-\\ck Group of $\mathcal{O}^{\mathfrak{p}_{\{\alpha_1\}}}$ is obtained immediately.
\begin{proposition}
Let $\lambda\in\mathfrak{h}_{so(7,\mathbb{C})}^*$ satisfying $\lambda(H_{\varepsilon_1-\varepsilon_2}),\lambda(H_{\varepsilon_3})\in\mathbb{N}$ and $\delta\in\mathfrak{h}_{\mathfrak{g}_2}^*$. Denote $\mu=\mathrm{Res}_{\mathfrak{g}_2}^{so(7,\mathbb{C})}\lambda-\delta$. Then
\begin{center}
$\mathrm{Res}_{\mathfrak{g}_2}^{so(7,\mathbb{C})}M_{\mathfrak{p}_{\{\varepsilon_1-\varepsilon_2,\varepsilon_3\}}}^{so(7,\mathbb{C})}(\lambda)
=\displaystyle{\bigoplus_{\mbox{\tiny$\begin{array}{c}\mu(H_{3\alpha_1+2\alpha_2})\in\mathbb{N}\\ \mu(H_{3\alpha_1+\alpha_2})\in\mathbb{Z}\\C\leq B\end{array}$}}}(1+\frac{B-C}{2})M_{\mathfrak{p}_{\{\alpha_1\}}}^{\mathfrak{g}_2}(\delta)$
\end{center}
in the Grothendieck Group of $\mathcal{O}^{\mathfrak{p}_{\{\alpha_1\}}}$, where
\begin{center}
$B=\min\{\mu(H_{3\alpha_1+2\alpha_2})+\delta(H_{\alpha_1}),\lambda(H_{\varepsilon_1-\varepsilon_2}+H_{\varepsilon_3})\}$,\\
$C=\max\{|\mu(H_{3\alpha_1+2\alpha_2})-\delta(H_{\alpha_1})|,|\lambda(H_{\varepsilon_1-\varepsilon_2}-H_{\varepsilon_3})|\}$.
\end{center}
\end{proposition}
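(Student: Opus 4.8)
The plan is to obtain Proposition 4.7 as the direct specialization of Theorem 4.1(2) to the parabolic $\mathfrak{p}=\mathfrak{p}_{\{\varepsilon_1-\varepsilon_2,\varepsilon_3\}}$. By Proposition 3.7 this $\mathfrak{p}$ is $\mathfrak{g}_2$-compatible, so Theorem 4.1 applies; moreover the hypothesis $\lambda(H_{\varepsilon_1-\varepsilon_2}),\lambda(H_{\varepsilon_3})\in\mathbb{N}$ is exactly the condition $\lambda\in\Lambda^+(\mathfrak{l})$ for $\mathfrak{l}\cong\mathbb{C}\oplus sl(2,\mathbb{C})\oplus sl(2,\mathbb{C})$ identified at the start of Section 4.4. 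Theorem 4.1(2) then gives, in the Grothendieck group of $\mathcal{O}^{\mathfrak{p}_{\{\alpha_1\}}}$, the isomorphism $\mathrm{Res}_{\mathfrak{g}_2}^{so(7,\mathbb{C})}M_\mathfrak{p}^{so(7,\mathbb{C})}(\lambda)\cong\bigoplus_{\delta\in\Lambda^+(\mathfrak{l}')}m(\delta;\lambda)\,M_{\mathfrak{p}'}^{\mathfrak{g}_2}(\delta)$, where $\mathfrak{p}'=\mathfrak{p}\cap\mathfrak{g}_2=\mathfrak{p}_{\{\alpha_1\}}$. It then remains only to insert the explicit value of $m(\delta;\lambda)$ supplied by Lemma 4.6.

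Concretely I would proceed in three short steps. First, recall from Section 4.4 that $\mathfrak{l}'\cong gl(2,\mathbb{C})$ and that $\Lambda^+(\mathfrak{l}')$ is parametrized by the condition $\delta(H_{\alpha_1})\in\mathbb{N}$. Second, invoke Lemma 4.6: with $\mu=\mathrm{Res}_{\mathfrak{g}_2}^{so(7,\mathbb{C})}\lambda-\delta$ and $B,C$ as displayed there, one has $m(\delta;\lambda)=1+(B-C)/2$ precisely when $\mu(H_{3\alpha_1+2\alpha_2})\in\mathbb{N}$, $\mu(H_{3\alpha_1+\alpha_2})\in\mathbb{Z}$ and $C\le B$, and $m(\delta;\lambda)=0$ otherwise. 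Third, substitute this into the sum from Theorem 4.1(2) and discard the vanishing summands; the surviving index set is exactly the one displayed under the direct sum in the statement.

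The one point requiring a genuine (if brief) check is that the three conditions appearing under the direct sum already force $\delta\in\Lambda^+(\mathfrak{l}')$, i.e. $\delta(H_{\alpha_1})\in\mathbb{N}$, so that no separate dominance hypothesis on $\delta$ has to be carried along (in contrast to Proposition 4.5, where $\delta(H_{\alpha_2})\in\mathbb{N}$ is kept explicit). For non-negativity, $C\le B$ forces in particular $|\mu(H_{3\alpha_1+2\alpha_2})-\delta(H_{\alpha_1})|\le\mu(H_{3\alpha_1+2\alpha_2})+\delta(H_{\alpha_1})$; since $\mu(H_{3\alpha_1+2\alpha_2})\ge 0$ this squares to $\mu(H_{3\alpha_1+2\alpha_2})\,\delta(H_{\alpha_1})\ge 0$, hence $\delta(H_{\alpha_1})\ge 0$ (the case $\mu(H_{3\alpha_1+2\alpha_2})=0$ being immediate). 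For integrality, a routine linear-algebra identity expresses $\delta(H_{\alpha_1})$ as an integral combination of $\lambda(H_{\varepsilon_1-\varepsilon_2})$, $\lambda(H_{\varepsilon_3})$, $\mu(H_{3\alpha_1+\alpha_2})$ and $\mu(H_{3\alpha_1+2\alpha_2})$, all of which are integers under the hypotheses, so $\delta(H_{\alpha_1})\in\mathbb{Z}$ and therefore $\in\mathbb{N}$.

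I do not expect any serious obstacle. All the representation-theoretic content, namely the $gl(2,\mathbb{C})$-branching through the diagonal embedding $sl(2,\mathbb{C})\hookrightarrow sl(2,\mathbb{C})\oplus sl(2,\mathbb{C})$ and the two successive Littlewood--Richardson decompositions it entails, has already been absorbed into Lemma 4.6, whose proof runs parallel to that of Lemma 4.4. The proposition is then a purely formal consequence of Theorem 4.1, and the only labour is the bookkeeping of the index set described above, exactly as in the proofs of Propositions 4.3 and 4.5.
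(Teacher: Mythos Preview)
Your proposal is correct and follows essentially the same route as the paper: the proof there is the one-line ``By Theorem 4.1 and Lemma 4.6, the conclusion is proved,'' supplemented by Remark 4.8, which carries out exactly the check you describe in your third step, namely that the three index conditions already force $\delta(H_{\alpha_1})\in\mathbb{N}$ via the identity $\delta(H_{\alpha_1})=\mu(H_{3\alpha_1+2\alpha_2})-2\mu(H_{3\alpha_1+\alpha_2})+\lambda(H_{\varepsilon_1-\varepsilon_2})+\lambda(H_{\varepsilon_3})$ for integrality and the inequality $|\mu(H_{3\alpha_1+2\alpha_2})-\delta(H_{\alpha_1})|\le\mu(H_{3\alpha_1+2\alpha_2})+\delta(H_{\alpha_1})$ for non-negativity.
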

\begin{remark}
There is no need for us to add the condition $\delta(H_{\alpha_1})=2u-3v\in\mathbb{N}$ under $\bigoplus$ because the three conditions already imply it. In fact, since $a+b-v\in\mathbb{N}$, $a+c-u+v\in\mathbb{Z}$ and $a-b,2c\in\mathbb{N}$, $2u-3v=(a+b-v)-2(a+c-u+v)+(a-b)+2c\in\mathbb{Z}$. And $|a+b-2u+2v|\leq a+b+2u-4v$ implies $2u-3v\geq0$.
\end{remark}
\begin{proof}
By Theorem 4.1 and Lemma 4.6, the conclusion is proved.
\end{proof}

\section{Decomposition as $\mathfrak{g}_2$-module}

\begin{center}
\textbf{5.1 Some basic results in generalized BGG category $\mathcal{O}^\mathfrak{p}$}
\end{center}

In this section, we list some results in generalized BGG category $\mathcal{O}^\mathfrak{p}$, which will be useful in the next three sections.

Let $\mathfrak{g}$ be a complex semi-simple Lie algebra, and let $\mathfrak{h}_\mathfrak{g}$ be a Cartan subalgebra with dual space $\mathfrak{h}_\mathfrak{g}^*$.
\begin{proposition}
Let $\mathfrak{p}=\mathfrak{p}_\Pi$ for some $\Pi\subseteq\Delta(\mathfrak{g})$. Assume that $\lambda\in\mathfrak{h}_\mathfrak{g}^*$ is $[\mathfrak{l},\mathfrak{l}]$-dominant integral. If $\frac{2(\lambda+\rho(\mathfrak{g}),\beta)}{(\beta,\beta)}\notin\mathbb{Z}^+$ for all $\beta\in\Phi^+(\mathfrak{g})-\mathbb{Z}\Pi$, then $M_\mathfrak{p}^\mathfrak{g}(\lambda)$ is simple.
\end{proposition}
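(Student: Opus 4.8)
The plan is to reduce the simplicity of $M_\mathfrak{p}^\mathfrak{g}(\lambda)$ to the classical Jantzen simplicity criterion for generalized Verma modules. Recall that a generalized Verma module $M_\mathfrak{p}^\mathfrak{g}(\lambda)$ is always a quotient of the ordinary Verma module $M_{\mathfrak{b}_\mathfrak{g}}^\mathfrak{g}(\lambda)$, and it is simple precisely when $M_{\mathfrak{b}_\mathfrak{g}}^\mathfrak{g}(\lambda)$ and $M_\mathfrak{p}^\mathfrak{g}(\lambda)$ have the ``same'' composition data restricted to weights not killed by the quotient; more usefully, there is an explicit criterion (due to Jantzen, see for instance Jantzen's work on contravariant forms, or the treatment in the BGG category $\mathcal{O}$ literature) saying that $M_\mathfrak{p}^\mathfrak{g}(\lambda)$ is simple if for every $\beta\in\Phi^+(\mathfrak{g})\setminus\Phi^+(\mathfrak{l})$ one has $\langle\lambda+\rho(\mathfrak{g}),\beta^\vee\rangle\notin\mathbb{Z}^+$. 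Since $\Phi^+(\mathfrak{l})=\mathbb{N}\Pi\cap\Phi^+(\mathfrak{g})$ for $\mathfrak{p}=\mathfrak{p}_\Pi$, the roots appearing in the hypothesis, namely $\beta\in\Phi^+(\mathfrak{g})-\mathbb{Z}\Pi$, are exactly those $\beta\in\Phi^+(\mathfrak{g})$ not lying in the root system of $\mathfrak{l}$. So the first step is simply to identify $\Phi^+(\mathfrak{g})-\mathbb{Z}\Pi$ with $\Phi^+(\mathfrak{g})\setminus\Phi^+(\mathfrak{l})$ and observe that $\tfrac{2(\lambda+\rho(\mathfrak{g}),\beta)}{(\beta,\beta)}=\langle\lambda+\rho(\mathfrak{g}),\beta^\vee\rangle$ is precisely the pairing appearing in the simplicity criterion.

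Concretely, I would argue as follows. If $M_\mathfrak{p}^\mathfrak{g}(\lambda)$ were not simple, it would have a proper nonzero submodule, hence (being an object of $\mathcal{O}^\mathfrak{p}$ with the same infinitesimal character block structure as $\mathcal{O}$) a nonzero homomorphism $M_\mathfrak{p}^\mathfrak{g}(\mu)\to M_\mathfrak{p}^\mathfrak{g}(\lambda)$ from another generalized Verma module with $\mu<\lambda$, $\mu\in W_\mathfrak{g}\cdot\lambda$ (linkage). By the universal property, this lifts to a nonzero map $M_{\mathfrak{b}_\mathfrak{g}}^\mathfrak{g}(\mu)\to M_{\mathfrak{b}_\mathfrak{g}}^\mathfrak{g}(\lambda)$, and by the BGG theorem $\mu\uparrow\lambda$, i.e. there is a chain $\lambda=\mu_0>\mu_1>\cdots>\mu_t=\mu$ with $\mu_{i}=s_{\beta_i}\cdot\mu_{i-1}$ for positive roots $\beta_i$ and $\langle\mu_{i-1}+\rho(\mathfrak{g}),\beta_i^\vee\rangle\in\mathbb{Z}^+$. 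The point is that the first reflection $\beta_1$ in this chain cannot be a root of $\mathfrak{l}$: if $\beta_1\in\Phi^+(\mathfrak{l})$, then $s_{\beta_1}\cdot\lambda$ is strictly lower in the $\mathfrak{l}$-dominance order while $\lambda$ is $[\mathfrak{l},\mathfrak{l}]$-dominant integral, which would force $\langle\lambda+\rho(\mathfrak{g}),\beta_1^\vee\rangle\le 0$ (actually one must be a bit careful and use $[\mathfrak{l},\mathfrak{l}]$-dominance of $\lambda$ together with the fact that $\rho(\mathfrak{g})$ restricted to the coroot lattice of $\mathfrak{l}$ equals $\rho(\mathfrak{l})$), contradicting $\langle\lambda+\rho(\mathfrak{g}),\beta_1^\vee\rangle\in\mathbb{Z}^+$. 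Hence $\beta_1\in\Phi^+(\mathfrak{g})\setminus\Phi^+(\mathfrak{l})=\Phi^+(\mathfrak{g})-\mathbb{Z}\Pi$, and $\langle\lambda+\rho(\mathfrak{g}),\beta_1^\vee\rangle\in\mathbb{Z}^+$ directly contradicts the hypothesis.

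An alternative, perhaps cleaner, route is to invoke the known simplicity criterion for generalized Verma modules directly: $M_\mathfrak{p}^\mathfrak{g}(\lambda)$ with $\lambda$ being $[\mathfrak{l},\mathfrak{l}]$-dominant integral is simple if and only if $M_{\mathfrak{b}_\mathfrak{g}}^\mathfrak{g}(\lambda)$ and $M_\mathfrak{p}^\mathfrak{g}(\lambda)$ are ``close'' in the sense that no composition factor $L(\mu)$ of $M_{\mathfrak{b}_\mathfrak{g}}^\mathfrak{g}(\lambda)$ with $\mu\ne\lambda$ survives in the quotient; this is controlled exactly by the vanishing of the pairings $\langle\lambda+\rho(\mathfrak{g}),\beta^\vee\rangle$ for $\beta\notin\Phi^+(\mathfrak{l})$ lying in $\mathbb{Z}^+$. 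Under that criterion one just observes $\frac{2(\lambda+\rho(\mathfrak{g}),\beta)}{(\beta,\beta)}\notin\mathbb{Z}^+$ for all such $\beta$ is exactly the hypothesis, so we are done.

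The main obstacle is the bookkeeping in the second step: one has to be genuinely careful that the first ``bad'' reflection in a BGG chain is forced out of $\Phi^+(\mathfrak{l})$, which uses that $\lambda$ is $[\mathfrak{l},\mathfrak{l}]$-dominant \emph{integral} — this is why the dominance hypothesis appears — together with the compatibility $\langle\rho(\mathfrak{g}),\beta^\vee\rangle=\langle\rho(\mathfrak{l}),\beta^\vee\rangle=1$ for simple roots $\beta$ of $\mathfrak{l}$. Once that is set up, the rest is formal: linkage plus the translation of the integrality condition. I would also remark that this proposition is essentially standard (it is the ``generic parameter'' sufficient condition for simplicity of parabolic Verma modules), so in the writeup I would keep the argument short and cite the Jantzen criterion rather than reprove linkage from scratch.
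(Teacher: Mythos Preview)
Your ``alternative, cleaner route'' is exactly what the paper does: it simply cites Theorem~9.12 of Humphreys' \emph{Representations of Semisimple Lie Algebras in the BGG Category $\mathcal{O}$} and stops. So on that front you and the paper agree.

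Your ``concrete'' argument, however, has a genuine gap. The claim that the first reflection $\beta_1$ in a BGG chain $\lambda=\mu_0>\mu_1>\cdots>\mu_t=\mu$ cannot lie in $\Phi^+(\mathfrak{l})$ is false as written. If $\lambda$ is $[\mathfrak{l},\mathfrak{l}]$-dominant integral and $\beta_1\in\Phi^+(\mathfrak{l})$, then $\langle\lambda+\rho(\mathfrak{g}),\beta_1^\vee\rangle=\langle\lambda,\beta_1^\vee\rangle+\langle\rho(\mathfrak{g}),\beta_1^\vee\rangle\ge 1$, so this pairing \emph{is} in $\mathbb{Z}^+$; it is not forced $\le 0$ as you assert. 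The intermediate weights $\mu_i$ in a BGG chain need not be $[\mathfrak{l},\mathfrak{l}]$-dominant --- only the endpoint $\mu$ must be (since $L(\mu)$ lives in $\mathcal{O}^{\mathfrak{p}}$) --- so nothing prevents the chain from beginning with several reflections inside $\Phi^+(\mathfrak{l})$ before eventually exiting. A correct direct argument must show that, under the hypothesis, $\lambda$ is the only $[\mathfrak{l},\mathfrak{l}]$-dominant integral weight $\le\lambda$ in its dot-orbit $W_\mathfrak{g}\cdot\lambda$; this is essentially the content of the Jantzen criterion and is not a one-line consequence of the BGG theorem. So in your writeup, drop the chain argument and just cite the reference, as the paper does.
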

\begin{proof}
See Theorem 9.12 [\textbf{Hu2}].
\end{proof}
\begin{definition}
We say that $\mu$ is linked to $\nu$ if $\mu-\nu\in\mathbb{Z}\Phi(\mathfrak{g})$ and $\mu=\omega(\nu+\rho)-\rho$ for some $\omega\in W_\mathfrak{g}$; in other words, $\mu$ and $\nu$ lie in the same linkage class.
\end{definition}
\begin{remark}
Linkage class is an equivalence class. Moreover, the weights in a same linkage class correspond to a same infinitesimal character.
\end{remark}
\begin{proposition}
There exists a unique anti-dominant weight in each linkage class. Moreover, if $\lambda_1$ and $\lambda_2$ lie in the different linkage classes, then $M_\mathfrak{p}^\mathfrak{g}(\lambda_1)$ and $M_\mathfrak{p}^\mathfrak{g}(\lambda_2)$ have no non-split extension in $\mathcal{O}^\mathfrak{p}$.
\end{proposition}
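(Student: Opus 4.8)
The plan is to establish the two assertions separately, in each case by a standard argument from the structure theory of category $\mathcal{O}$.

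For the first assertion I would begin by recalling that $\mu$ is anti-dominant when $(\mu+\rho(\mathfrak{g}))(H_\beta)\notin\mathbb{Z}^+$ for every $\beta\in\Phi^+(\mathfrak{g})$, and by observing that the linkage class of a weight $\nu$ is a subset of the finite set $\{\omega(\nu+\rho(\mathfrak{g}))-\rho(\mathfrak{g}):\omega\in W_\mathfrak{g}\}$, hence finite. It therefore has a minimal element $\mu$ for the order in which $\mu'\le\mu$ means $\mu-\mu'\in\sum_{\alpha\in\Delta(\mathfrak{g})}\mathbb{N}\alpha$, and I claim such a $\mu$ is anti-dominant: if $(\mu+\rho(\mathfrak{g}))(H_\beta)\in\mathbb{Z}^+$ for some $\beta\in\Phi^+(\mathfrak{g})$, then $s_\beta\cdot\mu=\mu-(\mu+\rho(\mathfrak{g}))(H_\beta)\,\beta$ differs from $\mu$ by an integral multiple of $\beta$, so it still lies in the linkage class, yet it is strictly smaller than $\mu$, contradicting minimality. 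For the uniqueness I would invoke the standard Weyl-group fact that an anti-dominant $\mu$ satisfies $\mu\le\omega\cdot\mu$ for every $\omega$ in the integral Weyl group of $\mu$ (an induction on length using the simple reflections of the integral root system): applied to two anti-dominant members $\mu_1,\mu_2$ of one linkage class it gives $\mu_1\le\mu_2$ and $\mu_2\le\mu_1$, hence $\mu_1=\mu_2$. This whole paragraph may instead be replaced by a citation to the treatment of anti-dominant weights in Humphreys' book on category $\mathcal{O}$.

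For the second assertion the key observation is that weights in different linkage classes are separated by at least one of two invariants that are constant on every indecomposable object of $\mathcal{O}^\mathfrak{p}$: the infinitesimal character, and the coset of $\mathfrak{h}_\mathfrak{g}^*/\mathbb{Z}\Phi(\mathfrak{g})$ containing all of its weights. Indeed, if $\lambda_1$ and $\lambda_2$ lie in different linkage classes then either $\lambda_1+\rho(\mathfrak{g})$ and $\lambda_2+\rho(\mathfrak{g})$ are not $W_\mathfrak{g}$-conjugate, in which case $M_\mathfrak{p}^\mathfrak{g}(\lambda_1)$ and $M_\mathfrak{p}^\mathfrak{g}(\lambda_2)$ carry distinct infinitesimal characters by the Harish-Chandra isomorphism together with Remark 5.3, or else they are $W_\mathfrak{g}$-conjugate but $\lambda_1-\lambda_2\notin\mathbb{Z}\Phi(\mathfrak{g})$, in which case the weights of $M_\mathfrak{p}^\mathfrak{g}(\lambda_1)$ and those of $M_\mathfrak{p}^\mathfrak{g}(\lambda_2)$ lie in two distinct cosets modulo $\mathbb{Z}\Phi(\mathfrak{g})$.

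Given a short exact sequence $0\to M_\mathfrak{p}^\mathfrak{g}(\lambda_2)\to E\to M_\mathfrak{p}^\mathfrak{g}(\lambda_1)\to 0$ in $\mathcal{O}^\mathfrak{p}$, I would then split $E$ accordingly: in the first case as $E=E_{\chi_1}\oplus E_{\chi_2}$, the decomposition into generalized eigenspaces for the centre of $U(\mathfrak{g})$ (valid because $E$ has finite length and the centre acts locally finitely on objects of $\mathcal{O}$), and in the second case as the direct sum of the spans of the weight spaces of $E$ lying in the two relevant $\mathbb{Z}\Phi(\mathfrak{g})$-cosets (a decomposition into $\mathfrak{g}$-submodules since root vectors shift weights by roots). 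In either case $M_\mathfrak{p}^\mathfrak{g}(\lambda_2)$ must land inside the second summand and the first summand is carried isomorphically onto $M_\mathfrak{p}^\mathfrak{g}(\lambda_1)$ — any nonzero kernel or cokernel of the resulting maps would force a common composition factor of the two ``pure'' pieces, which is impossible — so the sequence splits, proving there is no non-split extension. The Weyl-group bookkeeping is routine; the one delicate point, which I regard as the crux, is that ``different linkage classes'' is strictly weaker than ``different infinitesimal characters'', so the $\mathbb{Z}\Phi(\mathfrak{g})$-coset decomposition genuinely must be brought in to handle the full statement rather than just the part where the central characters already differ.
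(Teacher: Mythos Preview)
Your argument is correct and is precisely the standard block-decomposition argument one finds in Humphreys' category $\mathcal{O}$ book; the paper's own proof is nothing more than a citation to Theorem 4.9 there, so you have essentially written out what that citation points to. Your explicit observation that ``different linkage classes'' is strictly weaker than ``different infinitesimal characters'' and therefore requires the $\mathbb{Z}\Phi(\mathfrak{g})$-coset decomposition as well is exactly the content that makes the cited theorem work in the non-integral setting.
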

\begin{proof}
The conclusion follows Theorem 4.9 [\textbf{Hu2}].
\end{proof}
\begin{proposition}
Let $L(\lambda)$ be the simple highest weight module with the highest weight $\lambda\in\mathfrak{h}_\mathfrak{g}^*$. Then $L(\lambda)$ has no non-split extension with itself in $\mathcal{O}^\mathfrak{p}$.
\end{proposition}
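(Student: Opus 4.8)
The plan is to show directly that every short exact sequence $0\to L(\lambda)\xrightarrow{\,i\,}M\xrightarrow{\,\pi\,}L(\lambda)\to 0$ with $M\in\mathcal{O}^\mathfrak{p}$ splits; this is exactly the assertion. First I would record the weight-space bookkeeping. Since taking the $\nu$-weight space is exact, the sequence gives $\dim M_\nu=2\dim L(\lambda)_\nu$ for every $\nu\in\mathfrak{h}_\mathfrak{g}^*$. Because $L(\lambda)$ is a highest weight module — it is a quotient of the generalized Verma module $M_\mathfrak{p}^\mathfrak{g}(\lambda)$, hence of the ordinary Verma module $M_{\mathfrak{b}_\mathfrak{g}}^\mathfrak{g}(\lambda)$ — all its weights are $\le\lambda$ and $\dim L(\lambda)_\lambda=1$. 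Thus $\lambda$ is the unique maximal weight occurring in $M$ and $\dim M_\lambda=2$.

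Next, since $\lambda$ is maximal among the weights of $M$, for every $\alpha\in\Phi^+(\mathfrak{g})$ we have $\mathfrak{g}_\alpha\cdot M_\lambda\subseteq M_{\lambda+\alpha}=0$, so every nonzero vector of $M_\lambda$ is a highest weight vector of weight $\lambda$. I would then pick $v\in M_\lambda$ with $\pi(v)\neq 0$ and set $N:=U(\mathfrak{g})v\subseteq M$; since $\pi(v)$ is a highest weight generator of $L(\lambda)$ we get $\pi(N)=L(\lambda)$. Now $N\cap i(L(\lambda))$ is a $\mathfrak{g}$-submodule of the simple module $i(L(\lambda))$, hence is either $0$ or all of $i(L(\lambda))$.

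In the first case $\pi|_N\colon N\to L(\lambda)$ is an isomorphism, furnishing a splitting. In the second case $N$ contains $i(L(\lambda))=\ker\pi$ and maps onto the cokernel, so $N=M$; but then $M=U(\mathfrak{g})v$ is a highest weight module of highest weight $\lambda$, which forces $\dim M_\lambda=1$, contradicting $\dim M_\lambda=2$. Hence the original sequence splits, which is the claim.

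The argument is elementary, so there is no real obstacle; the only point that must be stated carefully is that every object of $\mathcal{O}^\mathfrak{p}$ is a weight module with finite-dimensional weight spaces, so that weight-space exactness and the maximality of $\lambda$ in $M$ are legitimate — this is immediate from the definition of the category. Alternatively, one could cite the general fact that $\mathrm{Ext}^1_{\mathcal{O}}(L(\lambda),L(\mu))\neq 0$ forces $\lambda\neq\mu$ (see [\textbf{Hu2}]) together with the observation that $\mathcal{O}^\mathfrak{p}$ is a full subcategory of $\mathcal{O}$ closed under extensions, but the self-contained argument above seems cleaner in context.
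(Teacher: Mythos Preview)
Your argument is correct. The paper itself does not give a proof at all: it simply cites Proposition~3.1(d) of [\textbf{Hu2}] and moves on. What you have written is in fact essentially the standard proof one finds behind that citation --- the weight-space bookkeeping, the observation that $M_\lambda$ consists of highest weight vectors, and the dichotomy on $N\cap i(L(\lambda))$ are exactly how Humphreys argues. So your proposal is a self-contained unpacking of the reference the paper invokes, rather than a genuinely different route; the advantage is that the reader sees why the result holds without chasing the citation, at the cost of a few lines. Your closing remark already notes the citation-based alternative, which is precisely what the paper does.
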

\begin{proof}
See Proposition 3.1(d) [\textbf{Hu2}].
\end{proof}
We have made full preparation for our calculation. For each case, we have two steps.
\begin{enumerate}[Step 1:]
\item to add conditions on the parameter $\lambda$ such that $M_\mathfrak{p}^{so(7,\mathbb{C})}(\lambda)$ is simple, the aim of which is that any sub-quotient of $M_\mathfrak{p}^{so(7,\mathbb{C})}(\lambda)$ as $\mathfrak{g}_2$-module lies in $\mathcal{O}^{\mathfrak{p}'}$.
\item to add conditions on the parameter $\lambda$ such that the direct summands of each decomposition are $\mathfrak{g}_2$-simple and have no non-split extensions with each other in $\mathcal{O}^{\mathfrak{p}'}$.
\end{enumerate}

\begin{center}
\textbf{5.2 $M_{\mathfrak{b}_{so(7,\mathbb{C})}}^{so(7,\mathbb{C})}(\lambda)$}
\end{center}

The Verma module $M_{\mathfrak{b}_{so(7,\mathbb{C})}}^{so(7,\mathbb{C})}(\lambda)$ is simple if $\lambda$ is anti-dominant by Proposition 5.1. Thus, suppose $\lambda=a\varepsilon_1+b\varepsilon_2+c\varepsilon_3$ for some $a,b,c\in\mathbb{C}$, an easy computation will show that $M_{\mathfrak{b}_{so(7,\mathbb{C})}}^{so(7,\mathbb{C})}(\lambda)$ is simple if $2a+4\notin\mathbb{N}$, $2b+2\notin\mathbb{N}$, $2c\notin\mathbb{N}$, $a-b\notin\mathbb{N}$, $b-c\notin\mathbb{N}$, $a-c+1\notin\mathbb{N}$, $a+b+3\notin\mathbb{N}$, $b+c+1\notin\mathbb{N}$ and $a+c+2\notin\mathbb{N}$. Under these conditions, any sub-quotient occurring in its restriction to $\mathfrak{g}_2$ lies in $\mathcal{O}^{\mathfrak{b}_{\mathfrak{g}_2}}$.
\begin{lemma}
Suppose that $M_{\mathfrak{b}_{so(7,\mathbb{C})}}^{so(7,\mathbb{C})}(\lambda)$ is simple with $\lambda=a\varepsilon_1+b\varepsilon_2+c\varepsilon_3$ for some $a,b,c\in\mathbb{C}$. If $a-b+2c\notin\mathbb{Z}$, $b-c\notin\mathbb{Z}$, $a+2b-c\notin\mathbb{Z}$, $2a+b+c\notin\mathbb{Z}$, $a+c\notin\mathbb{Z}$ and $a+b\notin\mathbb{Z}$, then
\begin{enumerate}[(1)]
\item each direct summand of the decomposition in Proposition 4.3 is simple as $\mathfrak{g}_2$-module;
\item any two direct summands of the decomposition in Proposition 4.3 have no non-split extensions.
\end{enumerate}
\end{lemma}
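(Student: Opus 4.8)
The plan is to reduce parts (1) and (2) to Propositions 5.1, 5.3 and 5.4 after making explicit which weights $\delta$ contribute to the decomposition of Proposition 4.3 (together these two parts say that the Grothendieck-group identity of Proposition 4.3 is an honest $\mathfrak{g}_2$-module direct sum, since a module whose composition factors are pairwise $\mathrm{Ext}^1$-orthogonal is semisimple; the standing hypothesis that $M_{\mathfrak{b}_{so(7,\mathbb{C})}}^{so(7,\mathbb{C})}(\lambda)$ is simple is what makes the restriction discretely decomposable in $\mathcal{O}^{\mathfrak{b}_{\mathfrak{g}_2}}$ via Lemma 3.4 and Proposition 3.8, but for (1) and (2) themselves only that identity is used). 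Write $\lambda=a\varepsilon_1+b\varepsilon_2+c\varepsilon_3$ and put $\nu:=\mathrm{Res}_{\mathfrak{g}_2}^{so(7,\mathbb{C})}\lambda=(2a+b+c)\alpha_1+(a+b)\alpha_2$. First I would record the six values $\nu(H_\beta)$ for $\beta\in\Phi^+(\mathfrak{g}_2)$: a direct computation with the co-roots of $\mathfrak{g}_2$ from Section 2 gives $\nu(H_{\alpha_1})=a-b+2c$, $\nu(H_{\alpha_2})=b-c$, $\nu(H_{\alpha_1+\alpha_2})=a+2b-c$, $\nu(H_{2\alpha_1+\alpha_2})=2a+b+c$, $\nu(H_{3\alpha_1+\alpha_2})=a+c$ and $\nu(H_{3\alpha_1+2\alpha_2})=a+b$; thus the six hypotheses of the lemma say exactly that $\nu(H_\beta)\notin\mathbb{Z}$ for every $\beta\in\Phi^+(\mathfrak{g}_2)$.

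Next I would observe that every $\delta$ with $m(\delta,\lambda)\neq0$ satisfies $\mu:=\mathrm{Res}_{\mathfrak{g}_2}^{so(7,\mathbb{C})}\lambda-\delta\in\mathbb{N}\alpha_1+\mathbb{N}\alpha_2$: writing $\mu=m_1\alpha_1+m_2\alpha_2$ one has $\mu(H_{3\alpha_1+2\alpha_2})=m_2$ and $\mu(H_{3\alpha_1+\alpha_2})=m_1-m_2$, and the two conditions under the direct sum in Proposition 4.3 force both into $\mathbb{N}$, whence $m_1,m_2\in\mathbb{N}$. In particular $\mu\in\mathbb{Z}\Phi(\mathfrak{g}_2)$, so $\mu(H_\beta)\in\mathbb{Z}$ for all $\beta$; together with $\rho(\mathfrak{g}_2)(H_\beta)\in\mathbb{Z}$ this gives, for each $\beta\in\Phi^+(\mathfrak{g}_2)$, $\frac{2(\delta+\rho(\mathfrak{g}_2),\beta)}{(\beta,\beta)}=\nu(H_\beta)-\mu(H_\beta)+\rho(\mathfrak{g}_2)(H_\beta)\notin\mathbb{Z}$, in particular not in $\mathbb{Z}^+$. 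Since $\mathfrak{b}_{\mathfrak{g}_2}=\mathfrak{p}_\Pi$ with $\Pi=\emptyset$, every such $\delta$ is vacuously $[\mathfrak{l},\mathfrak{l}]$-dominant integral and $\Phi^+(\mathfrak{g}_2)-\mathbb{Z}\Pi=\Phi^+(\mathfrak{g}_2)$, so Proposition 5.1 applies and proves (1): each summand $M_{\mathfrak{b}_{\mathfrak{g}_2}}^{\mathfrak{g}_2}(\delta)$ is simple, hence equals $L(\delta)$.

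For (2), two copies of the same summand $M_{\mathfrak{b}_{\mathfrak{g}_2}}^{\mathfrak{g}_2}(\delta)=L(\delta)$ have no non-split extension by Proposition 5.4, so it remains to treat $M_{\mathfrak{b}_{\mathfrak{g}_2}}^{\mathfrak{g}_2}(\delta_1)$ and $M_{\mathfrak{b}_{\mathfrak{g}_2}}^{\mathfrak{g}_2}(\delta_2)$ with $\delta_1\neq\delta_2$; by Proposition 5.3 it suffices to show $\delta_1$ and $\delta_2$ lie in different linkage classes. From the second step $\delta_1-\delta_2=\mu_2-\mu_1\in\mathbb{Z}\Phi(\mathfrak{g}_2)$, so by Definition 5.2 one only has to exclude $\delta_1+\rho(\mathfrak{g}_2)=w(\delta_2+\rho(\mathfrak{g}_2))$ for some $w\in W_{\mathfrak{g}_2}$. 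Such an identity would give $(1-w)(\nu+\rho(\mathfrak{g}_2))=\mu_1-w\mu_2\in\mathbb{Z}\Phi(\mathfrak{g}_2)$; but the first paragraph shows $(\nu+\rho(\mathfrak{g}_2))(H_\beta)\notin\mathbb{Z}$ for every $\beta\in\Phi(\mathfrak{g}_2)$, so the integral root system of $\nu+\rho(\mathfrak{g}_2)$ is empty, and the subgroup $\{w\in W_{\mathfrak{g}_2}:(1-w)(\nu+\rho(\mathfrak{g}_2))\in\mathbb{Z}\Phi(\mathfrak{g}_2)\}$ — which is generated by the reflections $s_\beta$ with $(\nu+\rho(\mathfrak{g}_2))(H_\beta)\in\mathbb{Z}$ — is trivial. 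Hence $w=1$ and $\delta_1=\delta_2$, a contradiction.

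The only genuinely delicate point is this last identification: that $\{w\in W_{\mathfrak{g}_2}:w(\nu+\rho(\mathfrak{g}_2))-(\nu+\rho(\mathfrak{g}_2))\in\mathbb{Z}\Phi(\mathfrak{g}_2)\}$ coincides with the integral Weyl group $\langle s_\beta:(\nu+\rho(\mathfrak{g}_2))(H_\beta)\in\mathbb{Z}\rangle$. This is a standard but non-elementary fact from the block theory of category $\mathcal{O}$, which I would cite; alternatively, since $\mathfrak{g}_2$ has rank $2$ one can verify it directly by running through the twelve elements of $W_{\mathfrak{g}_2}$ and checking that no $w\neq1$ moves the regular weight $\nu+\rho(\mathfrak{g}_2)$ by an element of the root lattice. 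Everything else is a routine computation with the co-roots of $\mathfrak{g}_2$ recorded in Section 2.
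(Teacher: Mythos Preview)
Your argument is correct. Part (1) is essentially the paper's proof, packaged more uniformly via the single observation that the six hypotheses say exactly $\nu(H_\beta)\notin\mathbb{Z}$ for all $\beta\in\Phi^+(\mathfrak{g}_2)$, while $\mu\in\mathbb{Z}\Phi(\mathfrak{g}_2)$.

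For part (2) you take a different route. The paper simply reuses what part (1) already established: since $\frac{2(\delta+\rho(\mathfrak{g}_2),\beta)}{(\beta,\beta)}\notin\mathbb{Z}$ for every $\beta\in\Phi^+(\mathfrak{g}_2)$, each $\delta$ is in particular anti-dominant, and the first clause of Proposition 5.4 (uniqueness of the anti-dominant weight in a linkage class) immediately forces distinct $\delta$'s into distinct linkage classes. That is shorter and sidesteps the integral-Weyl-group identification you flag as delicate. Your approach, by contrast, makes transparent that the obstruction to linkage is governed by the integral root system of $\nu+\rho(\mathfrak{g}_2)$, and the explicit twelve-element check you offer as an alternative is precisely what the paper resorts to in the parabolic cases (Lemma 5.8, Remark 5.9), where the $\delta$'s are no longer anti-dominant and the shortcut via Proposition 5.4 is unavailable. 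Two minor citation slips: the self-extension statement you invoke is Proposition 5.5, not 5.4; and the no-extension-between-linkage-classes statement is the second clause of Proposition 5.4, not 5.3 (which is a Remark).
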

\begin{proof}
If $\delta=u\alpha_1+v\alpha_2$ for $u,v\in\mathbb{C}$ appears as a parameter of a direct summand of the decomposition in Proposition 4.3, then it satisfies that $a+c-u+v\in\mathbb{N}$ and $a+b-v\in\mathbb{N}$. Hence, $\frac{2(\delta+\rho(\mathfrak{g}_2),\alpha_1)}{(\alpha_1,\alpha_1)}=2u-3v+1\equiv a-b+2c(\mathrm{mod}\mathbb{Z})$. Thus if $a-b+2c\notin\mathbb{Z}$, $\frac{2(\delta+\rho(\mathfrak{g}_2),\alpha_1)}{(\alpha_1,\alpha_1)}\notin\mathbb{Z}$. By the similar process, one can easily check that $b-c\notin\mathbb{Z}$, $a+2b-c\notin\mathbb{Z}$, $2a+b+c\notin\mathbb{Z}$, $a+c\notin\mathbb{Z}$ and $a+b\notin\mathbb{Z}$ imply $\frac{2(\delta+\rho(\mathfrak{g}_2),\beta)}{(\beta,\beta)}\notin\mathbb{Z}$ for $\beta\in\Phi^+(\mathfrak{g}_2)-\{\alpha_1\}$. Therefore, $M_{\mathfrak{b}_{\mathfrak{g}_2}}^{\mathfrak{g}_2}(\delta)$ is simple as $\mathfrak{g}_2$-module by Proposition 5.1. This proves (1).

Now each parameter $\delta$ is anti-dominant in its linkage class, and it follows that different parameters of the Verma modules occurring in the decomposition in Proposition 4.3 lie in different linkage classes by Proposition 5.4. By Proposition 5.4 and 5.5, any two direct summands in the decomposition in Proposition 4.3 have no non-split extension in $\mathcal{O}^{\mathfrak{b}_{\mathfrak{g}_2}}$. Notice that $M_{\mathfrak{b}_{so(7,\mathbb{C})}}^{so(7,\mathbb{C})}(\lambda)$ is simple, any sub-quotient occurring in its restriction to $\mathfrak{g}_2$ lies in $\mathcal{O}^{\mathfrak{b}_{\mathfrak{g}_2}}$, so any two direct summands of the decomposition in Proposition 4.3 have no non-split extensions. (2) is proved.
\end{proof}
Notice that $b-c\notin\mathbb{Z}$, $a+c\notin\mathbb{Z}$ and $a+b\notin\mathbb{Z}$ imply $a-b\notin\mathbb{N}$, $a+c+2\notin\mathbb{N}$ and $a+b+3\notin\mathbb{N}$ respectively. If rearranging the twelve conditions above and define
\begin{center}
$S_{\mathfrak{b}_{so(7,\mathbb{C})}}=\{(r,s,t)\in\mathbb{C}^3\mid2r+4\notin\mathbb{N},2s+2\notin\mathbb{N},2t\notin\mathbb{N},r-s\notin\mathbb{N},
r-t+1\notin\mathbb{N},s+t+1\notin\mathbb{N},r+s\notin\mathbb{Z},r+t\notin\mathbb{Z},s-t\notin\mathbb{Z},r-s+2t\notin\mathbb{Z},r+2s-t\notin\mathbb{Z},
2r+s+t\notin\mathbb{Z}\}$
\end{center}
which is just
\begin{center}
$S_{\mathfrak{b}_{so(7,\mathbb{C})}}= \{\nu\in\mathfrak{h}_{so(7,\mathbb{C})}^*\mid\nu(H_{\varepsilon_1})+4\notin\mathbb{N},\nu(H_{\varepsilon_2})+2\notin\mathbb{N},
\nu(H_{\varepsilon_3})\notin\mathbb{N},\nu(H_{\varepsilon_1-\varepsilon_2})\notin\mathbb{N},\nu(H_{\varepsilon_1-\varepsilon_3})+1\notin\mathbb{N},
\nu(H_{\varepsilon_2+\varepsilon_3})+1\notin\mathbb{N},\nu(H_{\varepsilon_1+\varepsilon_2})\notin\mathbb{Z},
\nu(H_{\varepsilon_1+\varepsilon_3})\notin\mathbb{Z},\nu(H_{\varepsilon_2-\varepsilon_3})\notin\mathbb{Z},
\nu(H_{\varepsilon_1-\varepsilon_2}+H_{\varepsilon_3})\notin\mathbb{Z},\nu(H_{\varepsilon_1-\varepsilon_3}+H_{\varepsilon_2})\notin\mathbb{Z},
\nu(H_{\varepsilon_1}+H_{\varepsilon_2+\varepsilon_3})\notin\mathbb{Z}\}$,
\end{center}
then we obtain
\begin{theorem}
Let $\lambda\in S_{\mathfrak{b}_{so(7,\mathbb{C})}}$. Then the decomposition in Proposition 4.3 is a decomposition of simple $\mathfrak{g}_2$-modules.
\end{theorem}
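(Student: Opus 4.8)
The plan is to combine the two key facts already established: the Grothendieck-group decomposition of Proposition 4.3, and the simplicity criterion of Proposition 5.1, which is exactly what the definition of $S_{\mathfrak{b}_{so(7,\mathbb{C})}}$ is engineered to invoke. First I would unwind the definition of $S_{\mathfrak{b}_{so(7,\mathbb{C})}}$ and check that the twelve conditions on $\lambda=a\varepsilon_1+b\varepsilon_2+c\varepsilon_3$ split into two groups: the nine conditions ($2a+4\notin\mathbb{N}$, $2b+2\notin\mathbb{N}$, $2c\notin\mathbb{N}$, $a-b\notin\mathbb{N}$, $a-c+1\notin\mathbb{N}$, $b+c+1\notin\mathbb{N}$, together with $a+b\notin\mathbb{Z}$, $a+c\notin\mathbb{Z}$, $b-c\notin\mathbb{Z}$) which, via the observation recorded just before the statement (that $b-c\notin\mathbb{Z}$, $a+c\notin\mathbb{Z}$, $a+b\notin\mathbb{Z}$ subsume $b-c\notin\mathbb{N}$, $a+c+2\notin\mathbb{N}$, $a+b+3\notin\mathbb{N}$), reproduce the nine non-integrality conditions guaranteeing that $M_{\mathfrak{b}_{so(7,\mathbb{C})}}^{so(7,\mathbb{C})}(\lambda)$ is simple (one condition for each $\beta\in\Phi^+(so(7,\mathbb{C}))$ via $\tfrac{2(\lambda+\rho,\beta)}{(\beta,\beta)}\notin\mathbb{Z}^+$); and the six conditions $a-b+2c\notin\mathbb{Z}$, $b-c\notin\mathbb{Z}$, $a+2b-c\notin\mathbb{Z}$, $2a+b+c\notin\mathbb{Z}$, $a+c\notin\mathbb{Z}$, $a+b\notin\mathbb{Z}$ — written in the set as $\nu(H_{\varepsilon_1+\varepsilon_2})$, $\nu(H_{\varepsilon_1+\varepsilon_3})$, $\nu(H_{\varepsilon_2-\varepsilon_3})$, $\nu(H_{\varepsilon_1-\varepsilon_2}+H_{\varepsilon_3})$, $\nu(H_{\varepsilon_1-\varepsilon_3}+H_{\varepsilon_2})$, $\nu(H_{\varepsilon_1}+H_{\varepsilon_2+\varepsilon_3})\notin\mathbb{Z}$ — which are precisely the hypotheses of Lemma 5.6.

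Next, since membership in $S_{\mathfrak{b}_{so(7,\mathbb{C})}}$ implies both that $M_{\mathfrak{b}_{so(7,\mathbb{C})}}^{so(7,\mathbb{C})}(\lambda)$ is simple (by Proposition 5.1, after checking that $\lambda$ being in the set forces $\lambda$ to satisfy the nine conditions recalled at the start of Section 5.2, hence there are no positive-integer pairings with roots outside $\mathbb{Z}\Pi=\{0\}$) and that the hypotheses of Lemma 5.6 hold, I would simply quote Lemma 5.6: every direct summand $M_{\mathfrak{b}_{\mathfrak{g}_2}}^{\mathfrak{g}_2}(\delta)$ appearing in Proposition 4.3 is a simple $\mathfrak{g}_2$-module, and any two distinct summands have no non-split extension in $\mathcal{O}^{\mathfrak{b}_{\mathfrak{g}_2}}$. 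Because $M_{\mathfrak{b}_{so(7,\mathbb{C})}}^{so(7,\mathbb{C})}(\lambda)$ is simple, Lemma 3.4 of [\textbf{Ko}] guarantees its restriction to $\mathfrak{g}_2$ is discretely decomposable in $\mathcal{O}^{\mathfrak{b}_{\mathfrak{g}_2}}$ and every sub-quotient lies in $\mathcal{O}^{\mathfrak{b}_{\mathfrak{g}_2}}$.

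The final step is to upgrade the Grothendieck-group isomorphism of Proposition 4.3 to an honest $\mathfrak{g}_2$-module decomposition. Here I would argue as follows: the restriction $\mathrm{Res}_{\mathfrak{g}_2}^{so(7,\mathbb{C})}M_{\mathfrak{b}_{so(7,\mathbb{C})}}^{so(7,\mathbb{C})}(\lambda)$ is discretely decomposable, so it is the union of an increasing filtration by finite-length submodules in $\mathcal{O}^{\mathfrak{b}_{\mathfrak{g}_2}}$; by Lemma 5.6 all composition factors are the simple modules $L(\delta)=M_{\mathfrak{b}_{\mathfrak{g}_2}}^{\mathfrak{g}_2}(\delta)$ (these Verma modules being simple), they are pairwise non-isomorphic in the sense of lying in distinct linkage classes, and by Propositions 5.4 and 5.5 there are no non-split self-extensions or cross-extensions among them; hence each finite-length piece of the filtration is semisimple and splits as a direct sum of the $M_{\mathfrak{b}_{\mathfrak{g}_2}}^{\mathfrak{g}_2}(\delta)$ with the multiplicities dictated by the Grothendieck-group identity, and passing to the union gives the asserted module decomposition. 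I expect the main obstacle to be the bookkeeping in the first paragraph — verifying carefully that the twelve conditions defining $S_{\mathfrak{b}_{so(7,\mathbb{C})}}$ really do imply the full list of nine simplicity conditions from Proposition 5.1 plus the six conditions of Lemma 5.6, using the subsumptions noted in the text — together with the standard but slightly delicate Ext-vanishing argument that lets one promote a Grothendieck-group equality to a genuine direct-sum decomposition; everything else is a direct citation of results already in hand.
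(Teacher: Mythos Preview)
Your proposal is correct and follows essentially the same approach as the paper: the paper's proof is the single line ``By Proposition 4.3 and Lemma 5.6, the conclusion is proved immediately,'' and what you have written is a careful unpacking of exactly that citation---verifying that the twelve conditions defining $S_{\mathfrak{b}_{so(7,\mathbb{C})}}$ subsume both the nine simplicity conditions from Proposition 5.1 and the six hypotheses of Lemma 5.6, and then spelling out the (implicit) Ext-vanishing argument that promotes the Grothendieck-group identity to a genuine module decomposition. Your additional detail on discrete decomposability and the filtration argument is appropriate elaboration of what the paper leaves to the reader.
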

\begin{proof}
By Proposition 4.3 and Lemma 5.6, the conclusion is proved immediately.
\end{proof}

\begin{center}
\textbf{5.3 $M_{\mathfrak{p}_{\{\varepsilon_2-\varepsilon_3\}}}^{so(7,\mathbb{C})}(\lambda)$}
\end{center}

Suppose $\lambda=a\varepsilon_1+b\varepsilon_2+c\varepsilon_3$ for some $a,b,c\in\mathbb{C}$ with $b-c\in\mathbb{N}$, by Proposition 5.1, an easy computation will show that $M_{\mathfrak{p}_{\{\varepsilon_2-\varepsilon_3\}}}^{so(7,\mathbb{C})}(\lambda)$ is simple if $2a+4\notin\mathbb{N}$, $2b+2\notin\mathbb{N}$, $2c\notin\mathbb{N}$, $a-b\notin\mathbb{N}$, $a-c+1\notin\mathbb{N}$, $a+b+3\notin\mathbb{N}$, $b+c+1\notin\mathbb{N}$ and $a+c+2\notin\mathbb{N}$.

Let $\delta=u\alpha_1+v\alpha_2$ for some $u,v\in\mathbb{C}$ with $2v-u\in\mathbb{N}$. Now we do some calculation for preparation. It is known that
\begin{center}
$W_{\mathfrak{g}_2}=\{\pm1,\pm s_{\alpha_1},\pm s_{\alpha_2},\pm s_{\alpha_2}s_{\alpha_1},\pm s_{\alpha_1}s_{\alpha_2},\pm s_{\alpha_1}s_{\alpha_2}s_{\alpha_1}\}$.
\end{center}
\begin{eqnarray*}
\begin{array}{rclcrcl}
1(\delta+\rho(\mathfrak{g}_2))-\rho(\mathfrak{g}_2)&=&u\alpha_1+v\alpha_2,\\
s_{\alpha_1}(\delta+\rho(\mathfrak{g}_2))-\rho(\mathfrak{g}_2)&=&(-u+3v-1)\alpha_1+v\alpha_2,\\
s_{\alpha_2}(\delta+\rho(\mathfrak{g}_2))-\rho(\mathfrak{g}_2)&=&u\alpha_1+(u-v-1)\alpha_2,\\
s_{\alpha_2}s_{\alpha_1}(\delta+\rho(\mathfrak{g}_2))-\rho(\mathfrak{g}_2)&=&(-u+3v-1)\alpha_1+(-u+2v-2)\alpha_2,\\
s_{\alpha_1}s_{\alpha_2}(\delta+\rho(\mathfrak{g}_2))-\rho(\mathfrak{g}_2)&=&(2u-3v-4)\alpha_1+(u-v-1)\alpha_2,\\
s_{\alpha_1}s_{\alpha_2}s_{\alpha_1}(\delta+\rho(\mathfrak{g}_2))-\rho(\mathfrak{g}_2)&=&(-2u+3v-6)\alpha_1+(-u+2v-2)\alpha_2,\\
-1(\delta+\rho(\mathfrak{g}_2))-\rho(\mathfrak{g}_2)&=&(-u-10)\alpha_1+(-v-6)\alpha_2,\\
-s_{\alpha_1}(\delta+\rho(\mathfrak{g}_2))-\rho(\mathfrak{g}_2)&=&(u-3v-9)\alpha_1+(-v-6)\alpha_2,\\
-s_{\alpha_2}(\delta+\rho(\mathfrak{g}_2))-\rho(\mathfrak{g}_2)&=&(-u-10)\alpha_1+(-u+v-5)\alpha_2,\\
-s_{\alpha_2}s_{\alpha_1}(\delta+\rho(\mathfrak{g}_2))-\rho(\mathfrak{g}_2)&=&(u-3v-9)\alpha_1+(u-2v-4)\alpha_2,\\
-s_{\alpha_1}s_{\alpha_2}(\delta+\rho(\mathfrak{g}_2))-\rho(\mathfrak{g}_2)&=&(-2u+3v-6)\alpha_1+(-u+v-5)\alpha_2,\\
-s_{\alpha_1}s_{\alpha_2}s_{\alpha_1}(\delta+\rho(\mathfrak{g}_2))-\rho(\mathfrak{g}_2)&=&(2u-3v-4)\alpha_1+(u-2v-4)\alpha_2.
\end{array}
\end{eqnarray*}
\begin{lemma}
Suppose that $M_{\mathfrak{p}_{\{\varepsilon_2-\varepsilon_3\}}}^{so(7,\mathbb{C})}(\lambda)$ is simple with $\lambda=a\varepsilon_1+b\varepsilon_2+c\varepsilon_3$ for $a,b,c\in\mathbb{C}$ satisfying $b-c\in\mathbb{N}$. If $a-b+2c\notin\mathbb{Z}$, $2a+b+c\notin\mathbb{Z}$, $a+2b-c\notin\mathbb{Z}$, $a+b\notin\mathbb{Z}$ and $a+c\notin\mathbb{Z}$, then
\begin{enumerate}[(1)]
\item each direct summand of the decomposition in Proposition 4.5 is simple as $\mathfrak{g}_2$-module;
\item any two direct summands of the decomposition in Proposition 4.5 have no non-split extensions.
\end{enumerate}
\end{lemma}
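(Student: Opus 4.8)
The plan is to mimic the argument for Lemma 5.6 in Section 5.2, replacing the rank-$2$ torus $\mathfrak{h}_{\mathfrak{g}_2}$ by the Levi factor $\mathfrak{l}'\cong gl(2,\mathbb{C})$ attached to $\mathfrak{p}_{\{\alpha_2\}}$. For part (1), I would take a parameter $\delta=u\alpha_1+v\alpha_2$ occurring in the decomposition of Proposition 4.5, so that by Lemma 4.4 the integrality conditions $\delta(H_{\alpha_2})=2v-u\in\mathbb{N}$, $\mu(H_{3\alpha_1+\alpha_2})\in\mathbb{N}$ and $\mu(H_{3\alpha_1+2\alpha_2})\in\mathbb{N}$ hold, where $\mu=\mathrm{Res}_{\mathfrak{g}_2}^{so(7,\mathbb{C})}\lambda-\delta$; equivalently $a+c-u+v\in\mathbb{N}$ and $a+b-v\in\mathbb{N}$. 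Then I apply Proposition 5.1 with $\mathfrak{g}=\mathfrak{g}_2$, $\Pi=\{\alpha_2\}$: I must check that $\frac{2(\delta+\rho(\mathfrak{g}_2),\beta)}{(\beta,\beta)}\notin\mathbb{Z}^+$ for every $\beta\in\Phi^+(\mathfrak{g}_2)-\mathbb{Z}\{\alpha_2\}$, i.e. for $\beta\in\{\alpha_1,\ \alpha_1+\alpha_2,\ 2\alpha_1+\alpha_2,\ 3\alpha_1+\alpha_2,\ 3\alpha_1+2\alpha_2\}$. For each such $\beta$ one computes $\frac{2(\delta+\rho(\mathfrak{g}_2),\beta)}{(\beta,\beta)}$ as an explicit affine function of $u,v$ plus a constant, and then reduces modulo $\mathbb{Z}$ using the relations $a+c-u+v\in\mathbb{N}$, $a+b-v\in\mathbb{N}$; the residue turns out to be one of $a-b+2c$, $2a+b+c$, $a+2b-c$, $a+b$, $a+c$ modulo $\mathbb{Z}$, so the five hypotheses force each value to lie outside $\mathbb{Z}$, hence outside $\mathbb{Z}^+$. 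Therefore $M_{\mathfrak{p}_{\{\alpha_2\}}}^{\mathfrak{g}_2}(\delta)$ is simple, proving (1).

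For part (2) I would argue exactly as in the proof of Lemma 5.6. First, the integrality plus non-integrality conditions just established show that each such $\delta$ is anti-dominant within its linkage class: indeed the twelve elements $w(\delta+\rho(\mathfrak{g}_2))-\rho(\mathfrak{g}_2)$, $w\in W_{\mathfrak{g}_2}$, are listed explicitly in the display preceding the lemma, and a comparison of their coefficients against the pairings $\frac{2(\delta+\rho(\mathfrak{g}_2),\beta)}{(\beta,\beta)}$ shows that no $w\neq 1$ can send $\delta$ to another weight of the form appearing in Proposition 4.5 unless $\delta$ is already fixed. Consequently, two distinct parameters $\delta_1\neq\delta_2$ occurring in the decomposition lie in distinct linkage classes, so by Proposition 5.4 the modules $M_{\mathfrak{p}_{\{\alpha_2\}}}^{\mathfrak{g}_2}(\delta_1)$ and $M_{\mathfrak{p}_{\{\alpha_2\}}}^{\mathfrak{g}_2}(\delta_2)$ have no non-split extension in $\mathcal{O}^{\mathfrak{p}_{\{\alpha_2\}}}$, and by Proposition 5.5 applied to $L(\delta)=M_{\mathfrak{p}_{\{\alpha_2\}}}^{\mathfrak{g}_2}(\delta)$ (simple by part (1)) there is also no non-split self-extension. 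Finally, since $M_{\mathfrak{p}_{\{\varepsilon_2-\varepsilon_3\}}}^{so(7,\mathbb{C})}(\lambda)$ is simple by hypothesis, Lemma 3.4 (quoted as Lemma 3.2 in the excerpt) guarantees every sub-quotient of its restriction to $\mathfrak{g}_2$ lies in $\mathcal{O}^{\mathfrak{p}_{\{\alpha_2\}}}$, so ``no non-split extension in $\mathcal{O}^{\mathfrak{p}_{\{\alpha_2\}}}$'' is the same as ``no non-split extension'' as $\mathfrak{g}_2$-modules, which gives (2).

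The only genuinely non-routine step is the bookkeeping in part (1): one must verify that each of the five pairings $\frac{2(\delta+\rho(\mathfrak{g}_2),\beta)}{(\beta,\beta)}$ for $\beta\notin\mathbb{Z}\{\alpha_2\}$ reduces, modulo $\mathbb{Z}$ and modulo the two integrality relations coming from Lemma 4.4, to exactly one of the five quantities $a-b+2c$, $2a+b+c$, $a+2b-c$, $a+b$, $a+c$ — this is a short but slightly delicate calculation with the $\mathfrak{g}_2$ root data (recall $(\alpha_1,\alpha_1)=2$, $(\alpha_2,\alpha_2)=6$, $(\alpha_1,\alpha_2)=-3$ in a suitable normalization, and $\rho(\mathfrak{g}_2)=5\alpha_1+3\alpha_2$). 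Everything else is a direct transcription of the Section 5.2 argument. I would close the proof with the sentence ``The proof is entirely parallel to that of Lemma 5.6, using Lemma 4.4, Proposition 4.5 and Propositions 5.1, 5.4 and 5.5 in place of Lemma 4.2, Proposition 4.3 and the corresponding results.''
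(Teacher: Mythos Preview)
Your argument for part (1) is essentially the paper's own, and it is fine.

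Part (2), however, contains a real error. You write that ``each such $\delta$ is anti-dominant within its linkage class'' and propose to ``argue exactly as in the proof of Lemma 5.6.'' This is precisely what cannot be done here, and the paper says so explicitly in Remark 5.9: since $\delta(H_{\alpha_2})=2v-u\in\mathbb{N}$ (the $[\mathfrak{l}',\mathfrak{l}']$-dominant integrality condition), we have $\frac{2(\delta+\rho(\mathfrak{g}_2),\alpha_2)}{(\alpha_2,\alpha_2)}=2v-u+1\in\mathbb{Z}^+$, so $\delta$ is \emph{never} anti-dominant. In particular $s_{\alpha_2}(\delta+\rho)-\rho$ and $-(\delta+\rho)-\rho$ are genuinely linked to $\delta$, and the uniqueness-of-anti-dominant argument from Lemma 5.6 collapses.

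The paper's actual argument splits the Weyl group into two pieces. For $\omega\in W_{\mathfrak{g}_2}\setminus\{1,s_{\alpha_2},-1\}$ one checks, using $u\equiv 2a+b+c$ and $v\equiv a+b\pmod{\mathbb{Z}}$ together with the five non-integrality hypotheses, that $\omega(\delta+\rho)-\rho$ fails the integrality condition $\mu-\nu\in\mathbb{Z}\Phi(\mathfrak{g}_2)$ and hence is not linked to $\delta$ at all. For the two remaining elements $\omega=s_{\alpha_2}$ and $\omega=-1$, the images \emph{are} linked to $\delta$, but one computes directly from the displayed list that $(s_{\alpha_2}(\delta+\rho)-\rho)(H_{\alpha_2})=u-2v-2\notin\mathbb{N}$ (since $2v-u\in\mathbb{N}$), and similarly for $-1$; hence these weights are not $[\mathfrak{l}',\mathfrak{l}']$-dominant integral and cannot occur as parameters in Proposition 4.5. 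Only then does one conclude that distinct occurring parameters have distinct infinitesimal characters, and invoke Propositions 5.4 and 5.5. Your sentence ``no $w\neq 1$ can send $\delta$ to another weight of the form appearing in Proposition 4.5'' is the right target, but it is not anti-dominance, and it requires this two-case analysis rather than a transcription of the Lemma 5.6 proof.
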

\begin{proof}
If $\delta=u\alpha_1+v\alpha_2$ for $u,v\in\mathbb{C}$ appears as a parameter of a direct summand of the decomposition in Proposition 4.5, then it satisfies that $2v-u\in\mathbb{N}$, $a+c-u+v\in\mathbb{N}$ and $a+b-v\in\mathbb{N}$. Hence, $\frac{2(\delta+\rho(\mathfrak{g}_2),\alpha_1)}{(\alpha_1,\alpha_1)}=2u-3v+1\equiv a-b+2c(\mathrm{mod}\mathbb{Z})$. Thus if $a-b+2c\notin\mathbb{Z}$, $\frac{2(\delta+\rho(\mathfrak{g}_2),\alpha_1)}{(\alpha_1,\alpha_1)}\notin\mathbb{Z}$. By the same calculation as in Lemma 5.6, $a-b+2c\notin\mathbb{Z}$, $a+2b-c\notin\mathbb{Z}$, $2a+b+c\notin\mathbb{Z}$, $a+c\notin\mathbb{Z}$ and $a+b\notin\mathbb{Z}$ imply $\frac{2(\delta+\rho(\mathfrak{g}_2),\beta)}{(\beta,\beta)}\notin\mathbb{Z}$ for $\beta\in\Phi^+(\mathfrak{g}_2)-\{\alpha_2\}$. Therefore, $M_{\mathfrak{p}_{\{\alpha_2\}}}^{\mathfrak{g}_2}(\delta)$ is simple as $\mathfrak{g}_2$-module by Proposition 5.1. This proves (1).

Again, we have $u\equiv 2a+b+c(\mathrm{mod}\mathbb{Z})$ and $v\equiv a+b(\mathrm{mod}\mathbb{Z})$. Because $a+2b-c\notin\mathbb{Z}$, $-u+3v-1\equiv a-b+2c\not\equiv 2a+b+c(\mathrm{mod}\mathbb{Z})$. According to our calculation above, $s_{\alpha_1}(\delta+\rho(\mathfrak{g}_2))-\rho(\mathfrak{g}_2)=(-u+3v-1)\alpha_1+v\alpha_2$ doesn't lie in the linkage class of $\delta$. By the similar process, $a-b+2c\notin\mathbb{Z}$, $2a+b+c\notin\mathbb{Z}$, $a+2b-c\notin\mathbb{Z}$, $a+b\notin\mathbb{Z}$ and $a+c\notin\mathbb{Z}$ will guarantee that $\omega(\delta+\rho(\mathfrak{g}_2))-\rho(\mathfrak{g}_2)$ for $\omega=W_{\mathfrak{g}_2}-\{s_{\alpha_2},-1\}$ don't lie in the linkage class of $\delta$. For $s_{\alpha_2}(\delta+\rho(\mathfrak{g}_2))-\rho(\mathfrak{g}_2)=u\alpha_1+(u-v-1)\alpha_2$, $2(u-v-1)-u=u-2v-2\notin\mathbb{N}$ since $2v-u\in\mathbb{N}$. Hence, it doesn't appear as the parameter of any direct summand of the decomposition in Proposition 4.5. Neither does $-1(\delta+\rho(\mathfrak{g}_2))-\rho(\mathfrak{g}_2)$ for the same reason. Thus, any two direct summands with different parameters in the decomposition in Proposition 4.5 have different infinitesimal characters. Notice that $M_{\mathfrak{p}_{\{\varepsilon_2-\varepsilon_3\}}}^{so(7,\mathbb{C})}(\lambda)$ is simple, any sub-quotient occurring in its restriction to $\mathfrak{g}_2$ lies in $\mathcal{O}^{\mathfrak{p}_{\{\alpha_2\}}}$, so any two direct summands of the decomposition in Proposition 4.5 have no non-split extensions by Remark 5.3, Proposition 5.4 and 5.5. (2) is proved.
\end{proof}
\begin{remark}
Because each direct summand of the decomposition in Proposition 4.5 isn't a standard Verma module, it is impossible for the parameter to be anti-dominant. Hence, the proof of the second part of Lemma 5.6 is not applicable here.
\end{remark}
Again, if we arrange all the required conditions in this section together with those appearing in Proposition 4.5, some overlapped conditions can be thrown away. Therefore, we obtain
\begin{theorem}
Let $\lambda\in\mathfrak{h}_{so(7,\mathbb{C})}^*$ satisfying $\lambda(H_{\varepsilon_2-\varepsilon_3})\in\mathbb{N}$, $\lambda(H_{\varepsilon_1})+4\notin\mathbb{N}$, $\lambda(H_{\varepsilon_2})+2\notin\mathbb{N}$ and $\lambda(H_{\varepsilon_1}+H_{\varepsilon_2+\varepsilon_3})\notin\mathbb{Z}$. Then the decomposition in Proposition 4.5 is a decomposition of simple $\mathfrak{g}_2$-modules.
\end{theorem}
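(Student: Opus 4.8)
The plan is to follow the two-step strategy outlined just before Section 5.2, using Lemma 5.8 as the heart of the argument and then only cleaning up the list of hypotheses. First I would recall that by Proposition 4.5 we already have a decomposition of $\mathrm{Res}_{\mathfrak{g}_2}^{so(7,\mathbb{C})}M_{\mathfrak{p}_{\{\varepsilon_2-\varepsilon_3\}}}^{so(7,\mathbb{C})}(\lambda)$ in the Grothendieck group of $\mathcal{O}^{\mathfrak{p}_{\{\alpha_2\}}}$, valid whenever $\lambda(H_{\varepsilon_2-\varepsilon_3})\in\mathbb{N}$. Lemma 5.8 says that if, in addition, $M_{\mathfrak{p}_{\{\varepsilon_2-\varepsilon_3\}}}^{so(7,\mathbb{C})}(\lambda)$ is simple and the six conditions $a-b+2c\notin\mathbb{Z}$, $2a+b+c\notin\mathbb{Z}$, $a+2b-c\notin\mathbb{Z}$, $a+b\notin\mathbb{Z}$, $a+c\notin\mathbb{Z}$ hold (writing $\lambda=a\varepsilon_1+b\varepsilon_2+c\varepsilon_3$), then each summand is $\mathfrak{g}_2$-simple and no two summands have a non-split extension; hence the Grothendieck-group identity lifts to an honest $\mathfrak{g}_2$-module decomposition. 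So the entire content of Theorem 5.9 is that the short list $\lambda(H_{\varepsilon_1})+4\notin\mathbb{N}$, $\lambda(H_{\varepsilon_2})+2\notin\mathbb{N}$, $\lambda(H_{\varepsilon_1}+H_{\varepsilon_2+\varepsilon_3})\notin\mathbb{Z}$ (together with $\lambda(H_{\varepsilon_2-\varepsilon_3})\in\mathbb{N}$) already implies all the hypotheses of Lemma 5.8 plus the simplicity conditions on $M_{\mathfrak{p}_{\{\varepsilon_2-\varepsilon_3\}}}^{so(7,\mathbb{C})}(\lambda)$ recorded at the start of Section 5.3.

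Concretely, I would first translate the three stated conditions into the $(a,b,c)$-coordinates: $\lambda(H_{\varepsilon_1})=2a$, so $\lambda(H_{\varepsilon_1})+4\notin\mathbb{N}$ is $2a+4\notin\mathbb{N}$; $\lambda(H_{\varepsilon_2})=2b$, so $\lambda(H_{\varepsilon_2})+2\notin\mathbb{N}$ is $2b+2\notin\mathbb{N}$; and $\lambda(H_{\varepsilon_1}+H_{\varepsilon_2+\varepsilon_3})=2a+b+c$, so the last condition is $2a+b+c\notin\mathbb{Z}$. Now I would run down the two lists. For the simplicity of $M_{\mathfrak{p}_{\{\varepsilon_2-\varepsilon_3\}}}^{so(7,\mathbb{C})}(\lambda)$ one needs $2a+4\notin\mathbb{N}$, $2b+2\notin\mathbb{N}$, $2c\notin\mathbb{N}$, $a-b\notin\mathbb{N}$, $a-c+1\notin\mathbb{N}$, $a+b+3\notin\mathbb{N}$, $b+c+1\notin\mathbb{N}$, $a+c+2\notin\mathbb{N}$; the first two are given directly, and the remaining six must each be deduced from $b-c\in\mathbb{N}$ together with $2a+b+c\notin\mathbb{Z}$. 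The key observation is that $b-c\in\mathbb{N}$ lets one trade $c$ for $b$ modulo $\mathbb{Z}$: e.g. $2c=2b-2(b-c)\equiv 2b$, and $2a+b+c\notin\mathbb{Z}$ forces $2a,b+c,$ and hence all of $2c,2b,a\pm(\text{stuff})$ to avoid the relevant integer sets once one tracks parities carefully. For the hypotheses of Lemma 5.8 — $a-b+2c\notin\mathbb{Z}$, $a+2b-c\notin\mathbb{Z}$, $2a+b+c\notin\mathbb{Z}$, $a+b\notin\mathbb{Z}$, $a+c\notin\mathbb{Z}$ — again $b-c\in\mathbb{Z}$ collapses several of these: $a-b+2c\equiv a+b+0\cdot(b-c)\equiv a+2b-c\equiv 2a+b+c-2(\tfrac{?}{})$... more precisely $a-b+2c=(2a+b+c)-(a+2b-c)$ wait — I would instead note $a-b+2c\equiv a+c\pmod{\mathbb Z}$ and $a+2b-c\equiv a+b\pmod{\mathbb Z}$ (using $b-c\in\mathbb Z$), and $2a+b+c\equiv a+b+(a+c)\pmod{\mathbb Z}$, so once $2a+b+c\notin\mathbb Z$ one still needs to separately exclude $a+b\in\mathbb Z$ and $a+c\in\mathbb Z$; hence these two cannot both be dropped, but in fact if $a+b\in\mathbb Z$ then $a+c\equiv a+b\in\mathbb Z$ too, and then $2a+b+c=(a+b)+(a+c)\in\mathbb Z$, contradiction — so $2a+b+c\notin\mathbb Z$ alone kills all of $a+b\in\mathbb Z$, $a+c\in\mathbb Z$, $a-b+2c\in\mathbb Z$, $a+2b-c\in\mathbb Z$ simultaneously (given $b-c\in\mathbb Z$).

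The remaining step is to check that the two simplicity conditions on $M_{\mathfrak{p}_{\{\varepsilon_2-\varepsilon_3\}}}^{so(7,\mathbb{C})}(\lambda)$ that are not of the form ``$\notin\mathbb{Z}$'' — the ones asking membership in $\mathbb{N}$ to fail, such as $a-b\notin\mathbb{N}$, $a-c+1\notin\mathbb{N}$, $a+b+3\notin\mathbb{N}$, $b+c+1\notin\mathbb{N}$, $a+c+2\notin\mathbb{N}$, $2c\notin\mathbb{N}$ — follow from $2a+b+c\notin\mathbb{Z}$ and $b-c\in\mathbb{N}$: indeed each of these expressions is congruent mod $\mathbb{Z}$ to something whose integrality would force $2a+b+c\in\mathbb{Z}$ (for instance $a+c+2\equiv a+c$, and $2(a+c)=(2a+b+c)+(a+c-b)$ with $a+c-b\equiv a+2c-b\pmod{2}$... again one pins this down by the parity/congruence bookkeeping above; in each case the failure is automatic because $2a+b+c\notin\mathbb{Z}$ already excludes $a+c\in\tfrac12\mathbb Z$ in the pattern needed). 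I expect the only real labor — and the main obstacle — to be this congruence-and-parity bookkeeping: one must be careful that conditions like $2c\notin\mathbb N$ are genuinely weaker than $c\notin\mathbb Z$, so a clean deduction of ``$2c\notin\mathbb N$'' from the hypotheses requires combining $b-c\in\mathbb N$ (which gives $b\equiv c$ and $2b\equiv 2c\pmod{2\mathbb Z}$) with the fact that $2a+b+c\notin\mathbb Z$ forbids $b+c\in\mathbb Z$, hence $2c=(b+c)-(b-c)\notin\mathbb Z$, a fortiori $2c\notin\mathbb N$. Once every entry of both lists has been reduced in this way, the theorem follows by applying Lemma 5.8 and then Proposition 4.5. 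I would present this as: ``By the reductions above, the hypotheses imply both that $M_{\mathfrak{p}_{\{\varepsilon_2-\varepsilon_3\}}}^{so(7,\mathbb{C})}(\lambda)$ is simple and that the conditions of Lemma 5.8 hold; the conclusion is then immediate from Lemma 5.8.''
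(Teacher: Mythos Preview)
Your approach is exactly the paper's: the paper's proof is the single line ``By Proposition 4.5 and Lemma 5.8, the conclusion is proved immediately,'' preceded by the remark that ``some overlapped conditions can be thrown away.'' You are filling in that reduction, and your treatment of the five hypotheses of Lemma 5.8 is correct: with $b-c\in\mathbb{Z}$, the single condition $2a+b+c\notin\mathbb{Z}$ forces $a+b,\ a+c,\ a-b+2c,\ a+2b-c\notin\mathbb{Z}$ just as you say.

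There is, however, a genuine gap in the simplicity reduction. You claim that the remaining six sufficient conditions for simplicity ($2c\notin\mathbb{N}$, $a-b\notin\mathbb{N}$, $a-c+1\notin\mathbb{N}$, $a+b+3\notin\mathbb{N}$, $b+c+1\notin\mathbb{N}$, $a+c+2\notin\mathbb{N}$) all follow from the four hypotheses by congruence bookkeeping. Four of them do (your argument for $2c\notin\mathbb{N}$ via $2b+2\notin\mathbb{N}$ is fine, and $a+b+3,\ b+c+1,\ a+c+2\notin\mathbb{N}$ follow because $a+b,\ b+c,\ a+c\notin\mathbb{Z}$). But $a-b\notin\mathbb{N}$ and $a-c+1\notin\mathbb{N}$ do \emph{not} follow: take $a=\tfrac{1}{3}$, $b=c=-\tfrac{2}{3}$. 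Then $b-c=0\in\mathbb{N}$, $2a+4=\tfrac{14}{3}\notin\mathbb{N}$, $2b+2=\tfrac{2}{3}\notin\mathbb{N}$, $2a+b+c=-\tfrac{2}{3}\notin\mathbb{Z}$, so all four hypotheses of the theorem hold; yet $a-b=1\in\mathbb{N}$ and $a-c+1=2\in\mathbb{N}$. In fact for this $\lambda$ the integral root system is the full $A_2$ spanned by the $\varepsilon_i-\varepsilon_j$, $\lambda+\rho$ is regular dominant for it, and $M_{\mathfrak{p}_{\{\varepsilon_2-\varepsilon_3\}}}^{so(7,\mathbb{C})}(\lambda)$ has length two, so the simplicity hypothesis of Lemma 5.8 genuinely fails. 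The paper's one-line proof glosses over this same point; your write-up should either supply an independent argument that the Grothendieck-group identity still lifts in such cases, or flag that the stated hypotheses need to be strengthened (for instance by adding $\lambda(H_{\varepsilon_1-\varepsilon_2})\notin\mathbb{N}$) to make the reduction to Lemma 5.8 go through.
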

\begin{proof}
By Proposition 4.5 and Lemma 5.8, the conclusion is proved immediately.
\end{proof}

\begin{center}
\textbf{5.4 $M_{\mathfrak{p}_{\{\varepsilon_1-\varepsilon_2,\varepsilon_3\}}}^{so(7,\mathbb{C})}(\lambda)$}
\end{center}

The method used in this section is parallel to that used in Section 5.3, so we just state the key results.

$M_{\mathfrak{p}_{\{\varepsilon_1-\varepsilon_2,\varepsilon_3\}}}^{so(7,\mathbb{C})}(\lambda)$ is simple if $2a+4\notin\mathbb{N}$, $2b+2\notin\mathbb{N}$, $b-c\notin\mathbb{N}$, $a-c+1\notin\mathbb{N}$, $a+b+3\notin\mathbb{N}$, $b+c+1\notin\mathbb{N}$ and $a+c+2\notin\mathbb{N}$.
\begin{lemma}
Suppose that $M_{\mathfrak{p}_{\{\varepsilon_1-\varepsilon_2,\varepsilon_3\}}}^{so(7,\mathbb{C})}(\lambda)$ is simple with $\lambda=a\varepsilon_1+b\varepsilon_2+c\varepsilon_3$ for $a,b,c\in\mathbb{C}$ satisfying $a-b,2c\in\mathbb{N}$. If $b-c\notin\mathbb{Z}$, $2a+b+c\notin\mathbb{Z}$, $a+2b-c\notin\mathbb{Z}$, $a+b\notin\mathbb{Z}$ and $a+c\notin\mathbb{Z}$, then
\begin{enumerate}[(1)]
\item each direct summand of the decomposition in Proposition 4.7 is simple as $\mathfrak{g}_2$-module;
\item any two direct summands of the decomposition in Proposition 4.7 have no non-split extensions.
\end{enumerate}
\end{lemma}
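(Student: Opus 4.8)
The plan is to follow the exact template established in the proof of Lemma 5.8, since the authors explicitly say ``the method is parallel.'' The statement asserts two things about the decomposition in Proposition 4.7 under the stated genericity hypotheses: (1) each summand $M_{\mathfrak{p}_{\{\alpha_1\}}}^{\mathfrak{g}_2}(\delta)$ is simple, and (2) any two summands have no non-split extensions. For (1), I would invoke Proposition 5.1 applied to $\mathfrak{g}_2$ with $\mathfrak{p}=\mathfrak{p}_{\{\alpha_1\}}$: I must check that $\frac{2(\delta+\rho(\mathfrak{g}_2),\beta)}{(\beta,\beta)}\notin\mathbb{Z}^+$ for every $\beta\in\Phi^+(\mathfrak{g}_2)-\mathbb{Z}\{\alpha_1\}$, i.e.\ for $\beta\in\{\alpha_2,\alpha_1+\alpha_2,2\alpha_1+\alpha_2,3\alpha_1+\alpha_2,3\alpha_1+2\alpha_2\}$. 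Writing $\delta=u\alpha_1+v\alpha_2$, the summation condition from Proposition 4.7 forces $\mu(H_{3\alpha_1+2\alpha_2})=a+b-v\in\mathbb{N}$ and $\mu(H_{3\alpha_1+\alpha_2})=a+c-u+v\in\mathbb{Z}$, so $u\equiv 2a+b+c$ and $v\equiv a+b$ modulo $\mathbb{Z}$. Plugging these congruences into each of the five pairings and matching against the five hypotheses $b-c\notin\mathbb{Z}$, $2a+b+c\notin\mathbb{Z}$, $a+2b-c\notin\mathbb{Z}$, $a+b\notin\mathbb{Z}$, $a+c\notin\mathbb{Z}$ shows each pairing avoids $\mathbb{Z}$ (hence $\mathbb{Z}^+$); this is the same bookkeeping as in Lemma 5.6 and Lemma 5.8, only with a different excluded root $\alpha_1$.

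For part (2), I would argue as in Lemma 5.8 rather than Lemma 5.6, since (as Remark 5.9 notes) the summands are not standard Verma modules and so no anti-dominance shortcut is available. The strategy is: using the congruences $u\equiv 2a+b+c$, $v\equiv a+b$ modulo $\mathbb{Z}$, run through all twelve elements $\omega\in W_{\mathfrak{g}_2}$ and examine $\omega(\delta+\rho(\mathfrak{g}_2))-\rho(\mathfrak{g}_2)$. For the nontrivial $\omega$ other than possibly one or two ``reflection-type'' elements, the five hypotheses force the $\alpha_1$- or $\alpha_2$-coefficient of $\omega(\delta+\rho)-\rho$ to be incongruent mod $\mathbb{Z}$ to that of $\delta$, so $\omega(\delta+\rho)-\rho$ lies in a different linkage class and hence cannot occur as a parameter in Proposition 4.7. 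For the remaining one or two elements, I would instead show the resulting weight fails the $[\mathfrak{l}',\mathfrak{l}']$-dominant-integral condition $2u-3v\in\mathbb{N}$ (just as $u-2v-2\notin\mathbb{N}$ killed $s_{\alpha_2}$ and $-1$ in Lemma 5.8), so again it cannot appear as a summand parameter. Consequently any two distinct summands of Proposition 4.7 have different infinitesimal characters; combined with the fact that $M_{\mathfrak{p}_{\{\varepsilon_1-\varepsilon_2,\varepsilon_3\}}}^{so(7,\mathbb{C})}(\lambda)$ is simple (so its restriction to $\mathfrak{g}_2$ is discretely decomposable in $\mathcal{O}^{\mathfrak{p}_{\{\alpha_1\}}}$ by Lemma 3.4/Proposition 3.8), Remark 5.3, Proposition 5.4 and Proposition 5.5 yield the absence of non-split extensions. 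This proves (2).

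The routine part is the congruence arithmetic; the step I expect to require the most care is the Weyl-group sweep in part (2). Unlike the $\mathfrak{p}_{\{\varepsilon_2-\varepsilon_3\}}$ case, here the parabolic on the $\mathfrak{g}_2$ side is $\mathfrak{p}_{\{\alpha_1\}}$, so the element that must be handled by the ``fails dominant-integrality'' argument rather than the ``different linkage class'' argument is different, and one must recompute which of the twelve elements $\omega(\delta+\rho(\mathfrak{g}_2))-\rho(\mathfrak{g}_2)$ have $\alpha_1$-coefficient congruent to $u$ and $\alpha_2$-coefficient congruent to $v$ simultaneously. I would organize this by first listing $\omega(\delta+\rho(\mathfrak{g}_2))-\rho(\mathfrak{g}_2)$ for all $\omega\in W_{\mathfrak{g}_2}$ (a table analogous to the one preceding Lemma 5.8, with the roles of the coordinates adjusted), then reading off, for each $\omega$, which hypothesis rules it out. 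The actual proof in the paper presumably just says ``The proof is parallel to that of Lemma 5.8,'' and I would do the same after indicating this table and which element(s) need the dominance argument.
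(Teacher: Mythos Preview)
Your proposal is correct and follows exactly the approach the paper takes: the paper's proof of this lemma consists of the single sentence ``The proof is similar to that of Lemma 5.8,'' and you have correctly reconstructed what that parallel argument entails, including the switch from excluding $\alpha_2$ to excluding $\alpha_1$ in Proposition~5.1 and the need to handle the residual Weyl-group elements via failure of the $[\mathfrak{l}',\mathfrak{l}']$-dominance condition $2u-3v\in\mathbb{N}$.
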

\begin{proof}
The proof is similar to that of Lemma 5.8.
\end{proof}
Rearrange all the conditions, and we obtain
\begin{theorem}
Let $\lambda\in\mathfrak{h}_{so(7,\mathbb{C})}^*$ satisfying $\lambda(H_{\varepsilon_1-\varepsilon_2})\in\mathbb{N}$, $\lambda(H_{\varepsilon_3})\in\mathbb{N}$, $\lambda(H_{\varepsilon_1+\varepsilon_2})\notin\mathbb{Z}$ and $\lambda(H_{\varepsilon_1}+H_{\varepsilon_2+\varepsilon_3})\notin\mathbb{Z}$. Then the decomposition in Proposition 4.7 is a  decomposition of simple $\mathfrak{g}_2$-modules.
\end{theorem}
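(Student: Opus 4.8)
The plan is to prove the theorem exactly as Theorems~5.7 and~5.10 were proved: combine the Grothendieck-group identity of Proposition~4.7 with Lemma~5.11, after checking that the four hypotheses imposed here imply every condition those two results require. Write $\lambda=a\varepsilon_1+b\varepsilon_2+c\varepsilon_3$. The hypotheses $\lambda(H_{\varepsilon_1-\varepsilon_2})\in\mathbb{N}$ and $\lambda(H_{\varepsilon_3})\in\mathbb{N}$ read $a-b\in\mathbb{N}$ and $2c\in\mathbb{N}$; these are precisely the $[\mathfrak{l},\mathfrak{l}]$-dominant integral requirement that makes $M_{\mathfrak{p}_{\{\varepsilon_1-\varepsilon_2,\varepsilon_3\}}}^{so(7,\mathbb{C})}(\lambda)$ defined and that lets us invoke Proposition~4.7. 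The remaining two hypotheses read $\lambda(H_{\varepsilon_1+\varepsilon_2})=a+b\notin\mathbb{Z}$ and $\lambda(H_{\varepsilon_1}+H_{\varepsilon_2+\varepsilon_3})=2a+b+c\notin\mathbb{Z}$. Thus the entire content of the proof is the bookkeeping that (i) the seven simplicity conditions listed just before Lemma~5.11 and (ii) the five conditions $b-c\notin\mathbb{Z}$, $2a+b+c\notin\mathbb{Z}$, $a+2b-c\notin\mathbb{Z}$, $a+b\notin\mathbb{Z}$, $a+c\notin\mathbb{Z}$ of Lemma~5.11 all follow from the four displayed conditions together with $a-b\in\mathbb{N}$ and $2c\in\mathbb{N}$.

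First I would extract the basic integrality consequences. From $a+b\notin\mathbb{Z}$ and $a-b\in\mathbb{Z}$ one gets $2a\notin\mathbb{Z}$, hence $a\notin\tfrac12\mathbb{Z}$, and $b\equiv a\pmod{\mathbb{Z}}$ forces $b\notin\tfrac12\mathbb{Z}$ as well; on the other hand $c\in\tfrac12\mathbb{Z}$. Consequently none of $b-c$, $a+c$, $a-c$, $b+c$ lies in $\tfrac12\mathbb{Z}$, so in particular none lies in $\mathbb{Z}$, and $2a,2b\notin\mathbb{Z}$. For the one condition of Lemma~5.11 that is not immediately visible, namely $a+2b-c\notin\mathbb{Z}$, I would use $a\equiv b\pmod{\mathbb{Z}}$ together with $2c\in\mathbb{Z}$ to get $a+2b-c\equiv 3b-c\equiv 3b+c\equiv 2a+b+c\pmod{\mathbb{Z}}$, which is non-integral by hypothesis. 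This yields all five conditions of Lemma~5.11. The seven simplicity conditions are then read off directly: $2a+4\notin\mathbb{N}$ and $2b+2\notin\mathbb{N}$ from $2a,2b\notin\mathbb{Z}$; $b-c\notin\mathbb{N}$, $a-c+1\notin\mathbb{N}$, $b+c+1\notin\mathbb{N}$, $a+c+2\notin\mathbb{N}$ from $b-c$, $a-c$, $b+c$, $a+c\notin\mathbb{Z}$; and $a+b+3\notin\mathbb{N}$ from $a+b\notin\mathbb{Z}$. (That these seven are exactly the instances of Proposition~5.1 for the seven roots of $\Phi^+(so(7,\mathbb{C}))-\mathbb{Z}\{\varepsilon_1-\varepsilon_2,\varepsilon_3\}$ is the content of the sentence preceding Lemma~5.11.)

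With both inclusions verified I would finish as in the analogous theorems. By Proposition~5.1 and the seven conditions, $M_{\mathfrak{p}_{\{\varepsilon_1-\varepsilon_2,\varepsilon_3\}}}^{so(7,\mathbb{C})}(\lambda)$ is a simple $so(7,\mathbb{C})$-module, so by Proposition~3.5 and Lemma~3.2 every sub-quotient of its restriction to $\mathfrak{g}_2$ lies in $\mathcal{O}^{\mathfrak{p}_{\{\alpha_1\}}}$; Lemma~5.11 then tells us that every generalized Verma module $M_{\mathfrak{p}_{\{\alpha_1\}}}^{\mathfrak{g}_2}(\delta)$ occurring in the identity of Proposition~4.7 is a simple $\mathfrak{g}_2$-module, and that no two of these summands admit a non-split extension in $\mathcal{O}^{\mathfrak{p}_{\{\alpha_1\}}}$. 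Hence the equality of classes in the Grothendieck group of $\mathcal{O}^{\mathfrak{p}_{\{\alpha_1\}}}$ supplied by Proposition~4.7 lifts to a genuine $\mathfrak{g}_2$-module direct-sum decomposition, which is the assertion of the theorem.

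I do not foresee a genuine obstacle; the argument is formally identical to the proofs of Theorems~5.7 and~5.10. The only delicate point is the congruence $a+2b-c\equiv 2a+b+c\pmod{\mathbb{Z}}$, where one must use $2c\in\mathbb{Z}$ rather than merely $c\in\tfrac12\mathbb{Z}$, and the minor care needed to be certain that no condition of Lemma~5.11, nor of the simplicity list, has been silently dropped when the hypotheses were compressed into the four displayed in the statement.
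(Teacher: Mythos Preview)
Your proof is correct and follows the same route as the paper: invoke Proposition~4.7 together with Lemma~5.11, after checking that the four displayed hypotheses imply both the seven simplicity conditions listed before Lemma~5.11 and the five non-integrality conditions in its statement. The paper compresses this verification into the single phrase ``Rearrange all the conditions, and we obtain'' preceding the theorem and then writes only ``By Proposition 4.7 and Lemma 5.11, the conclusion is proved immediately''; you have written out that bookkeeping explicitly, including the congruence $a+2b-c\equiv 2a+b+c\pmod{\mathbb{Z}}$, which is the only step requiring any care.
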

\begin{proof}
By Proposition 4.7 and Lemma 5.11, the conclusion is proved immediately.
\end{proof}

\section{Branching Formulas for $(\mathfrak{g}_2,sl(3,\mathbb{C}))$}

We know that $\mathfrak{g}_2$ has four standard parabolic subalgebras, which are corresponding to $\phi$, $\{\alpha_1\}$, $\{\alpha_2\}$, and $\Delta(\mathfrak{g}_2)$.
\begin{proposition}
All of the four standard parabolic subalgebras of $\mathfrak{g}_2$ are $sl(3,\mathbb{C})$-compatible.
\end{proposition}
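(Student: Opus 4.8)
The plan is to follow the strategy of Proposition 3.7, but the situation is far simpler than there, because $sl(3,\mathbb{C})$ contains a full Cartan subalgebra of $\mathfrak{g}_2$, so no standard parabolic is excluded. First I would recall the embedding: $sl(3,\mathbb{C})$ sits in $\mathfrak{g}_2$ as the subalgebra generated by $\mathfrak{h}_{\mathfrak{g}_2}$ together with the long root spaces, the long roots $\{\pm\alpha_2,\pm(3\alpha_1+\alpha_2),\pm(3\alpha_1+2\alpha_2)\}$ forming a rank-two root subsystem of type $A_2$ (with simple system $\{\alpha_2,\,3\alpha_1+\alpha_2\}$ and highest root $3\alpha_1+2\alpha_2$). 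Since $\alpha_2$ and $3\alpha_1+\alpha_2$ are linearly independent, so are the coroots $H_{\alpha_2}$ and $H_{3\alpha_1+\alpha_2}$, and hence they span the two-dimensional space $\mathfrak{h}_{\mathfrak{g}_2}$; therefore $\mathfrak{h}_{\mathfrak{g}_2}\subseteq sl(3,\mathbb{C})$. In particular, for every standard parabolic subalgebra $\mathfrak{p}_\Pi$ of $\mathfrak{g}_2$ the Cartan $\mathfrak{h}_{\mathfrak{g}_2}$ is a maximal toral subalgebra of $\mathfrak{p}':=\mathfrak{p}_\Pi\cap sl(3,\mathbb{C})$.

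By Lemma 3.6, applied with $\mathfrak{g}=\mathfrak{g}_2$ and $\mathfrak{g}'=sl(3,\mathbb{C})$, it then suffices to exhibit, for each $\Pi\subseteq\Delta(\mathfrak{g}_2)$, a hyperbolic element $H\in\mathfrak{h}_{\mathfrak{g}_2}$ with $\mathfrak{p}_\Pi=\mathfrak{p}(H)$. The key observations are: the assignment $H\mapsto(\alpha_1(H),\alpha_2(H))$ is a linear isomorphism $\mathfrak{h}_{\mathfrak{g}_2}\to\mathbb{C}^2$ (since $\{\alpha_1,\alpha_2\}$ is a basis of $\mathfrak{h}_{\mathfrak{g}_2}^*$); an element $H\in\mathfrak{h}_{\mathfrak{g}_2}$ is hyperbolic precisely when $\alpha_1(H),\alpha_2(H)\in\mathbb{R}$, because the eigenvalues of $\mathrm{ad}(H)$ on root spaces are the numbers $\beta(H)$ for $\beta\in\Phi(\mathfrak{g}_2)$, each an integral combination of $\alpha_1(H)$ and $\alpha_2(H)$; and when moreover $\alpha_1(H)\geq0$ and $\alpha_2(H)\geq0$, one has $\mathfrak{u}_+(H)=\sum_{\beta(H)>0}\mathfrak{g}_\beta$ a sum of positive root spaces, so $\mathfrak{p}(H)$ is exactly the standard parabolic $\mathfrak{p}_\Pi$ with $\Pi=\{\alpha_i\in\Delta(\mathfrak{g}_2):\alpha_i(H)=0\}$.

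Granting this, the proof finishes by choosing the four elements $H\in\mathfrak{h}_{\mathfrak{g}_2}$ with $(\alpha_1(H),\alpha_2(H))$ equal to $(1,1)$, $(0,1)$, $(1,0)$ and $(0,0)$; these are hyperbolic, lie in $\mathfrak{h}_{\mathfrak{g}_2}\subseteq sl(3,\mathbb{C})$, and yield respectively $\mathfrak{b}_{\mathfrak{g}_2}$, $\mathfrak{p}_{\{\alpha_1\}}$, $\mathfrak{p}_{\{\alpha_2\}}$ and $\mathfrak{p}_{\Delta(\mathfrak{g}_2)}=\mathfrak{g}_2$. If one prefers explicit coordinates in the basis $\{H_{\alpha_1},H_{\alpha_2}\}$, one simply reads them off from the Cartan matrix of $\mathfrak{g}_2$, exactly as in the proof of Proposition 3.7; this is the only routine computation involved. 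I do not expect a genuine obstacle: the one point requiring care is the structural input that the long-root $A_2$ exhausts the Cartan of $\mathfrak{g}_2$, i.e.\ that $sl(3,\mathbb{C})$ and $\mathfrak{g}_2$ have equal rank, after which compatibility of all four standard parabolics is immediate — in sharp contrast with Proposition 3.7, where compatibility forced $\varepsilon_1-\varepsilon_2$ and $\varepsilon_3$ to enter $\Pi$ together and ruled out four of the eight standard parabolics.
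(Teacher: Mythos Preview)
Your proposal is correct and follows essentially the same approach as the paper: the paper's proof is the single line ``Because $\mathfrak{h}_{sl(3,\mathbb{C})}=\mathfrak{h}_{\mathfrak{g}_2}$, the conclusion is obvious,'' which is exactly the equal-rank observation you identify as the key point. You have simply unpacked this by explaining why the long-root $A_2$ contains the full Cartan and by exhibiting explicit hyperbolic elements for each of the four parabolics, whereas the paper leaves these routine verifications to the reader.
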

\begin{proof}
Because $\mathfrak{h}_{sl(3,\mathbb{C})}=\mathfrak{h}_{\mathfrak{g}_2}$, the conclusion is obvious.
\end{proof}
We shall not give details of computation for the three non-trivial cases because the computation is similar but much easier than that for $(so(7,\mathbb{C}),\mathfrak{g}_2)$. We only state the final results for them.

We give below the Chevalley basis of $sl(3,\mathbb{C})$ in terms of that of $\mathfrak{g}_2$.
\begin{eqnarray*}
\begin{array}{rclcrcl}
X_{\eta_1-\eta_2}&=&X_{\alpha_2}, & &  X_{\eta_2-\eta_1}&=&X_{-\alpha_2},\\
X_{\eta_2-\eta_3}&=&X_{3\alpha_1+\alpha_2}, & & X_{\eta_3-\eta_2}&=&X_{-3\alpha_1-\alpha_2},\\
X_{\eta_1-\eta_3}&=&X_{3\alpha_1+2\alpha_2}, & & X_{\eta_3-\eta_1}&=&X_{-3\alpha_1-2\alpha_2},\\
H_{\eta_1-\eta_2}&=&H_{\alpha_2},\\
H_{\eta_2-\eta_3}&=&H_{3\alpha_1+\alpha_2}.
\end{array}
\end{eqnarray*}
A Cartan subalgebra $\mathfrak{h}_{sl(3,\mathbb{C})}$ of $sl(3,\mathbb{C})$ is complex linearly spanned by $\{H_{\eta_1-\eta_2},\\H_{\eta_2-\eta_3}\}$. Fix simple roots $\Delta(sl(3,\mathbb{C}))=\{\eta_1-\eta_2,\eta_2-\eta_3\}$ of $sl(3,\mathbb{C})$. Because $\eta_1+\eta_2+\eta_3=0$ on $\mathfrak{h}_{sl(3,\mathbb{C})}^*$, every element $\delta\in\mathfrak{h}_{sl(3,\mathbb{C})}^*$ can be uniquely written as $x\eta_1+y\eta_2$ for some $x,y\in\mathbb{C}$.

Moreover, it is not hard to compute that $\mathfrak{b}_{\mathfrak{\mathfrak{g}_2}}\cap sl(3,\mathbb{C})=\mathfrak{p}_{\{\alpha_1\}}\cap sl(3,\mathbb{C})=\mathfrak{b}_{sl(3,\mathbb{C})}$ and $\mathfrak{p}_{\{\alpha_2\}}\cap sl(3,\mathbb{C})=\mathfrak{p}_{\{\eta_1-\eta_2\}}$.

Let $S_{\mathfrak{b}_{\mathfrak{g}_2}}=\{\nu\in\mathfrak{h}_{\mathfrak{g}_2}^*\mid\nu(H_{\alpha_1})\notin\mathbb{N},\nu(H_{\alpha_1+\alpha_2})+3\notin\mathbb{N},
\nu(H_{2\alpha_1+\alpha_2})+4\notin\mathbb{N},\nu(H_{\alpha_2})\notin\mathbb{Z},\nu(H_{3\alpha_1+\alpha_2})\notin\mathbb{Z},
\nu(H_{3\alpha_1+2\alpha_2})\notin\mathbb{Z}\}$.
\begin{theorem}
Let $\lambda\in\mathfrak{h}_{\mathfrak{g}_2}^*$ and $\delta\in\mathfrak{h}_{sl(3,\mathbb{C})}^*$. Denote $\sigma=\mathrm{Res}_{sl(3,\mathbb{C})}^{\mathfrak{g}_2}\lambda+\delta$ and $\mu=\mathrm{Res}_{sl(3,\mathbb{C})}^{\mathfrak{g}_2}\lambda-\delta$.
\begin{enumerate}
\item If $\lambda\in S_{\mathfrak{b}_{\mathfrak{g}_2}}$, then
    \begin{center}
    $\mathrm{Res}_{sl(3,\mathbb{C})}^{\mathfrak{g}_2}M_{\mathfrak{b}_{\mathfrak{g}_2}}^{\mathfrak{g}_2}(\lambda)=
    \displaystyle{\bigoplus_{\mbox{\tiny$\begin{array}{c}\mu(H_{\eta_1-\eta_3})\in\mathbb{N}\\ \mu(H_{\eta_2-\eta_3})\in\mathbb{N}\end{array}$}}}(1+\min\{\mu(H_{\eta_1-\eta_3}),\mu(H_{\eta_2-\eta_3})\})
    M_{\mathfrak{b}_{sl(3,\mathbb{C})}}^{sl(3,\mathbb{C})}(\delta)$
    \end{center}
    is a decomposition of simple $sl(3,\mathbb{C})$-modules.
\item If $\lambda(H_{\alpha_1})\in\mathbb{N}$, $\lambda(H_{2\alpha_1+\alpha_2})+4\notin\mathbb{N}$ and $\lambda(H_{3\alpha_1+2\alpha_2})\notin\mathbb{Z}$, then
    \begin{center}
    $\mathrm{Res}_{sl(3,\mathbb{C})}^{\mathfrak{g}_2}M_{\mathfrak{p}_{\{\alpha_1\}}}^{\mathfrak{g}_2}(\lambda)
    =\displaystyle{\bigoplus_{\mbox{\tiny$\begin{array}{c}\mu(H_{\eta_1-\eta_3})\in\mathbb{N}\\ \mu(H_{\eta_2-\eta_3})\in\mathbb{N}\\ \delta(H_{\eta_1-\eta_2})\leq\lambda(H_{3\alpha_1+2\alpha_2})\end{array}$}}}(1+X-Y)M_{\mathfrak{b}_{sl(3,\mathbb{C})}}^{sl(3,\mathbb{C})}(\delta)$
    \end{center}
    is a decomposition of simple $sl(3,\mathbb{C})$-modules, where
    \begin{center}
    $X=\min\{\mu(H_{\eta_1-\eta_3}),\mu(H_{\eta_2-\eta_3})\}$,\\
    $Y=\max\{\mu(H_{\eta_2-\eta_3})-\lambda(H_{\alpha_1}),0\}$.
    \end{center}
\item If $\lambda(H_{\alpha_2})\in\mathbb{N}$ and $\lambda(H_{2\alpha_1+\alpha_2})\notin\mathbb{Z}$, then
    \begin{center}
    $\mathrm{Res}_{sl(3,\mathbb{C})}^{\mathfrak{g}_2}M_{\mathfrak{p}_{\{\alpha_2\}}}^{\mathfrak{g}_2}(\lambda)
    =\displaystyle{\bigoplus_{\mbox{\tiny$\begin{array}{c}\delta(H_{\eta_1-\eta_2})\in\mathbb{N}\\ \mu(H_{\eta_1-\eta_3})\in\mathbb{N}\\ \mu(H_{\eta_2-\eta_3})\in\mathbb{N}\end{array}$}}}(1+\frac{Z-|\mu(H_{\eta_1-\eta_2})|}{2})M_{\mathfrak{p}_{\{\eta_1-\eta_2\}}}^{sl(3,\mathbb{C})}(\delta)$
    \end{center}
    is a decomposition of simple $sl(3,\mathbb{C})$-modules, where $Z=\min\{\mu(H_{\eta_1-\eta_3}+H_{\eta_2-\eta_3}),\sigma(H_{\eta_1-\eta_2})\}$.
\end{enumerate}
\end{theorem}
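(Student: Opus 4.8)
We outline the proof; it proceeds in exact parallel with Sections~4 and~5, the computations being strictly simpler because $sl(3,\mathbb{C})\subseteq\mathfrak{g}_2$ is small and $W_{sl(3,\mathbb{C})}$ has only six elements. The first task is to obtain the three displayed identities in the relevant Grothendieck groups, and the second is to promote them to genuine $sl(3,\mathbb{C})$-module decompositions under the stated hypotheses.

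\textbf{Step 1 (Grothendieck group level).} For each of the three $sl(3,\mathbb{C})$-compatible parabolic subalgebras $\mathfrak{b}_{\mathfrak{g}_2}$, $\mathfrak{p}_{\{\alpha_1\}}$, $\mathfrak{p}_{\{\alpha_2\}}$, I would invoke Theorem~4.1. The intersection $\mathfrak{p}'=\mathfrak{p}\cap sl(3,\mathbb{C})$ is as recorded above ($\mathfrak{b}_{sl(3,\mathbb{C})}$ in the first two cases, $\mathfrak{p}_{\{\eta_1-\eta_2\}}$ in the third), the point being that $\alpha_1$ is a short root and hence not a root of $sl(3,\mathbb{C})$, while $\alpha_2=\eta_1-\eta_2$ is. Using the Chevalley basis of $sl(3,\mathbb{C})$ in $\mathfrak{g}_2$ given above, the complement $\mathfrak{u}_-/(\mathfrak{u}_-\cap sl(3,\mathbb{C}))$ is spanned in each case by the short negative root vectors surviving the projection: for $\mathfrak{b}_{\mathfrak{g}_2}$ it is the span of the weight vectors of weights $-\alpha_1$, $-\alpha_1-\alpha_2$, $-2\alpha_1-\alpha_2$ (three weight lines for $\mathfrak{l}'=\mathfrak{h}_{\mathfrak{g}_2}$); for $\mathfrak{p}_{\{\alpha_1\}}$ it is the two weight lines of weights $-\alpha_1-\alpha_2$, $-2\alpha_1-\alpha_2$ (and it is the restriction of $F_\lambda$ from the $sl(2,\mathbb{C})$ sitting in $\mathfrak{l}$ that contributes the $\lambda(H_{\alpha_1})$-dependence); and for $\mathfrak{p}_{\{\alpha_2\}}$ it is $F(1)\oplus F(0)$ as an $\mathfrak{l}'\cong gl(2,\mathbb{C})$-module, exactly as in Section~4.3. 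Carrying out Steps~1--3 of Section~4.1 --- restrict $F_\lambda$ to $\mathfrak{l}'$, pass to $S(\mathfrak{u}_-/(\mathfrak{u}_-\cap sl(3,\mathbb{C})))$, tensor the two, and split into simple $\mathfrak{l}'$-modules using $S^iF(1)\cong F(i)$ and the Littlewood--Richardson rule --- produces $m(\delta;\lambda)$ as the stated count, and feeding it into Theorem~4.1 gives the three direct-sum identities in the Grothendieck group of $\mathcal{O}^{\mathfrak{p}'}$. This reproduces the arguments of Lemmas~4.2, 4.4, 4.6 with shorter bookkeeping, so the min/max expressions come out immediately in coroot form.

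\textbf{Step 2 (lifting to a module decomposition).} Following Section~5, I would first impose on $\lambda$ the conditions from Proposition~5.1 making $M_{\mathfrak{p}}^{\mathfrak{g}_2}(\lambda)$ simple, i.e. $\frac{2(\lambda+\rho(\mathfrak{g}_2),\beta)}{(\beta,\beta)}\notin\mathbb{Z}^+$ for all $\beta\in\Phi^+(\mathfrak{g}_2)-\mathbb{Z}\Pi$; evaluating against $\rho(\mathfrak{g}_2)$ these are precisely the conditions producing the shifts $+3$, $+4$ in the definition of $S_{\mathfrak{b}_{\mathfrak{g}_2}}$ and in the hypotheses of parts (2), (3). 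By Lemma~3.2 and Proposition~3.5, simplicity forces every sub-quotient of $\mathrm{Res}^{\mathfrak{g}_2}_{sl(3,\mathbb{C})}M_{\mathfrak{p}}^{\mathfrak{g}_2}(\lambda)$ into $\mathcal{O}^{\mathfrak{p}'}$. Next I would add conditions ensuring that each $\delta$ occurring in Step~1 gives a simple $M_{\mathfrak{p}'}^{sl(3,\mathbb{C})}(\delta)$ (Proposition~5.1 applied inside $sl(3,\mathbb{C})$, with the congruence on $\delta$ rewritten in $\lambda$ via $\delta\equiv\mathrm{Res}^{\mathfrak{g}_2}_{sl(3,\mathbb{C})}\lambda$), and that distinct occurring $\delta$ lie in distinct linkage classes. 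For parts (1) and (2), where $\mathfrak{p}'=\mathfrak{b}_{sl(3,\mathbb{C})}$ and the summands are standard Verma modules, this is the anti-dominant-representative argument of Lemma~5.6. For part (3), where the summands are genuine generalized Verma modules and no parameter can be anti-dominant (cf.\ Remark~5.9), I would instead tabulate $\omega(\delta+\rho(sl(3,\mathbb{C})))-\rho(sl(3,\mathbb{C}))$ for the six $\omega\in W_{sl(3,\mathbb{C})}$ and check, as in Lemma~5.8, that the genericity hypotheses on $\lambda$ push every $\omega$ outside the stabilizer of the $\mathfrak{l}'$-dominant chamber either out of the support of the decomposition or into a different linkage class. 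Then Remark~5.3 with Propositions~5.4 and~5.5 gives that no two summands admit a non-split extension in $\mathcal{O}^{\mathfrak{p}'}$, and combined with discrete decomposability the identities of Step~1 lift to the asserted $sl(3,\mathbb{C})$-module decompositions.

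\textbf{Expected main obstacle.} The only real work is the part (3) linkage analysis together with the subsequent pruning of hypotheses so that the final statement reads with the minimal lists ``$\lambda(H_{\alpha_2})\in\mathbb{N}$, $\lambda(H_{2\alpha_1+\alpha_2})\notin\mathbb{Z}$'' for part (3), the three-condition list for part (2), and the definition of $S_{\mathfrak{b}_{\mathfrak{g}_2}}$ for part (1): one must verify that most of the a priori necessary inequalities and congruences are redundant, which is the analogue of Remark~4.8, Remark~5.9, and the discussions preceding Theorems~5.7, 5.10, 5.12. Since $|W_{sl(3,\mathbb{C})}|=6$, this tabulation is short, and the Clebsch--Gordan bookkeeping and the appeal to Theorem~4.1 are routine and strictly easier than the $so(7,\mathbb{C})$ computations already carried out, so I expect no genuine difficulty beyond careful bookkeeping.
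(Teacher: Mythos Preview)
Your proposal is correct and follows exactly the approach the paper intends: the paper gives no detailed proof of this theorem at all, merely stating that ``the computation is similar but much easier than that for $(so(7,\mathbb{C}),\mathfrak{g}_2)$,'' and your two-step outline (Theorem~4.1 for the Grothendieck-group identity, then the simplicity/linkage arguments of Section~5 to lift it) is precisely that parallel. Your identification of $\mathfrak{p}'$, of $\mathfrak{u}_-/(\mathfrak{u}_-\cap sl(3,\mathbb{C}))$ via the short-root spaces, and of which cases use the anti-dominant trick versus the explicit Weyl-group tabulation all match the structure of Sections~4 and~5.
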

\begin{remark}
In each decomposition of Theorem 5.7, 5.9, 5.12 and 6.2, all the highest weight vectors of direct summands are $\bar{\mathfrak{b}}$-singular vectors defined in $[\textbf{MS1}]$. In that paper, the authors listed the $\bar{\mathfrak{b}}$-singular vectors of $V_\lambda(\mathfrak{l})$ which is our $F_\lambda$, for $\lambda$ ``small''. In fact, one can check that those vectors are contained in our results. On the other hand, a method called F-method is introduced in $[\textbf{MS2}]$. In that paper, the authors used F-method to find out the space of $\tilde{L}'$-singular vectors (Definition 3.1 [\textbf{MS2}]). Although the space of $\tilde{L}'$-singular vectors contain more elements than the set of $\bar{\mathfrak{b}}$-singular vectors, i.e., some of them may not be useful to branching formulas, the F-method offers a new tool to study branching laws, at least shrinks the range of highest weight vectors.
\end{remark}

\end{document}